\pdfoutput=1
\documentclass[11pt,letterpaper]{article}
\usepackage[letterpaper,margin=1in]{geometry}
\usepackage[utf8]{inputenc}
\usepackage[T1]{fontenc}
\usepackage[dvipsnames]{xcolor}
\usepackage{amsmath,amssymb,amsthm,mathtools,mathrsfs}
\usepackage{enumitem,graphicx,float}
\usepackage{tikz}
\usetikzlibrary{arrows.meta,calc,positioning,decorations.pathreplacing}
\usepackage{microtype}
\microtypesetup{expansion=false}
\usepackage{natbib}
\setcitestyle{authoryear,open={[},close={]},citesep={;},aysep={,},yysep={,}}
\usepackage{mathptmx}
\usepackage{courier}
\usepackage{aliascnt}
\usepackage[unicode=true,colorlinks=true,breaklinks=true]{hyperref}
\definecolor{selectedpurple}{HTML}{79538A}
\definecolor{curvepurple}{HTML}{6F3F8F}

\hypersetup{linkcolor=selectedpurple!85!black,citecolor=YellowOrange!85!black,
  urlcolor=Aquamarine!85!black,
  pdftitle={Silver Rate Is (Almost) Optimal for Gradient Descent Acceleration},pdfauthor={Yuhan Ye and Kaizhao Liu}}
\newtheorem{theorem}{Theorem}[section]
\newaliascnt{lemma}{theorem}
\newtheorem{lemma}[lemma]{Lemma}
\aliascntresetthe{lemma}
\newaliascnt{proposition}{theorem}

\aliascntresetthe{proposition}
\newaliascnt{corollary}{theorem}

\aliascntresetthe{corollary}
\newaliascnt{fact}{theorem}
\newtheorem{fact}[fact]{Fact}
\aliascntresetthe{fact}
\theoremstyle{definition}
\newaliascnt{definition}{theorem}

\aliascntresetthe{definition}
\newaliascnt{remark}{theorem}
\newtheorem{remark}[remark]{Remark}
\aliascntresetthe{remark}
\usepackage[nameinlink,capitalize]{cleveref}
\crefname{fact}{Fact}{Facts}
\Crefname{fact}{Fact}{Facts}
\crefname{appendix}{Appendix}{Appendices}
\Crefname{appendix}{Appendix}{Appendices}
\allowdisplaybreaks
\newcommand{\R}{\mathbb R}
\newcommand{\N}{\mathbb N}
\newcommand{\psil}{p_{\mathrm{sil}}}
\newcommand{\pany}{p_{\mathrm{any}}}

\DeclareMathOperator*{\argminop}{arg\,min}

\title{Silver Rate Is (Almost) Optimal for Gradient Descent}
\author{Yuhan Ye\footnotemark[1]\\MIT\\\texttt{yyh03@mit.edu}
  \and Kaizhao Liu\thanks{Authors are listed in random order.}\\MIT\\\texttt{mrzt@mit.edu}}
\date{\today}

\begin{document}
\maketitle
\begin{abstract}
We study how far gradient descent (GD) can be accelerated by predetermined stepsizes in smooth convex optimization.
Writing $\psil=\log_2(1+\sqrt2)$, we prove an $\Omega\!\left(n^{-\psil-O(\sqrt{\log\log n/\log n})}\right)$ non-anytime lower bound.
In the anytime setting, every infinite schedule has infinitely many horizons with error $\Omega\!\left(n^{-\frac{2\psil}{1+\psil}-O(\sqrt{\log\log n/\log n})}\right)$.
Together with the silver-schedule upper bound~\citep{AltschulerParrilo2025} and the anytime upper bound~\citep{ZhangLeeDuChen2025}, our results determine the optimal polynomial convergence exponents in both settings.
\end{abstract}

\clearpage
\tableofcontents
\clearpage

\section{Introduction}
\label{sec:introduction}

We consider the unconstrained minimization of a convex smooth function $f:\R^d\to\R$.
One of the most fundamental methods for this problem is \textbf{gradient descent} (GD), which dates back to \citet{Cauchy1847}.
Given a stepsize schedule $H=(h_1,\ldots,h_n)\in[0,\infty)^n$ fixed before the optimization begins, GD iterates
\[
  x_k=x_{k-1}-h_k\nabla f(x_{k-1}),\qquad 1\le k\le n.
\]
In this paper, we study how far this basic method can be accelerated using only predetermined stepsizes.

Let $\mathcal F_L(\R^d)$ be the class of convex $L$-smooth functions on $\R^d$ with a nonempty set of minimizers.
We measure the convergence rate of a schedule $H$ by
\begin{equation}
  \mathcal R_n(H):=
  \sup_{d\in\N}\;
  \sup_{f\in\mathcal F_L(\R^d)}\;
  \sup_{x_*\in\argminop f}\;
  \sup_{x_0\in\R^d\setminus\{x_*\}}
  \frac{f(x_n)-f(x_*)}{\frac{L}{2}\lVert x_0-x_*\rVert^2}.
  \label{eq:worst-case-error}
\end{equation}
In the non-anytime (finite-horizon) setting, the schedule may be chosen separately for each fixed horizon $n$.
In the anytime setting, one infinite schedule $h=(h_k)_{k\ge1}$ is used for every horizon, with $H_n=(h_1,\ldots,h_n)$.

It is a standard textbook result that GD with the constant stepsize $1/L$ has an $O(n^{-1})$ convergence rate~\citep{LevitinPolyak1966,Nesterov2004}.
Classical acceleration methods alter the GD iteration by adding momentum or auxiliary sequences, as in the heavy-ball method of \citet{Polyak1964} and the accelerated method of \citet{Nesterov1983}.
Within the broader class of first-order methods, Nesterov's method attains the optimal $O(n^{-2})$ rate for smooth convex objectives, matching the $\Omega(n^{-2})$ oracle lower bound of \citet{NemirovskyYudin1983}.

Since GD with predetermined stepsizes is a special class of first-order method, the classical $\Omega(n^{-2})$ oracle lower bound continues to apply.
For many years, however, the standard $O(n^{-1})$ rate remained the best general upper bound known.
This leads to the fundamental question of whether a faster rate is possible.
In recent years, a growing body of work surprisingly shows that GD itself can be accelerated beyond the classical $O(n^{-1})$ rate by using carefully designed stepsize schedules that use occasional long steps and recursive structure.
The current best known non-anytime upper bound is $O(n^{-\psil})$, achieved by the silver schedule introduced by \citet{AltschulerParrilo2025}, where $\psil:=\log_2(1+\sqrt2)\approx1.2716$.
In the same work, they conjectured that $\psil$ is the optimal polynomial convergence exponent.
For a broader introduction to stepsize-based acceleration, see the recent survey~\citep{AltschulerParrilo2026Survey}.
An anytime construction based on silver-schedule blocks achieves an $O(n^{-\pany})$ rate at every stopping time, where $\pany:=2\psil/(1+\psil)\approx1.1195$~\citep{ZhangLeeDuChen2025}.
This left open whether the generic $\Omega(n^{-2})$ lower bound could be strengthened  toward these upper bounds.

Recently, a sequence of results has brought the lower bounds closer to these rates.
\citet{MaChen2026} proved an $\Omega(n^{-1.932})$ non-anytime lower bound, and a subsequent blog post by \citet{Tsai2026} sharpened it to $\Omega(n^{-\sqrt3})$.
For the anytime setting, \citet{TsaiFatkhullinZhangHe2026} proved an $\Omega(n^{-4/3})$ lower bound.\footnote{Throughout, we use an $\Omega(n^{-p})$ anytime lower bound as shorthand for the statement that no single infinite stepsize schedule achieves an $o(n^{-p})$ rate.}
In our previous work~\citep{YeLiu2026}, we proved an $\Omega(n^{-1.6342})$ non-anytime lower bound and an $\Omega(n^{-1.2408})$ anytime lower bound for arbitrary real-valued stepsizes. To our knowledge, these remain the strongest known lower bounds when negative stepsizes are allowed.
For nonnegative schedules, \citet{JungChoYun2026} further improved the non-anytime and anytime lower bounds to $\Omega(n^{-1.4500})$ and $\Omega(n^{-1.1837})$, respectively.
\subsection{Main Results}

For nonnegative schedules, we determine the optimal polynomial convergence exponents in both settings.

\begin{theorem}
\label{thm:nonanytime}
There is an absolute constant $C>0$ such that, for every sufficiently large $n$ and every schedule $H\in[0,\infty)^n$,
\begin{equation}
  \mathcal R_n(H)\ge
  n^{-\left(\psil+C\sqrt{\frac{\log\log n}{\log n}}\right)}.
  \label{eq:main-nonanytime}
\end{equation}
\end{theorem}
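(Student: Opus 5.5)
We will prove \cref{thm:nonanytime} by exhibiting, for an arbitrary schedule $H\in[0,\infty)^n$, explicit hard instances from a small parametric family, and by tracking a potential that records what the schedule can gain on each instance. Two standard families will be used.
\begin{itemize}
\item One-dimensional Huber-type functions $f_\delta(x)=\frac L2 x^2$ for $|x|\le\delta$, continued linearly outside. These punish long steps: a step with $h_k>2/L$ taken while the iterate is in the quadratic region overshoots and amplifies the error.
\item Quadratics $\frac{\lambda}{2}x^2$ with $\lambda\in(0,L]$. These punish short steps, since the error after $n$ steps is $\lambda x_0^2\prod_k(1-h_k\lambda)^2/L$.
\end{itemize}
For the quadratic family, a small value forces the multiset of normalized stepsizes $Lh_k$ to contain enough mass at every scale. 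For the Huber family, the largest step $h_{\max}$ forces error $\gtrsim 1/(Lh_{\max})$, up to the structure of the preceding steps.

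The plan is to make this quantitative through a recursive scale decomposition. First, normalize $L=1$, and for each dyadic scale $2^j$ let $N_j$ be the number of steps with $h_k\in[2^j,2^{j+1})$. Second, use quadratic test functions at curvature $\lambda\approx 2^{-j}$ to show that unless
\[
\sum_{i\le j}N_i\,2^{i}\gtrsim 2^{j}\log(1/\varepsilon),
\]
the error exceeds $\varepsilon$. Third, use Huber functions with threshold tuned to scale $j$, combined with a convexity-interpolation argument restricted to a window of iterates around each long step, to show that a step at scale $2^j$ is only harmless if it is preceded by enough total step length at smaller scales. Quantitatively, the total length needed should satisfy a recursion of the form
\[
T(2^{j+1})\ge(1+\sqrt2)\,T(2^{j})
\]
in the sense of counts, i.e.\ step counts multiply by $1+\sqrt2$ when the largest step doubles. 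This is the silver recursion. Unrolling it gives
\[
n\gtrsim (1+\sqrt2)^{\log_2 h_{\max}}=h_{\max}^{1/\psil},
\]
and combining with the Huber bound $1/h_{\max}$ gives the rate $n^{-\psil}$. Each level of the recursion loses a factor $1+\eta$ for a slack $\eta$. Over $\log n$ levels this costs $n^{O(\eta)}$, while the approximation errors at each level cost about $\log\log n/(\eta\log n)$ in the exponent. Choosing $\eta\approx\sqrt{\log\log n/\log n}$ balances the two and produces the stated correction term.

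The main obstacle is the third step: proving the local recursion, namely that any arrangement of nonnegative steps below a long step at scale $2^{j+1}$ "absorbs" the long step only if it has about $(1+\sqrt2)$ times the mass needed at scale $2^j$. I would first try to construct, for each block, a piecewise-linear-quadratic convex function in two or three dimensions whose interpolation constraints, evaluated along the block, reduce to a scalar inequality. I would then verify that the extremal case is exactly the silver block $(\pi_j,\,1+\rho^{j-1},\,\pi_j)$, where $\rho=1+\sqrt2$ and $\pi_j$ denotes the silver schedule of length $2^j-1$. The key is to choose the function adaptively, with a quadratic piece on the iterates before the long step and a linear piece after it, so that the lower bound for the whole schedule follows by induction on blocks.
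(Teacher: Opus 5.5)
Your proposal is a heuristic outline rather than a proof. The step you flag as ``the main obstacle'' is the local silver recursion: a long step is harmless only if it is preceded by enough smaller steps. That step is the whole content of the theorem, and nothing in the proposal establishes it or specifies the hard function that would.

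Worse, the recursion as written points the wrong way. If step counts multiplied by $1+\sqrt2$ whenever the largest step doubles, unrolling would give $n\gtrsim(1+\sqrt2)^{\log_2 h_{\max}}=h_{\max}^{\psil}$, not $h_{\max}^{1/\psil}$. The silver schedule itself refutes this. Passing from $\pi_j$ to $(\pi_j,1+\rho^{j-1},\pi_j)$ roughly doubles the number of steps while multiplying the largest step by about $\rho$. So the count grows only by about $2^{1/\psil}\approx1.72$ per doubling of $h_{\max}$. The correct heuristic is that counts double when $h_{\max}$ grows by $\rho$.

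Your test families also cannot deliver this. Quadratics constrain the schedule only through $\max_{\lambda\in(0,1]}\lambda\prod_k(1-h_k\lambda)^2$, which Chebyshev-type schedules make $O(n^{-2})$. A one-dimensional Huber function yields only the total-stepsize bound $1/(4(1+\sum_t h_t))$, or penalizes one long step relative to the steps before it. Neither can make many long steps interact multiplicatively. Finally, the bound must hold for every schedule, and a schedule need not have any dyadic block structure around its long steps. So ``induction on blocks'' needs a mechanism for deciding which steps to treat as long.

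The paper supplies exactly these missing pieces. For an arbitrary schedule and an arbitrary checkpoint set $T$, it builds a chained instance on $k+1$ coordinates, $F=\frac12\sum_i\Phi_i+\frac12H_\delta(x^{(k+1)}-\ell_{k+1})$. Each two-coordinate component is the Moreau envelope of a support function. Its gradient rotates along a circular arc from $(c,0)$ to $(\epsilon,-1)$ during the intervening updates, and zero regions keep earlier and later components inactive. This gives \cref{thm:transfer}: each selected step transfers height with ratio $b_i/(c(a+s_i)+\epsilon b_i)$, with $c$ arbitrarily close to $1$. A fixed-direction Huber component only achieves coefficient $2$, and hence the exponent $\log_2(1+\sqrt3)$.

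The schedule-dependent part is then handled by minimizing over checkpoint sets, not by scale counting. The paper proves $U_n(\lambda)^\nu\le a^\nu n+\lambda^\nu$ using three ingredients:
\begin{itemize}
\item the exact recursive decomposition;
\item log-submodularity together with extremal schedules;
\item a splitting inequality whose correction term $\kappa B^\nu$ cancels the extra terms created by the shifted parameters $a+\epsilon b_i/c$.
\end{itemize}
The silver inequality $z^{\psil}+(1-z)^{\psil}+[z(1-z)]^{\psil}\le1$ enters only as the scalar hypothesis.

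The $\sqrt{\log\log n/\log n}$ loss also has a different source from the one you guess. It is not a per-level $(1+\eta)$ loss. It is the balance between the factor $n^{-\eta}$ and the constant $a=\exp(O(\eta^{-1}\log(1/\eta)))$. That constant is forced by the turning budget $H=[\,\cdot\,]^{R/\gamma}$, with $\gamma=\eta/4$ and $\epsilon=(\eta/16)^p$.
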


\begin{theorem}
\label{thm:anytime}
There is an absolute constant $C>0$ such that every infinite nonnegative schedule $H=(h_k)_{k\ge1}$ satisfies
\begin{equation}
  \mathcal R_n(H_n)\ge
  n^{-\left(\pany+C\sqrt{\frac{\log\log n}{\log n}}\right)}
  \label{eq:main-anytime}
\end{equation}
for infinitely many $n$.
\end{theorem}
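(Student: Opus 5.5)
The plan is to deduce \cref{thm:anytime} from a quantitative, multi-horizon version of the mechanism behind \cref{thm:nonanytime}. I will use two standard families of hard instances. The first is the one-dimensional Huber-type functions. They show that if the stepsize mass $S_n=\sum_{k\le n}h_k$ is small, then $\mathcal R_n(H_n)\gtrsim 1/(1+S_n)$. The second is the family of quadratics and piecewise-linear "long-step punishing" functions. These show that a single very long step $h_k$ forces error of order $h_k$-dependent size at horizon $k$ unless later steps repair it.

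For the anytime setting, the key quantity is the tension between two requirements. To have small error at every horizon $n$, the partial sums must satisfy $S_n\gtrsim n^{\pany}$. On the other hand, the non-anytime analysis shows that a block of length $m$ achieving decay $m^{-p}$ must contain steps of size up to about $m^{p-1}$, whose damage is repaired only at the end of the block. So the first step is to extract from the proof of \cref{thm:nonanytime} a lemma of the following form, with $\varepsilon(n)=C\sqrt{\log\log n/\log n}$. If a prefix $H_n$ has $\mathcal R_m(H_m)\le m^{-q}$ for all $m\in[n^{1-\delta},n]$, then the total mass satisfies $S_n\le n^{\psil+\varepsilon(n)}$. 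In addition, the large steps are constrained, so that any horizon just after a long step of size $h$ incurs error $\gtrsim 1/h$ relative to the accumulated mass.

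The second step is an amortization/potential argument over dyadic horizons. Suppose, for contradiction, that $\mathcal R_n(H_n)\le n^{-(\pany+\varepsilon(n))}$ for all large $n$. Consider horizons $n_j=2^j$. Small error at every horizon requires (via the Huber bound) $S_{n}\gtrsim n^{\pany}$ uniformly. Achieving the error at horizon $n$ beyond what mass alone gives requires silver-type structure, and that structure contributes a long step of size about $n^{\psil-1}\cdot(\text{scale})$. Applying the non-anytime bound to the segment right after such a long step, evaluated at intermediate horizons, gives error at least about $1/(h\,\cdot\,\text{local gain})$.

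Balancing gives the relation. With block length $m$ ending near $n$, the block's mass is $m^{\psil}$, and the error at intermediate times is at least $m^{-1}\cdot n^{0}$ scaled by $(n/m)$. Optimizing over $m=n^{\theta}$ yields the exponent $\min_\theta\max(\psil\theta,\ 1+\theta(\psil-1)\ldots)$. The balance point $\theta=2/(1+\psil)$ gives exactly $2\psil/(1+\psil)=\pany$, which matches the upper bound of \citet{ZhangLeeDuChen2025}. The $\sqrt{\log\log n/\log n}$ losses propagate additively, since only $O(\log n)$ scales are involved, each losing a factor $n^{O(\varepsilon)}$.

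The main obstacle is the first step. \cref{thm:nonanytime} is a statement about the final error only. Here I need a robust, localized version that controls intermediate errors inside a prefix and couples the mass $S_n$ to the size of the largest steps. I would try to obtain it by re-running the proof of \cref{thm:nonanytime} with an explicit potential (mass versus maximal step ratio) at each scale. I would then verify that the potential's recursion gives $\pany$ when the constraint must hold at all horizons rather than one.
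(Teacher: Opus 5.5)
Your proposal has a genuine gap. It is a heuristic outline, and the step you flag as the main obstacle is the step that carries the whole theorem. Worse, the lemma you propose to extract would not suffice even if proved. A bound $S_n\le n^{\psil+\varepsilon(n)}$ on the total mass is consistent with the Huber requirement $S_n\gtrsim n^{\pany}$, since $\pany<\psil$, so it gives no contradiction. What is needed is a quantitative coupling between three quantities: the error $r_n:=\mathcal R_n(H_n)$, the mass $S_n$, and the largest step $M_n:=\max_{t\le n}h_t$.

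The balancing step does not derive $\pany$ either. Both $\psil\theta$ and $1+\theta(\psil-1)$ increase in $\theta$, so they have no balance point. At $\theta=2/(1+\psil)$ the second equals $(3\psil-1)/(1+\psil)\neq\pany$. The exponent is being matched to the upper bound of \citet{ZhangLeeDuChen2025} rather than obtained. The loss bookkeeping is also wrong as stated: $O(\log n)$ scales each losing $n^{O(\varepsilon)}$ would compound to $n^{O(\varepsilon\log n)}$. In a correct argument, the $\sqrt{\log\log n/\log n}$ loss comes from balancing $\eta\log n$, with $\eta=p-\psil$, against $\log a=O(\eta^{-1}\log(1/\eta))$, the blow-up of the construction constants. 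The recursion $U_n(\lambda)^\nu\le a^\nu n+\lambda^\nu$ has no per-scale loss.

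The missing idea is to work only at record times $n$, where $h_n=M_n$. If $h$ is bounded, then $S_n=O(n)$ and $r_n\ge1/(4(1+S_n))$ already suffices. Otherwise there are infinitely many strict record times, and one may assume $M_n\ge B_0$ and $r_n$ small, since the other cases are easier. At such $n$, three inequalities close up.
\begin{enumerate}
\item The empty checkpoint set in \cref{thm:transfer} gives $S_n\gtrsim1/r_n$.
\item The single checkpoint $t_1=n$ gives $r_n\ge\tfrac14\bigl[M_n/(c(a+S_{n-1})+\epsilon M_n)\bigr]^2$, hence $M_n\lesssim S_n\sqrt{r_n}$.
\item \cref{any:lem:record-sum} gives $S_n\lesssim nM_n^{1-1/p}$. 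To prove it, factor $V:=V_{a+\epsilon M_n/c}(H_{n-1})$ at the $k$ prefix steps exceeding a threshold $u$. Use \cref{thm:weighted-recursion} to bound each block factor by $\epsilon\exp\{O((m+1)u^{-1/p})\}$ for a block of length $m$. This is where the $\epsilon b_i$ term is essential: every large step costs a factor of about $\epsilon$. Compare with $V\ge2M_n$, which comes from \cref{thm:transfer} applied to an optimal prefix set plus the final step. This gives $\#\{t<n:h_t>u\}\lesssim nu^{-1/p}/\log(1/\epsilon)$. Integrating over $u\in(0,M_n]$ bounds $S_{n-1}$, and $M_n^{1/p}\lesssim n$ handles the last step.
\end{enumerate}
Eliminating $M_n$ and $S_n$ gives $1\lesssim n\,r_n^{(1+1/p)/2}$, i.e.\ $r_n\gtrsim n^{-2p/(1+p)}$. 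Choosing $p-\psil=\sqrt{\log\log n/\log n}$ separately at each record time then yields the theorem. None of these three relations appears in your plan, in particular not the threshold-counting bound.
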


Together with the corresponding upper bounds~\citep{AltschulerParrilo2025,ZhangLeeDuChen2025}, our results identify $\psil$ and $\pany$ as the optimal polynomial convergence exponents in the non-anytime and anytime settings, respectively.
This confirms the conjecture of \citet{AltschulerParrilo2025} that the silver exponent is optimal.

\begin{figure}[H]
  \centering
  \includegraphics[width=\textwidth]{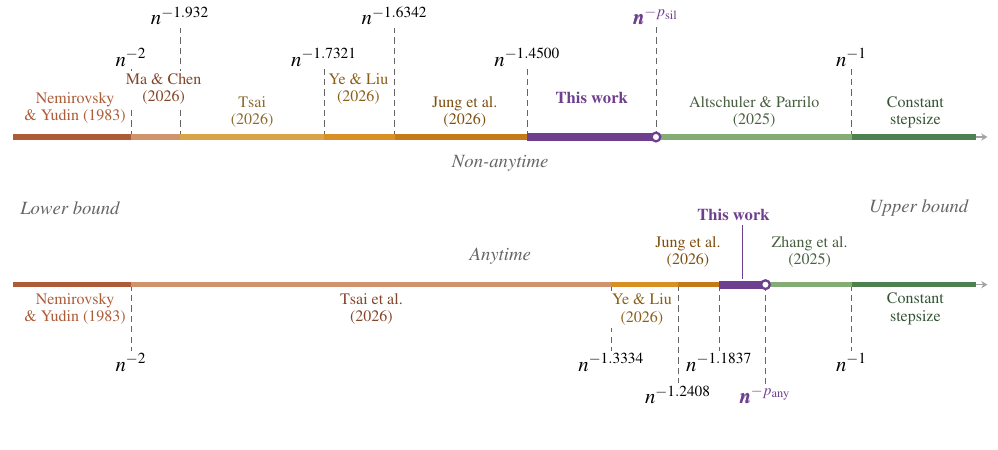}
  \caption{Lower and upper bounds for GD with predetermined stepsizes.}
  \label{fig:progress}
\end{figure}

\subsection{Additional Related Work}
\label{sec:related-work}

As discussed above, accelerated first-order methods have a long history~\citep{Polyak1964,Nesterov1983,Nesterov2004}.
Here we focus on acceleration through time-varying stepsizes, leaving the GD update unchanged.

\paragraph{Accelerating GD via stepsize schedules.}
Stepsize-based acceleration for quadratics goes back to classical Chebyshev schedules~\citep{Young1953}.
Beyond quadratics, early work established improvements over short horizons~\citep[Chapter~8]{Altschuler2018}, followed by numerical optimization of longer schedules~\citep{DasGuptaVanParysRyu2024} and constant-factor improvements via periodic long steps under a bounded-level-set assumption~\citep{Grimmer2024}.
These analyses and searches drew on the performance estimation problem (PEP) framework~\citep{DroriTeboulle2014,TaylorHendrickxGlineur2017}.
A major breakthrough came when \citet{AltschulerParrilo2025} first established the silver rate $O(n^{-\log_2\rho})$, where $\rho=1+\sqrt{2}$, for general smooth convex objectives.
For $L$-smooth, $\mu$-strongly convex objectives, they also showed that silver stepsizes reduce the squared distance to the minimizer by a factor $\varepsilon$ in $O(\kappa^{\log_\rho 2}\log(1/\varepsilon))$ iterations, where $\kappa=L/\mu$~\citep{AltschulerParrilo2025HedgingI}.
Their work introduced silver stepsizes and recursive gluing, and conjectured the asymptotic optimality of these rates.
Subsequent work improves the constant in the function-value bound and establishes accelerated gradient-norm guarantees~\citep{GrimmerShuWang2025LongSteps}.
Further refinements of recursive gluing through composition and concatenation improved constants and optimized schedules for arbitrary prescribed horizons~\citep{GrimmerShuWang2025Composing,ZhangJiang2026}.
The silver-rate guarantee has also been extended to proximal and projected GD~\citep{BokAltschuler2026}, while random stepsizes yield asymptotic acceleration for separable smooth strongly convex objectives~\citep{AltschulerParrilo2024RandomStepsizes}.
In the anytime setting, a single predetermined infinite schedule achieves an $O(n^{-1.119})$ rate at every stopping time~\citep{ZhangLeeDuChen2025}, answering the question of whether anytime stepsize-based acceleration is possible~\citep{KornowskiShamir2024}.
For a broader overview, we refer readers to the recent survey~\citep{AltschulerParrilo2026Survey}.

\paragraph{Lower bounds.}
The classical $\Omega(n^{-2})$ first-order oracle lower bound for smooth convex optimization~\citep{NemirovskyYudin1983} applies to GD with arbitrary predetermined stepsizes.
For its standard quadratic-chain construction, see \citet[Section~2.1.2]{Nesterov2004}.
Stronger lower bounds were previously known under additional structural restrictions.
For $n=2^k-1$, \citet{WangMaYangZhou2024} determine the exact worst-case function-value bound for the original silver schedule.
Within the recursively generated class of basic $f$-composable schedules, the silver rate is also minimax optimal~\citep{GrimmerShuWang2025Composing}.
\section{Motivation and Technical Overview}
\label{sec:overview}
Replacing $f$ by $f/L$ and each $h_k$ by $Lh_k$ leaves the GD iterates and $\mathcal R_n$ unchanged, so we assume $L=1$ throughout. 
We first recall how previous constructions of \citet{MaChen2026} and \citet{JungChoYun2026} use selected steps to change the gradient direction.

\paragraph{Selecting the checkpoints.}
Fix a nonnegative schedule $h=(h_1,\ldots,h_n)$ and select indices $T=\{t_1<\cdots<t_k\}\subseteq\{1,\ldots,n\}$, which we call checkpoints.
Set $t_0=0$, $t_{k+1}=n+1$, and define
\begin{equation}
  b_i:=h_{t_i}\quad(1\le i\le k),
  \qquad
  s_i:=\sum_{t=t_{i-1}+1}^{t_i-1}h_t\quad(1\le i\le k+1).
  \label{local:checkpoint-definitions}
\end{equation}
Here $b_i$ is the selected stepsize at time $t_i$, and $s_i$ is the total stepsize of the intervening updates, indexed by $t_{i-1}<t<t_i$.
The last sum $s_{k+1}$ covers the steps after the final checkpoint.
\Cref{fig:checkpoint-gaps} illustrates this notation.

\begin{figure}[!htbp]
  \centering
  \includegraphics[width=\textwidth]{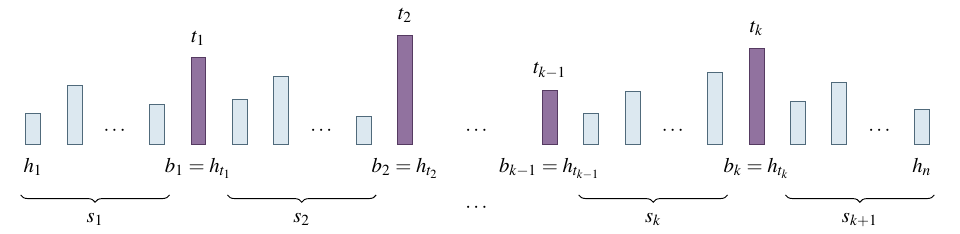}
  \caption{The selected stepsizes are $b_i=h_{t_i}$. Each brace marks the total stepsize $s_i$ of the intervening updates.}
  \label{fig:checkpoint-gaps}
\end{figure}

For selected long steps, \citet[Theorem~4.1]{MaChen2026} construct a hard function whose gradient stays constant during the intervening updates and changes after each selected step.
See also \citet[Appendix~A]{YeLiu2026} for a geometric illustration of the trajectory.
Their hard function is globally defined depending on all the coordinates.
\citet[Section~2]{JungChoYun2026} instead construct a sum of local bivariate functions.
During the intervening updates, the current component contributes a constant gradient, while all later components remain inactive.
This gives the following improved lower bound.

\begin{fact}[{\protect\citealp{JungChoYun2026}, Lemma~2.3}]
\label{fact:jcy-transfer}
For every positive schedule $h$ and every checkpoint set $T$,
\begin{equation}
  \mathcal R_n(h)\ge
  \frac{1}{4(1+s_{k+1})}
  \prod_{i=1}^k
  \left[\frac{b_i}{2(2+s_i)}\right]^2.
  \label{eq:jcy-transfer}
\end{equation}
For $k=0$, the product is empty and $s_1=\sum_{t=1}^n h_t$.
\end{fact}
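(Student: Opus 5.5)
We prove \cref{fact:jcy-transfer} by building an explicit hard instance. It is a sum of local bivariate convex $1$-smooth pieces, one for each checkpoint, plus a final piece that is a Huber-type function.

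\textbf{Construction.} Work in $\R^{k+1}$, or $\R^{2k+1}$ if each piece needs its own auxiliary coordinate. Take $x_*=0$ and start from $x_0=e_1$ scaled suitably. The design principle is that while the iterates run through the intervening updates $t_{i-1}<t<t_i$, only the $i$-th component is active, and its gradient is a constant vector $g_i$ of norm $\gamma_i$. The iterate therefore moves linearly, and the total displacement over that stretch is $s_i g_i$. Each component is a linear function glued to a quadratic, i.e.\ a Huber function $\phi_\gamma(u)=\gamma|u|-\gamma^2/2$ for $|u|\ge\gamma$ and $u^2/2$ otherwise. Such a function is convex, $1$-smooth, and has constant gradient $\gamma\,\mathrm{sign}(u)$ on the linear region.

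At the checkpoint $t_i$, the long step $b_i$ applied to the constant gradient $g_i$ overshoots. It lands the iterate in the next coordinate direction, where the $(i+1)$-st component becomes active. We choose the piece to be bivariate in $(x^{(i)},x^{(i+1)})$, for example $\gamma_i\sqrt{(x^{(i)})^2+(x^{(i+1)})^2}$ smoothed near the origin. The overshoot then converts displacement along coordinate $i$ into displacement along coordinate $i+1$.

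\textbf{Bookkeeping.} To stay in the linear region for the whole stretch, the distance to the kink must be at least of order $(2+s_i)\gamma_i$. The term $2$ accounts for the smoothing radius on both sides. The step $b_i$ then produces a new offset of size $b_i\gamma_i$ along the next coordinate. Setting the next scale $\gamma_{i+1}$ so that the new offset equals $(2+s_{i+1})\gamma_{i+1}$ is too strong. Instead we use the ratio $\gamma_{i+1}/\gamma_i=b_i/(2(2+s_i))$, which is consistent with a distance-at-least-$(2+s)\gamma$ invariant. The factor $2$ in the denominator absorbs the smoothing loss and the portion of the step spent reaching the kink.

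The final piece is a one-dimensional Huber function with scale $\gamma_{k+1}$. After the last intervening stretch of length $s_{k+1}$, its value is at least $\gamma_{k+1}^2/(2(1+s_{k+1}))$-ish. Concretely, place the iterate at distance $\approx(1+s_{k+1})\gamma_{k+1}$; GD from there cannot cross the minimizer, and the function value stays $\gtrsim\gamma_{k+1}^2$ relative to the initial distance, which is normalized by $\|x_0\|^2\approx(2+s_1)^2\gamma_1^2$.

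The resulting ratio is
$$\frac{f(x_n)}{\tfrac12\|x_0\|^2}\gtrsim\frac{1}{1+s_{k+1}}\prod_i(\gamma_{i+1}/\gamma_i)^2.$$
The leftover factor $(2+s_1)$ is handled by the way $\gamma_1$ is defined, which yields \eqref{eq:jcy-transfer} with constant $1/4$. The case $k=0$ is then just the Huber function with $s_1=\sum_t h_t$, which gives the classical $1/(4(1+\sum h))$.

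\textbf{Main obstacle.} The delicate part is verifying global properties of $f=\sum_i f_i$. Convexity is automatic for a sum, but $1$-smoothness fails for the sum unless the supports of the curvature regions are disjoint. We would handle this by making each bivariate piece depend on disjoint coordinate pairs linked through a rotation, or by scaling the pieces by $1/2$. Either choice likely accounts for the extra factors of $2$.

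The second check is that inactive components really contribute zero gradient along the trajectory. This requires later pieces to be constant, i.e.\ at their minimum plateau, until their coordinate becomes nonzero. The first check is to show that each iterate stays inside the appropriate linear or plateau regions, by induction on $i$.
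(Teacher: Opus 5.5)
\paragraph{There is a genuine gap: the proposed piece cannot perform the transfer.}
Your architecture is the right one: a chain of two-coordinate pieces, one active per stretch with constant gradient, closed by a one-dimensional Huber term. But the piece you propose cannot carry out the transfer, and the transfer is the whole content of the lemma.

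A radially symmetric piece such as a smoothed $\gamma_i\sqrt{(x^{(i)})^2+(x^{(i+1)})^2}$ has gradient parallel to the current point. An iterate on the $x^{(i)}$-axis therefore stays on that axis: the long step $b_i$ just reflects it through the origin and creates no displacement in coordinate $i+1$. Off the axis, a step only rescales both coordinates along the line through the origin; it never rotates displacement into $e_{i+1}$.

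The second problem is dormancy. A two-sided Huber piece vanishes only where its argument is zero, so it has no plateau. Any motion of coordinate $i+1$ during stretch $i$ already gives piece $i+1$ a nonzero gradient. After an overshoot, piece $i$ sits in its linear region on the far side and keeps pulling the iterate back with a gradient of size $\gamma_i$. So the two facts your last paragraph plans to verify by induction are false for this family, not merely unchecked: later pieces having zero gradient before their turn, and earlier pieces having zero gradient after it.

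Your bookkeeping also chooses $\gamma_{i+1}/\gamma_i=b_i/(2(2+s_i))$ to match the target rather than deriving it. The normalization $\|x_0\|\approx(2+s_1)\gamma_1$ with a ``leftover factor'' does not correspond to anything in a working construction.

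\paragraph{The missing ingredient.}
As summarized in \cref{sec:overview}, the construction uses the \emph{one-sided} Huber $H_\delta$ applied to a \emph{difference}, $\frac14H_{\delta_i}(x^{(i)}-x^{(i+1)}-\tau_i)$. This fixes both problems:
\begin{enumerate}
\item In its affine region the gradient is $\frac{\delta_i}{4}(e_i-e_{i+1})$. Every step, in particular $b_i$, lowers $x^{(i)}$ and raises $x^{(i+1)}$ by the same amount, so displacement really does move into coordinate $i+1$.
\item The piece is identically zero on a half-space, which is what allows pieces to be switched off.
\end{enumerate}
Taking $\tau_{i+1}$ equal to the value of $x^{(i+1)}$ just before the step $b_i$ keeps piece $i+1$ at zero throughout stretch $i$. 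It also passes on exactly the increment produced by $b_i$.

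The constants then follow rather than being fitted. Suppose the argument of piece $i$ equals $D_i$ at $x_{t_{i-1}}$. It decreases at rate $\delta_i/2$ per unit stepsize, so staying affine through $x_{t_i-1}$ forces $\delta_i=2D_i/(2+s_i)$. The step $b_i$ then yields $D_{i+1}=b_i\delta_i/4=D_i\,b_i/(2(2+s_i))$, which is the factor in the lemma.

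Start from $x_0=e_1$, so that $D_1=1$ and the initial distance is $1$. The terminal term $\frac12H_\delta(x^{(k+1)}-\tau_{k+1})$ with $\delta=D_{k+1}/(1+s_{k+1})$ gives $F(x_n)=D_{k+1}^2/(4(1+s_{k+1}))$, as in \eqref{local:terminal-value}.

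The weights $\frac14$ and $\frac12$ also settle your smoothness worry. Each coordinate enters at most two terms, so the smoothness inequalities sum to $\frac12\|y-x\|^2$, exactly as in \cref{local:app-chain}. Your alternative of disjoint coordinate pairs would not work, because consecutive pieces must share a coordinate for the transfer to happen at all.
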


Specifically, their construction uses the one-sided Huber function
\begin{equation*}
  H_\delta(z):=
  \begin{cases}
    0,&z\le0,\\
    z^2/2,&0\le z\le\delta,\\
    \delta z-\delta^2/2,&z\ge\delta,
  \end{cases}
  \qquad \delta>0,
\end{equation*}
and joins adjacent coordinates through
\begin{equation*}
  F(x):=\frac14\sum_{i=1}^k
  H_{\delta_i}\bigl(x^{(i)}-x^{(i+1)}-\tau_i\bigr)
  +\frac12H_{\delta_{k+1}}\bigl(x^{(k+1)}-\tau_{k+1}\bigr).
\end{equation*}
See~\citep[Equation (4)]{JungChoYun2026}.
The parameters $\delta_i>0$ and activation thresholds $\tau_i\ge0$ are chosen based on the stepsizes and the selected checkpoints~\citep[Equation (27)]{JungChoYun2026}.
Starting from the first coordinate, the selected step of size $b_i$ activates component $i+1$, while all later components remain inactive.\footnote{A component is active at an iterate if its gradient there is nonzero, and inactive otherwise.}
This step raises coordinate $i+1$ above its activation threshold, allowing the next component to contribute to the gradient.
The final one-dimensional component gives the function-value lower bound in \eqref{eq:jcy-transfer}.
They then optimize over checkpoint sets and analyze the resulting bound recursively, obtaining the $\Omega(n^{-\log_2(1+\sqrt3)})$ non-anytime lower bound~\citep[Section~3]{JungChoYun2026}.

\paragraph{The bottleneck.}
The coefficient $2$ of $s_i$ in \eqref{eq:jcy-transfer} suggests where to improve the construction.
During the intervening updates, component $i$ stays in its affine region and contributes a gradient proportional to $e_i-e_{i+1}$.
Its contribution decreases coordinate $i$ and increases coordinate $i+1$ by the same amount.
Both motions decrease the difference $x^{(i)}-x^{(i+1)}$ that keeps the component active.
To keep component $i+1$ inactive during the intervening updates, its activation threshold is set to the value of coordinate $i+1$ just before the update with stepsize $b_i$.
Only the additional increase produced by this update is then passed to the next component.
The resulting transfer ratio is $b_i/[c(2+s_i)]$, with $c=2$.
Following the exponent calculation in \citet[Section~3]{JungChoYun2026}, a coefficient $c$ of $s_i$ suggests a lower bound of order $n^{-\log_2(1+\sqrt{1+c})}$.
This motivates our two-coordinate hard function, which changes the gradient direction during the intervening updates to make $c$ arbitrarily close to $1$, improving the lower-bound exponent toward the silver exponent.

\paragraph{Our innovation: switching the gradient direction during the intervening updates.}
To reduce the coefficient $c$ of $s_i$, we modify the local hard function on coordinates $i$ and $i+1$.
We keep the initial gradient horizontal and then gradually turn it toward the negative $x^{(i+1)}$ direction.\footnote{Here horizontal means that the gradient has zero component in coordinate $i+1$, while vertical means that it has zero component in coordinate $i$.}

Let $\epsilon,\gamma>0$ be parameters to be chosen later, and set
\[
  c=\sqrt{1+2\gamma+\epsilon^2},\qquad
  R=\sqrt{\epsilon^2+(1+\gamma)^2}.
\]
We choose gradient vectors on the circular arc of radius $R$ centered at $(0,\gamma)$, joining $(c,0)$ to $(\epsilon,-1)$ in the quadrant $\{v^{(1)}\ge0,v^{(2)}\le0\}$.
Here $v=(v^{(1)},v^{(2)})$ is a two-coordinate gradient vector.
Its two coordinates will correspond to coordinates $i$ and $i+1$ of the full construction.
Near time $t_i$, the gradient turns toward $(\epsilon,-1)$.
For small $\epsilon$, the selected step of size $b_i$ therefore increases coordinate $i+1$ much more than it decreases coordinate $i$.
Let $K$ be the convex hull of this arc and the origin.

To obtain a GD trajectory for the prescribed stepsizes, we construct it backward.
Write $\alpha_1,\ldots,\alpha_m$ for the steps in this two-coordinate problem, $r_j$ for its iterate after $j$ steps, and $v_j=\Pi_K(r_j)$ for the corresponding gradient, where $\Pi_K$ denotes Euclidean projection onto $K$.
We fix the point and gradient just before the selected update to be $r_m=v_m=(\epsilon,-1)$.
For $j=m,m-1,\ldots,1$, choose the preceding point and gradient to satisfy
\[
  r_{j-1}=r_j+\alpha_jv_{j-1},
  \qquad v_{j-1}=\Pi_K(r_{j-1}).
\]
The first identity is the GD update read backward.
Once the backward gradient reaches $(c,0)$, all earlier gradients remain horizontal.
If the intervening updates have small total stepsize, the first gradient may lie partway along the arc. The total stepsize spent turning is can be controlled. This is made precise in \Cref{lem:local-backward}.

For each pair of adjacent coordinates $x^{(i)}$ and $x^{(i+1)}$, we use the vectors constructed above to form a finite maximum of affine functions.
Its Moreau envelope is convex and $1$-smooth and has the prescribed gradients along the constructed trajectory. This is made precise in \Cref{lem:local-transfer}.
We then scale and translate the resulting function to obtain $\Phi_i$ as defined in \eqref{local:chain-component}.
The details are given in \cref{sec:local}.
\Cref{fig:gradient-turning} shows the local gradients $\nabla\Phi_i(x_t)$, up to a common positive scale, on the left and the resulting iterate path on the right.
The gray region marks where this component vanishes.
The next component has activation level $\ell_{i+1}=x_{t_i-1}^{(i+1)}$, the value of coordinate $i+1$ just before taking the step of size $b_i$.
From $x_{t_i}$ onward, that coordinate may decrease as the next component runs, but it never falls below $\ell_{i+1}$. The old component therefore remains inactive.

\begin{figure}[!htbp]
  \centering
  \includegraphics[width=\textwidth]{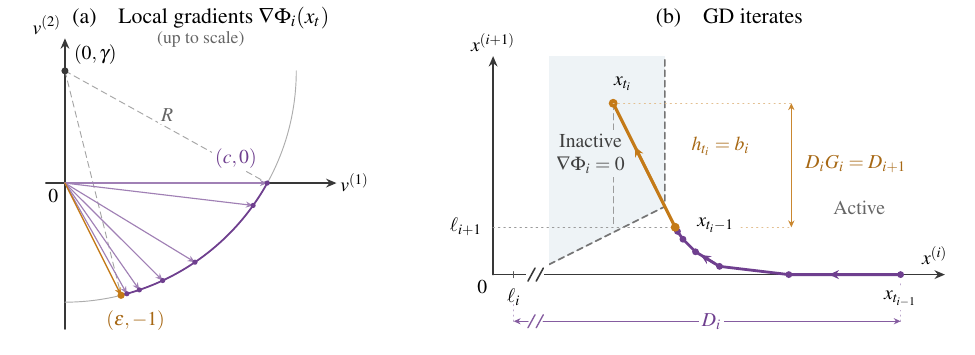}
  \caption{Local gradients $\nabla\Phi_i(x_t)$ and iterates. The step $h_{t_i}$ (orange) activates the next component.}
  \label{fig:gradient-turning}
\end{figure}

\begin{samepage}
\paragraph{The improved hard-function bound.}
Let $\ell_i$ be the activation level of component $i$.
At $x_{t_{i-1}}$, $D_i=x_{t_{i-1}}^{(i)}-\ell_i$ is the amount by which coordinate $i$ exceeds this level.
We start with $D_1=1$.
After scaling and joining the components, the step of size $b_i$ leaves coordinate $i+1$ a distance $D_{i+1}=D_iG_i$ above $\ell_{i+1}$, where
\[
  G_i=\frac{b_i}{c(a+s_i)+\epsilon b_i}.
\]
The constant $a$ depends only on $\epsilon,\gamma$.
The coefficient of $s_i$ is now $c$, which can be made arbitrarily close to $1$, instead of $2$.
The final bound introduce a constant $a$, a lower bound on $b_i$, and the additional term $\epsilon b_i$, made precise in the following lemma.
\par
\end{samepage}

\begin{lemma}[Checkpoint transfer bound]
\label{thm:transfer}
Fix $\epsilon,\gamma>0$ and let $c:=\sqrt{1+2\gamma+\epsilon^2}$.
There are finite constants $a=a(\epsilon,\gamma)$ and $B_0=B_0(\epsilon)$ such that, for every nonnegative schedule and every checkpoint set whose selected steps satisfy $b_i\ge B_0$,
\begin{equation}
  \mathcal R_n(h)\ge
  \frac{1}{4(1+s_{k+1})}
  \prod_{i=1}^k
  \left[\frac{b_i}{c(a+s_i)+\epsilon b_i}\right]^2.
  \label{eq:transfer}
\end{equation}
For $k=0$, the product is empty and $s_1=\sum_{t=1}^n h_t$.
\end{lemma}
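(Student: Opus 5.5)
The plan is to construct an explicit hard instance $F=\sum_{i=1}^k\Phi_i+\Psi$ on $\R^{k+1}$ and track the GD trajectory component by component, mirroring the proof of \Cref{fact:jcy-transfer} but with the bivariate Huber pieces replaced by the turning-gradient components sketched in the overview.

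\textbf{Step 1: the local component.} In the two-coordinate model, fix the arc from $(c,0)$ to $(\epsilon,-1)$ and its convex hull $K$ with the origin. For a prescribed block of intervening steps $\alpha_1,\dots,\alpha_m$ with total $s$, I would run the backward recursion $r_{j-1}=r_j+\alpha_j v_{j-1}$, $v_{j-1}=\Pi_K(r_{j-1})$, starting from $r_m=v_m=(\epsilon,-1)$ (this is \Cref{lem:local-backward}).
\begin{itemize}
\item Each backward step is solvable because $r\mapsto r-\alpha\Pi_K(r)$ is a monotone bijection onto its image.
\item The total stepsize spent while the gradient is still turning is bounded by a constant depending on $(\epsilon,\gamma)$; this is the constant $a$.
\end{itemize}
Next I would take the finite max of affine functions $\max_j\{\langle v_j,x-r_j\rangle+\text{const}\}$ (together with $0$ and the horizontal piece) and pass to its Moreau envelope. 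This gives a convex $1$-smooth function whose gradient at each $r_j$ is $v_j$, by the identity $\nabla(\text{env})(r)=\Pi_K$-type proximal formula (\Cref{lem:local-transfer}).

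\textbf{Step 2: the transfer ratio.} The key quantity is how far the selected step pushes coordinate $i+1$ above its level $\ell_{i+1}=x^{(i+1)}_{t_i-1}$. That increase is $b_i\cdot(\text{scale})\cdot 1$, since the last gradient is $(\epsilon,-1)$. Meanwhile the starting excess $D_i$ of coordinate $i$ must cover the backward path length. That path is, horizontally, $c\,s$ plus the turning cost, plus the $\epsilon b_i$ horizontal motion during the selected step itself, which must still stay in the active region. Scaling the component so that its first iterate sits exactly $D_i$ above $\ell_i$ yields $D_{i+1}=D_iG_i$ with
\[
  G_i=\frac{b_i}{c(a+s_i)+\epsilon b_i}.
\]
The requirement $b_i\ge B_0(\epsilon)$ is what guarantees that the post-step point lands in the region where the old component is inactive, i.e.\ beyond the projection's zero set, so that its gradient vanishes thereafter.

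\textbf{Step 3: gluing and the final bound.} I would set $\Phi_i(x)=\lambda_i\,\phi(\cdot)$, rescaled and translated into coordinates $(i,i+1)$ with level $\ell_{i+1}$. The $1/4$ and $1/2$ weights are chosen as in \citet{JungChoYun2026} so that the sum stays $1$-smooth (each coordinate lies in at most two components).
\begin{itemize}
\item By induction over $i$, I would verify that at all times only component $i$ is active on $(t_{i-1},t_i]$. Later components are inactive because their coordinate has not exceeded its level. Earlier components are inactive because coordinate $i$ never returns below $\ell_i$.
\item The one-dimensional Huber tail $\Psi$ on coordinate $k+1$, started at excess $D_{k+1}$, gives $f(x_n)-f_*\ge D_{k+1}^2/(4(1+s_{k+1}))$, as in the cited lemma.
\item The distance $\|x_0-x_*\|$ is normalized via $D_1=1$; the squares in the product come from quadratic scaling of function value versus $D^2$.
\end{itemize}

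\textbf{Main obstacle.} The hardest step is Step 1's quantitative control. One needs that the backward iteration reaches the horizontal part after turning cost at most $a$ regardless of how the $\alpha_j$ are split. One also needs that the Moreau envelope interpolates exactly, requiring $r_j-v_j\in\partial(\text{max-affine})^{-1}$ consistency, i.e.\ that the points are consistent with the convex interpolation conditions. I would verify these via the interpolation inequalities $\langle v_j-v_l,r_j-r_l\rangle\ge\|v_j-v_l\|^2$, using monotonicity of $\Pi_K$ and the backward construction.
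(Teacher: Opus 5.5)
\textbf{Verdict: genuine gap.} Your architecture matches the paper's: a backward construction on the arc with $v_j=\Pi_K(r_j)$, then the Moreau envelope of a finite max of linear pieces, rescaled components $D_i^2f_i(\cdot/D_i)$, and a terminal Huber term. Your explanation of why $b_i\ge B_0(\epsilon)$ switches the old component off is also correct. The gap is that the part you call the main obstacle, the content of \cref{lem:local-backward}, is never supplied, and the tools you propose would not supply it.

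\emph{Solvability of the backward step.} The map $r\mapsto r-\alpha\Pi_K(r)$ is neither monotone nor injective once $\alpha>1$, and the local steps $h_t/2$ are unbounded. On $K$ the map is multiplication by $1-\alpha<0$. For $v$ in the relative interior of the arc with outward unit normal $n$, the points $v+tn$ and $v-\tfrac{t}{\alpha-1}n$ have the same image. More to the point, existence of a preimage alone would not show that $\Pi_K(r_{j-1})$ lies on the arc. Writing $r_j=(u_j,-y_j)$, the argument also needs the invariants $u_j\ge\epsilon$, $y_j\ge1$, $y_j/u_j\le1/\epsilon$. The paper writes the preimage down explicitly. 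With $\Delta_j=y_j+\gamma-\alpha_j\gamma$, it takes $v_{j-1}$ to be the arc point in direction $(u_j,-\Delta_j)$ from the centre $(0,\gamma)$, or $(c,0)$ once $\Delta_j\le u_j\gamma/c$, after which the gradient stays horizontal. It then checks that $r_{j-1}-v_{j-1}$ lies in the normal cone.

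\emph{Bounded turning time.} More seriously, nothing in your plan explains why the turning phase uses total stepsize at most a constant, independent of $S$ and of how the $\alpha_j$ are split. The co-coercivity inequalities you cite follow automatically from $v_j=\Pi_K(r_j)$ and carry no information about this. Moreover, with pieces $\rho\langle v,z-z_\circ\rangle$ all passing through one point $z_\circ$, one gets $\nabla\operatorname{env}_1\phi(z_\circ+\rho r_j)=\rho v_j$ exactly, so there is no interpolation issue to verify.

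The missing idea is the effect of the offset centre $(0,\gamma)$. Set $\sigma_j=(y_j+\gamma)/u_j$ and $A(\sigma)=\sigma+\sqrt{1+\sigma^2}$. A turning backward step gives $\sigma_{j-1}=\sigma_j-\alpha_j\gamma/u_j$ and $u_{j-1}/u_j\le\bigl(A(\sigma_j)/A(\sigma_{j-1})\bigr)^{R/\gamma}$. While turning, $\sigma$ stays in $[\gamma/c,(1+\gamma)/\epsilon]$, so $u$ grows only by a bounded factor. But each turning step adds at least $\epsilon\alpha_j$ to $u$. Together these cap the turning stepsize by $H(\epsilon,\gamma)$, which is what gives $Q\le H$.

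\emph{Where the constant comes from.} Your account of the denominator misplaces the constant, which is not the turning time. The denominator encodes two separate conditions on the scale $\rho=1/(cS+\epsilon b+A)$.
\begin{enumerate}
\item $U=1-\rho(P+\epsilon b)\ge0$. This keeps coordinate $i$ from falling below $\ell_i$ during the selected step, so component $i-1$ stays off; it is the source of the $\epsilon b_i$ term.
\item Every piece is nonpositive on $\{z_1\le0,\ z_2\ge0\}$, where $z_\circ=(1,0)-\rho r_0$ and $r_0=(\epsilon+P,-1-Q)$. This keeps a component off before its turn and makes the origin a minimizer. It is exactly your claim that later components are inactive ``because their coordinate has not exceeded its level'', and it is not automatic.
\end{enumerate}
For $v=(\epsilon,-1)$, condition 2 reads $\rho\bigl(\epsilon+P+(1+Q)/\epsilon\bigr)\le1$. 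So the vertical offset $1+Q$ enters divided by the slope $\epsilon$, and the additive constant must be at least $\epsilon+(1+H)/\epsilon$. It also shows why the turning bound is essential: if $Q$ were comparable to $S$, the coefficient of $s_i$ would become of order $1/\epsilon$ rather than $c$, destroying the bound.
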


We explain the construction in \cref{sec:local} and give the detailed verifications in \cref{app:local}.
The constants $a$ and $B_0$ may grow as $\epsilon,\gamma$ decrease, so we must keep track of them when choosing the parameters.

\paragraph{From the construction to the main lower bounds.}
To use this bound, we must choose checkpoints that keep the product $\prod_{i=1}^kG_i$ large relative to the total stepsize $s_{k+1}$.
We express this choice as minimizing \eqref{global:eq-checkpoint-expression} over checkpoint sets and bound its minimum recursively (\cref{global:lem-value-decomposition,thm:weighted-recursion}), following the idea of \citet[Section~3]{JungChoYun2026}.
The main difference is the additional term $\epsilon b_i$ in \eqref{eq:transfer}.
Although $\epsilon$ is small, $b_i$ has no upper bound, so $\epsilon b_i$ need not be small relative to $c(a+s_i)$ and cannot be absorbed into the constant $a$ independently of the schedule.
In the recursion, these terms shift the parameters of the shorter problems, as shown in \eqref{global:eq-value-decomposition}.
To handle the additional $\epsilon b_i$ terms, we use the splitting inequality in \cref{global:lem-gap-splitting}, which retains the correction term $\sum_i(\epsilon b_i/c)^\nu$ for a suitable exponent $\nu\in[1/2,1)$.
This term cancels the extra terms arising from the shifted parameters in the induction, yielding the bound in \cref{thm:weighted-recursion}.
This gives an $\Omega(n^{-p})$ non-anytime lower bound for every $p>\psil$.
Keeping track of the constants as $p\downarrow\psil$ gives \cref{thm:nonanytime}, as shown in \cref{sec:global}.

For the anytime result, we follow the approaches of \citet[Section~4.1]{TsaiFatkhullinZhangHe2026} and \citet[Section~4]{JungChoYun2026}.
At horizons where the last step is the largest so far, we compare the total stepsize with that largest step and the final error.
Using our stronger construction gives the exponent $2p/(1+p)$ at infinitely many horizons.
Tracking the constants as $p\downarrow\psil$ then yields \cref{thm:anytime}, as shown in \cref{sec:anytime}.

\section{(Almost) Tight Non-Anytime Lower Bound}
\label{sec:nonanytime}

We first explain how to construct the hard functions in \cref{thm:transfer}, and then give the lemmas that complete the proof of \cref{thm:nonanytime}.
The detailed proofs and verifications are in \cref{app:local} and \cref{app:global}, respectively.

\subsection{Proof of Lemma~\ref{thm:transfer}}
\label{sec:local}

Fix $\epsilon,\gamma>0$ and let $c=\sqrt{1+2\gamma+\epsilon^2}$.
The construction has two parts.
First, we build a two-coordinate function for the intervening updates and the selected step that follows them.
We then join copies of this function, keeping earlier and later components inactive while the current one runs.

\paragraph{Choosing the gradient directions.}
Use the arc and convex set $K$ from \cref{sec:overview}.
Because $\nabla\operatorname{env}_1\sigma_K(r)=\Pi_K(r)$, the backward construction must satisfy both the GD update and the projection condition.
The following lemma guarantees this for any given stepsizes $\alpha_j$ and bounds the total displacement of the coordinates.

\begin{lemma}
\label{lem:local-backward}
There is a finite constant $H=H(\epsilon,\gamma)$ such that, for every $m\ge0$ and every sequence of stepsizes $\alpha_1,\ldots,\alpha_m\ge0$, there are points $r_0,\ldots,r_m$ and vectors $v_0,\ldots,v_m$ on the arc satisfying
\[
  r_m=v_m=(\epsilon,-1),\qquad
  r_j=r_{j-1}-\alpha_jv_{j-1}\quad(1\le j\le m),
  \qquad v_j=\Pi_K(r_j)\quad(0\le j\le m).
\]
Moreover, define
\begin{equation}
  S:=\sum_{j=1}^m\alpha_j,\qquad
  P:=\sum_{j=1}^m\alpha_j v_{j-1}^{(1)},\qquad
  Q:=-\sum_{j=1}^m\alpha_jv_{j-1}^{(2)},
  \label{local:displacement-constants}
\end{equation}
then the construction also satisfies
\begin{equation}
P\le cS,\qquad
    0\le Q \le H,
  \label{local:displacement-bounds}
\end{equation}
\end{lemma}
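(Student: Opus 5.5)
The plan is to build the trajectory backward one step at a time, keeping an explicit geometric invariant on where $r_j$ sits relative to the arc. Write $o:=(0,\gamma)$ for the center of the arc and parametrize the arc by its angle $\theta$ about $o$. It runs counterclockwise from $\theta_{\mathrm{end}}$, the angle of $(\epsilon,-1)$, to $\theta_{\mathrm{start}}$, the angle of $(c,0)$. One checks that $|(c,0)-o|^2=c^2+\gamma^2=R^2$, so $(c,0)$ is indeed on the circle.

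\textbf{Step 1: normal cones of $K$.} For an interior arc point $v$, the normal cone of $K$ at $v$ is the ray $\{\lambda(v-o):\lambda\ge0\}$. At $(c,0)$ the normal cone is spanned by the radial direction $(c,-\gamma)$ and the outward normal $(0,1)$ of the segment $[0,(c,0)]$. Since $K$ lies in $\{v^{(2)}\le0\}$, this cone contains $(1,0)$.

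Hence $\Pi_K(r)=v$ if and only if one of the following holds:
\begin{itemize}
\item[(a)] $r=o+\rho(v-o)$ with $\rho\ge1$ and $v$ an interior arc point;
\item[(b)] $r-(c,0)$ lies in the cone at $(c,0)$, when $v=(c,0)$.
\end{itemize}
The same description applies at $(\epsilon,-1)$ for the radial ray.

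\textbf{Step 2: the invariant.} I will maintain that $r_j$ is either of the form (a) with angle $\theta_j$ and radius factor $\rho_j\ge1$, or of the form (b). The base case is $r_m=v_m=(\epsilon,-1)$ with $\rho_m=1$.

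\textbf{Step 3: one backward step, case (a).} Given $r_j$ of form (a), I seek $\theta\in[\theta_j,\theta_{\mathrm{start}}]$ such that $r_j+\alpha_jv(\theta)$ lies on the ray from $o$ through $v(\theta)$ at distance at least $R$. Consider the signed angle $g(\theta)$ between $r_j+\alpha_jv(\theta)-o$ and $v(\theta)-o$.
\begin{itemize}
\item At $\theta=\theta_j$ we have $r_j+\alpha_jv_j-o=(\rho_j+\alpha_j)(v_j-o)+\alpha_jo$. The extra term $\alpha_jo$ points upward, which rotates the vector counterclockwise, so $g(\theta_j)\ge0$.
\item Either $g$ vanishes at some $\theta\le\theta_{\mathrm{start}}$, and the intermediate value theorem gives $v_{j-1}=v(\theta)$, or it stays positive up to $\theta_{\mathrm{start}}$.
\item In the second case, I take $v_{j-1}=(c,0)$ and check that $r_j+\alpha_j(c,0)-(c,0)$ lies in the cone of form (b). This should follow from $g\ge0$ together with the cone containing both the radial direction and $(1,0)$.
\end{itemize}
The length condition $|r_{j-1}-o|\ge R$ follows because the radial component grows by at least $\alpha_j\langle v,v-o\rangle/R\ge0$.

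\textbf{Step 4: one backward step, case (b).} Once some $v_j=(c,0)$, every earlier step adds a positive multiple of $(c,0)$, which is in the cone. So all earlier $v$ equal $(c,0)$, and the gradients stay horizontal as claimed.

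\textbf{Step 5: the easy bounds.} On the quadrant part of the arc, $v^{(1)}$ increases toward $(c,0)$, so $0\le v^{(1)}\le c$ and hence $P\le cS$. Also $v^{(2)}\le0$ on the arc, so $Q\ge0$.

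\textbf{Step 6: the bound $Q\le H$ (main obstacle).} Telescoping the second coordinate gives $Q=r_m^{(2)}-r_0^{(2)}=-1-r_0^{(2)}$, so I need a uniform lower bound on $r_0^{(2)}$.
\begin{itemize}
\item Horizontal steps do not change the second coordinate. It therefore suffices to bound $\rho$ at the last turning index, since then $r^{(2)}\ge\gamma-\rho(R+\gamma)$.
\item In the continuum picture, a step of size $d\alpha$ increases $\rho$ by about $d\alpha$. It rotates the angle by about $\gamma\cos\varphi\,d\alpha/(R\rho)$, where $\varphi$ is the angle between $o$ and the normal direction; $\cos\varphi$ is bounded below by some $\kappa(\epsilon,\gamma)>0$ on the arc.
\item This yields $d\rho/\rho\lesssim d\theta/\kappa'$. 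Since the total turning angle is at most $\theta_{\mathrm{start}}-\theta_{\mathrm{end}}<\pi$, this gives $\rho\le e^{\pi/\kappa'}$.
\end{itemize}
The work is to make this discrete for arbitrarily large single steps $\alpha_j$. I would prove a one-step inequality of the form $\log(\rho_{j-1}/\rho_j)\le C(\theta_{j-1}-\theta_j)$, with an exception only for the final turning step that lands on $(c,0)$. That final step contributes at most one bounded jump, because a large $\alpha_j$ forces the direction close to $(c,0)$ and case (b) absorbs the rest. Summing the one-step inequalities then gives $H(\epsilon,\gamma)$.
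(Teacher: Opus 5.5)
\textbf{Gap in Step 6.} Steps 1--5 are essentially fine, modulo two repairs noted below. The bound $Q\le H$, however, is the only nontrivial content of the lemma, and it is not proved. Step 6 gives the continuum heuristic and then says you \emph{would} prove $\log(\rho_{j-1}/\rho_j)\le C(\theta_{j-1}-\theta_j)$ ``with an exception only for the final turning step.'' The mechanism you cite for large steps controls the wrong quantity. What must be controlled is $x_j:=\alpha_j/\rho_j$, not $\alpha_j$, and nothing in your sketch shows that at most one step can have $x_j$ large.

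\textbf{The missing observation.} Case (a) itself bounds $x_j$. Write $v_j=(p_j,-q_j)$, $e_j=(v_j-o)/R$, and
$w_j:=r_j-(1-\alpha_j)o=\rho_j R\,e_j+\alpha_j o$.
\begin{itemize}
\item Since $r_{j-1}-o=w_j+\alpha_j(v_{j-1}-o)$ and both $w_j$ and $v_{j-1}-o$ have positive first coordinate, case (a) forces $v_{j-1}-o$ to point along $w_j$. This is the paper's closed-form choice \eqref{local:backward-direction}, so no intermediate value argument is needed.
\item Every direction from $o$ to the arc has negative second component. So case (a) forces $w_j^{(2)}<0$, i.e.\ $\alpha_j\gamma<\rho_jR$. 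Hence $x_j<R/\gamma$ and $|w_j|<2\rho_jR$.
\item Then $\rho_{j-1}=|w_j|/R+\alpha_j\le\rho_j(1+2x_j)$.
\item The scalar cross product $e_j\times w_j=\alpha_j\gamma p_j/R$ gives $\theta_{j-1}-\theta_j\ge\sin(\theta_{j-1}-\theta_j)=\alpha_j\gamma p_j/(R|w_j|)\ge x_j\gamma\epsilon/(2R^2)$.
\end{itemize}
Thus $\log(\rho_{j-1}/\rho_j)\le 2x_j\le\frac{4R^2}{\gamma\epsilon}(\theta_{j-1}-\theta_j)$ holds for \emph{every} case-(a) step, with no exception. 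The case-(b) steps are horizontal and do not move $r^{(2)}$. The arc subtends an angle less than $\pi/2$ about $o$, so $\rho\le e^{2\pi R^2/(\gamma\epsilon)}$ at the last turning index, and $Q=-1-r_0^{(2)}\le(1+\gamma)\rho$.

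\textbf{Two repairs in Step 3.} First, the radial component of $r_j-o$ along the new direction is $\rho_jR\cos(\theta-\theta_j)$, not $\rho_jR$. So ``it grows by a nonnegative amount'' does not give $|r_{j-1}-o|\ge R$. Use instead $|r_{j-1}-o|=|w_j|+\alpha_jR\ge\rho_jR+\alpha_j(R-\gamma)\ge R$, which is the paper's bound $D_j\ge R-\alpha_j\gamma$. Second, in case (b) the relevant generator of the normal cone at $(c,0)$ is $(0,1)$, not $(1,0)$. Writing $r_j=(u_j,-y_j)$, the condition that $r_{j-1}-(c,0)=(u_j+(\alpha_j-1)c,-y_j)$ lies in the cone spanned by $(c,-\gamma)$ and $(0,1)$ is exactly $y_j\le\gamma(u_j/c+\alpha_j-1)$. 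This is $g(\theta_{\mathrm{start}})\ge0$, together with $y_j\ge1$ to make the coefficient of $(c,-\gamma)$ positive.

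\textbf{Comparison with the paper.} With these fixes your route is a genuine alternative. The paper tracks $\sigma_j=(y_j+\gamma)/u_j$ and bounds the total turning stepsize via $u_{j_\star}\ge\epsilon(1+H_{\mathrm{turn}})$ and the telescoping potential $A(\sigma)=\sigma+\sqrt{1+\sigma^2}$. It then uses $Q\le H_{\mathrm{turn}}$ because $q\le1$. You instead telescope $r^{(2)}$ and bound the radial factor by the turned angle. Your bound $\log H=O(1/(\gamma\epsilon))$ suffices for the lemma as stated. Under \eqref{global:eq-parameter-choice}, though, it is $O(\eta^{-1-p})$, not the $O(\eta^{-1}\log(1/\eta))$ used in \eqref{eq:parameter-growth} to get the rate in \cref{thm:nonanytime}. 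To replace $1/\epsilon$ by $\log(1/\epsilon)$ you must use how $p$ varies along the arc. With $\sigma=\tan\phi$, the paper's $A(\sigma)$ equals $\sec\phi+\tan\phi$, the primitive of $\sec\phi$, which does exactly this.
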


Here, the shift $\gamma>0$ allows the gradient direction to change as we construct the trajectory backward.
If $\gamma=0$, the gradient would remain at $(\epsilon,-1)$.
Keeping $\epsilon>0$ lets a sufficiently large $b_i$ move the first coordinate far enough to put the old component into a region where it remains zero as the next component runs.
The proof and explicit formulas for $H$ are given in \cref{local:app-backward}.

\paragraph{Constructing the local hard function.}
\Cref{lem:local-backward} gives the trajectory during the intervening updates.
We now scale and translate it to start at $(1,0)$, then perform a final GD update with stepsize $b$.
The final update must raise the second coordinate by a specified amount and leave the old component zero while the next component runs.
The following lemma records both properties.

\begin{lemma}[Two-coordinate transfer]
\label{lem:local-transfer}
There is a finite constant $C_0=C_0(\epsilon,\gamma)$ with the following property.
Let $m\ge0$, $\alpha_1,\ldots,\alpha_m\ge0$, $S=\sum_{j=1}^m\alpha_j$, $b\ge1+\epsilon^{-2}$, and $A\ge C_0$.
There are a nonnegative, convex, $1$-smooth function $f:\R^2\to\R$ and numbers $U,Y\ge0$ such that GD follows the trajectory below, where $G=b/(cS+\epsilon b+A)$:
\[
  (1,0)\xrightarrow{\alpha_1,\ldots,\alpha_m}(U+\epsilon G,Y)
  \xrightarrow{b}(U,Y+G).
\]
During the first $m$ updates, the first coordinate is nonincreasing and the second is nondecreasing.
Moreover, the function satisfies
\begin{equation}
  f(x,y)=0\quad\text{if }x\le0,\ y\ge0,
  \qquad
  f(U,y)=0\quad\text{if }y\ge Y.
  \label{local:zero-regions}
\end{equation}
\end{lemma}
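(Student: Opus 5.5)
We build $f$ as a scaled, translated Moreau envelope of a support function, fed by the backward trajectory of \cref{lem:local-backward}. First apply \cref{lem:local-backward} to the given $\alpha_1,\ldots,\alpha_m$. This yields points $r_0,\ldots,r_m$ and arc vectors $v_j=\Pi_K(r_j)$ with $r_m=v_m=(\epsilon,-1)$, together with the displacements $P\le cS$ and $0\le Q\le H$.

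The base function is $g(r)=\operatorname{env}_1\sigma_K(r)$, the Moreau envelope of the support function of $K$. It is convex, $1$-smooth, nonnegative (since $0\in K$), and satisfies $\nabla g=\Pi_K$. Hence GD on $g$ with steps $\alpha_j$ started at $r_0$ visits exactly $r_1,\ldots,r_m$, and the final step of size $b$ from $r_m$ uses gradient $(\epsilon,-1)$.

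Now rescale. For a scale $\lambda>0$ and shift $z\in\R^2$, the function $f(x)=\lambda^2 g((x-z)/\lambda)$ is again $1$-smooth and convex. Its GD trajectory is $x_j=z+\lambda r_j$ with the same stepsizes, and its gradients are $\lambda v_j$. Choose $\lambda=G$ and $z=(1,0)-Gr_0$, so that the trajectory starts at $(1,0)$. Just before the last step the iterate is $z+Gr_m$. The last step adds $bG(-\epsilon,1)$, so the first coordinate drops by $\epsilon bG$ and the second rises by $bG$.

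This does not literally match the displayed trajectory, which moves by $(-\epsilon G,G)$. So the normalization must be adjusted: take the final gradient $\lambda(\epsilon,-1)$ with $\lambda b=G$, i.e.\ $\lambda=G/b$, and set $U+\epsilon G=1-\lambda P$ and $Y=\lambda Q$. The requirement $U\ge 0$ then reads $1\ge \lambda P+\epsilon G=(G/b)(P+\epsilon b)$. Using $P\le cS$, this is implied by $G\le b/(cS+\epsilon b)$, which holds with slack $A$ by the definition of $G$.

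Monotonicity during the first $m$ updates follows because every arc vector has $v^{(1)}\ge0$ and $v^{(2)}\le0$. Nonnegativity, convexity and $1$-smoothness are inherited from $g$.

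The real work, and the role of the slack $A\ge C_0$ and of $b\ge 1+\epsilon^{-2}$, is \eqref{local:zero-regions}. The envelope $g$ vanishes exactly on the normal cone of $K$ at $0$, which is the polar cone $K^\circ=\{r:\langle r,v\rangle\le0\ \forall v\in K\}$. Since the arc runs from $(c,0)$ to $(\epsilon,-1)$ in the fourth quadrant, $K^\circ$ is the cone bounded by the rays orthogonal to these endpoints. It contains the quadrant $\{r^{(1)}\le0,\ r^{(2)}\ge0\}$ and also points with slightly positive $r^{(1)}$, up to slope $\epsilon$ relative to $r^{(2)}$.

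So the plan is to pick the shift so that, in original coordinates, the zero set of $f$ contains $\{x\le0,y\ge0\}$. This needs the corner $z$ (the image of the origin) to lie in $\{x\ge 0, y\le 0\}$. Here $z^{(1)}=1-\lambda r_0^{(1)}$, which requires $\lambda|r_0|\lesssim 1$. Since $|r_0|\le |r_m|+P+Q$, this is controlled by $A$ being large relative to $H$ and the arc constants; this is where $C_0$ is chosen.

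One caveat: $f$ may contain an extra nonnegative term, or be defined as the envelope of a max of affine functions, as the overview suggests. The simplest remedy is to translate so that the apex sits at a point whose polar cone covers both required regions.

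The second region is $\{(U,y):y\ge Y\}$. In $r$-coordinates this is the vertical ray from $r_m+b\cdot$(step), shifted by $-\epsilon$ horizontally and going upward. We need it inside $z+\lambda K^\circ$. The ray direction $(0,1)$ is in $K^\circ$, so it suffices that its base point, namely $r_m-b(\epsilon,-1)=(\epsilon-b\epsilon,\,b-1)$, lies in $K^\circ$. That amounts to $\langle(\epsilon(1-b),b-1),(\epsilon,-1)\rangle\le0$ and $\langle\cdot,(c,0)\rangle\le0$. Both hold for $b\ge1$, and the intermediate arc vectors are handled by convexity of $K^\circ$.

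The hypothesis $b\ge1+\epsilon^{-2}$ presumably enters when this base point is expressed relative to the shifted apex rather than the origin. I expect this bookkeeping of the apex position against both zero-region conditions simultaneously to be the main obstacle. Concretely, I would verify it by writing the two supporting half-planes of $K^\circ$ explicitly and checking the inequalities using $Q\le H$ and $A\ge C_0$.
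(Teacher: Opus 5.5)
Your construction is the paper's: GD on $\operatorname{env}_1\sigma_K$, rescaled by $\lambda=G/b=1/(cS+\epsilon b+A)$ and translated by $z=(1,0)-\lambda r_0$. The paper uses the polytope $\operatorname{conv}\{0,v_0,\dots,v_m,(c,0)\}$ instead of $K$, which changes nothing. Your trajectory, $U\ge0$, and monotonicity arguments are fine. The gap is in the zero regions \eqref{local:zero-regions}. You rightly call them the real content, but both of your checks go wrong.

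\textbf{First region.} Start with the description of $K^\circ$. Because every arc vector $(p,-q)$ has $0\le q\le p/\epsilon$, $K^\circ=\{r:\ r^{(1)}\le0,\ r^{(2)}\ge\epsilon r^{(1)}\}$. Since $(c,0)\in K$, it contains no point with $r^{(1)}>0$. The extra sliver beyond the second quadrant consists of points with slightly \emph{negative} $r^{(2)}$. The quadrant $\{x\le0,\,y\ge0\}$ lies in the zero set $z+\lambda K^\circ$ iff $-z\in K^\circ$. Your proposed condition $z\in\{x\ge0,\,y\le0\}$ can never hold: $r_0=(\epsilon+P,-1-Q)$ gives $z^{(2)}=\lambda(1+Q)>0$. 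What is needed is exactly the sliver, namely $z^{(2)}\le\epsilon z^{(1)}$. This is equivalent to $\lambda\bigl[\epsilon+P+(1+Q)/\epsilon\bigr]\le1$, which follows from $P\le cS$, $Q\le H$ and $A\ge C_0:=\epsilon+(1+H)/\epsilon$. The horizontal displacement $P$ enters with weight $1$ and is absorbed by the $cS$ in $1/\lambda$. The vertical displacement $1+Q$ enters with weight $1/\epsilon$ and is absorbed by $A$. An isotropic bound like $\lambda\lVert r_0\rVert\lesssim1$ cannot deliver this, since $\lambda P$ may be close to $1$.

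\textbf{Second region.} The ray $\{(U,y):y\ge Y\}$ starts at height $Y$, the value \emph{before} the final step. That is what the chaining needs, since afterwards one only knows $x^{(i+1)}\ge\ell_{i+1}=D_iY_i$. In $r$-coordinates its base point is $\bigl((U,Y)-z\bigr)/\lambda=(\epsilon(1-b),-1)$. It is not $r_m-bv_m=(\epsilon(1-b),b-1)$, which is the image of $(U,Y+G)$. Testing the correct base point against $(\epsilon,-1)$ gives $1-\epsilon^2(b-1)\le0$, which is precisely $b\ge1+\epsilon^{-2}$. Against $(c,0)$ the test is immediate, and the direction $(0,1)$ lies in $K^\circ$. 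So the hypothesis on $b$ enters directly here, not through the position of the apex. Your check at the higher point, which needs only $b\ge1$, proves a weaker statement that would not keep the old component inactive in the chain. With these two corrections your argument coincides with the paper's.
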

In the construction, the displacement bounds \eqref{local:displacement-bounds}, together with $b\ge1+\epsilon^{-2}$ and $A\ge C_0$, ensure \eqref{local:zero-regions}; see \cref{local:app-transfer}.
The proof and explicit formulas for $C_0$ are given in \cref{local:app-transfer}.

We are going to pass the additional height $G$ to the next component.
When we join the components, the first zero region keeps a component inactive before its turn, and the second keeps it inactive afterward, provided that its first coordinate stays at $U$ and its second coordinate stays at least $Y$.
In the next paragraph, we will see that the monotonicity of the first $m$ updates ensures the above condition.

\paragraph{Combining the local hard functions.}
For each $i$, we apply \cref{lem:local-transfer} with the local stepsizes specified below to obtain a function $f_i$.
We scale and translate $f_i$ to form $\Phi_i$ in \eqref{local:chain-component}, then define the hard function $F$ as half the sum of these components and a final one-dimensional Huber function in \eqref{local:global-objective}.
Each coordinate appears in at most two $1$-smooth terms, so this makes the full objective $F$ $1$-smooth.
Because of the factor $1/2$, a GD step of size $h_t$ corresponds to a local step of size $h_t/2$.

Choose $B_0=2(1+\epsilon^{-2})$ and $a\ge 2C_0/c$.
For component $i$, put $m=t_i-t_{i-1}-1$ and apply \cref{lem:local-transfer} with
\[
  \alpha_j=\frac{h_{t_{i-1}+j}}2\quad(1\le j\le m),\qquad
  S=\frac{s_i}2,\qquad b=\frac{b_i}2,\qquad A=\frac{ca}2.
\]
Its transfer ratio is
\[
  G_i=\frac{b_i}{c(a+s_i)+\epsilon b_i}.
\]
Write $f_i,U_i,Y_i$ for the corresponding function and coordinates supplied by \cref{lem:local-transfer}. Set
\begin{equation}
  D_1:=1,\quad\ell_1:=0,\qquad
  D_{i+1}:=D_iG_i,\quad\ell_{i+1}:=D_iY_i\quad(1\le i\le k).
  \label{local:chain-parameters}
\end{equation}
Here, $D_i$ is the amount by which coordinate $i$ exceeds its activation level $\ell_i$ at $x_{t_{i-1}}$.
For $x\in\R^{k+1}$, define the component acting on coordinates $i,i+1$ by
\begin{equation}
  \Phi_i(x):=D_i^2 f_i\left(
    \frac{x^{(i)}-\ell_i}{D_i},\frac{x^{(i+1)}}{D_i}\right).
  \label{local:chain-component}
\end{equation}
This scaling preserves $1$-smoothness, while the first input measures height above $\ell_i$.
The final objective is defined as half the sum of these components and a final one-dimensional Huber function:
\begin{equation}
  F(x):=\frac12\sum_{i=1}^k\Phi_i(x)
       +\frac12H_\delta(x^{(k+1)}-\ell_{k+1}).
  \label{local:global-objective}
\end{equation}
From $x_{t_i}$ onward, the subsequent iterates satisfy
\[
  x^{(i)}=\ell_i+D_iU_i,\qquad
  x^{(i+1)}\ge\ell_{i+1}=D_iY_i.
\]
Thus the two inputs to $f_i$ remain $U_i$ and at least $Y_i$, respectively, so the second zero region in \Cref{lem:local-transfer} keeps $\Phi_i$ inactive.
The first zero region keeps later components inactive until their turn.
The induction verifying these coordinate relations is in \cref{local:app-chain}.

At $x_{t_k}$, the last coordinate is a distance $D=\prod_{i=1}^kG_i$ above $\ell_{k+1}$.
With $\delta=D/(1+s_{k+1})$, the final Huber calculation gives
\[
  F(x_n)-\min F
  =\frac{D^2}{4(1+s_{k+1})}
  =\frac{1}{4(1+s_{k+1})}
    \prod_{i=1}^k
    \left[\frac{b_i}{c(a+s_i)+\epsilon b_i}\right]^2.
\]
The construction starts at $e_1$ and has a minimizer at the origin, so the initial distance is $1$.
Under the normalization in \eqref{eq:worst-case-error}, $\mathcal R_n(h)$ is at least twice this value.
Dropping the factor $2$ gives \cref{thm:transfer}.
When there are no checkpoints, the same Huber calculation applies with $D=1$.
The smoothness of $F$ and final-value calculations are also verified in \cref{local:app-chain}.

\begin{samepage}
\subsection{Proof of Theorem~\ref{thm:nonanytime}}
\label{sec:global}
\label{sec:silver}

In this section, we sketch the proof of \cref{thm:nonanytime}.
The main idea of our proof follows \citet[Section~3 and Appendix~C]{JungChoYun2026}.
In our lower bound \eqref{eq:transfer}, the denominator $c(a+s_i)+\epsilon b_i$ replaces their $2(2+s_i)$.
The parameter choice in \eqref{global:eq-parameter-choice} allows $c>1$ to be arbitrarily close to $1$, and the resulting smaller coefficient of $s_i$ yields a stronger lower bound approaching the silver rate.
To handle the additional $\epsilon b_i$ terms, we retain a correction term in the splitting inequality of \cref{global:lem-gap-splitting}. This is the main difference between our proof technique and the one used in \citet[Section~3]{JungChoYun2026}.
The full details are given in \cref{app:global}.
\par
\end{samepage}

Throughout this subsection, fix $0<\epsilon<1/2$ and $\gamma>0$, set $c:=\sqrt{1+2\gamma+\epsilon^2}$, and choose the constants in \cref{thm:transfer} with $a\ge2B_0+2$ and $a\ge2$.

For a fixed schedule $h\in[0,\infty)^n$, we optimize the lower bound \eqref{eq:transfer} over the checkpoints and then seek a bound uniform over all schedules.
Using the notation $T,b_i,s_i$ from \eqref{local:checkpoint-definitions}, define
\begin{equation}
  \Psi_\lambda(T;h):=(\lambda+s_{k+1})
  \prod_{i=1}^k\left(\epsilon+\frac{c(a+s_i)}{b_i}\right),
  \label{global:eq-checkpoint-expression}
\end{equation}
\begin{equation*}
  V_\lambda(h):=\min_{T\subseteq\{1,\ldots,n\}}\Psi_\lambda(T;h),
  \qquad
  U_n(\lambda):=\sup_{h\in[0,\infty)^n}V_\lambda(h).
\end{equation*}
Here $\lambda\ge a$ is a hyperparameter, chosen later.
For the empty set, $\Psi_\lambda(\varnothing;h)=\lambda+\sum_{t=1}^n h_t$, and $U_0(\lambda)=\lambda$.\footnote{We set $\Psi_\lambda(T;h)=+\infty$ if $T$ selects a zero step, and interpret a minimum over no indices as $+\infty$.}
The calculation in \cref{lem:cost-lower-bound} gives
\begin{equation}
  \mathcal R_n(h)\ge\frac{\lambda-1}{V_\lambda(h)^2}
  \ge\frac{\lambda-1}{U_n(\lambda)^2}.
  \label{global:eq-cost-lower-bound}
\end{equation}
Thus, minimizing $\Psi_\lambda$ gives a lower bound for a fixed schedule, and bounding $U_n(\lambda)$ makes it uniform over schedules.
The following identity reduces the problem to shorter sequences.

\begin{lemma}[Exact recursive decomposition]
\label{global:lem-value-decomposition}
For every $n\ge1$, $h\in[0,\infty)^n$, and $\lambda\ge a$,
\begin{equation}
  V_\lambda(h)
  =\min\left\{
    \lambda+\sum_{j=1}^n h_j,\;
    \min_{\substack{1\le t\le n\\ h_t>0}}
    \frac{c}{h_t}
    V_{a+\epsilon h_t/c}(h_{1:t-1})V_\lambda(h_{t+1:n})
  \right\}.\footnote{Compared with \citet[Lemma~3.1]{JungChoYun2026}, the coefficient $2$ becomes $c$, and the left parameter $2$ becomes $a+\epsilon h_t/c$.
The proof uses the same factorization.}
  \label{global:eq-value-decomposition}
\end{equation}
Here $h_{i:j}:=(h_i,\ldots,h_j)$ and $V_\mu(\varnothing)=\mu$.
\end{lemma}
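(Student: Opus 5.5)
We prove \eqref{global:eq-value-decomposition} by splitting the minimization over checkpoint sets $T$ according to whether $T$ is empty, and if not, according to its \emph{first} checkpoint $t=t_1$. The empty set contributes exactly $\Psi_\lambda(\varnothing;h)=\lambda+\sum_{j=1}^n h_j$, which is the first entry of the outer minimum. Sets selecting a zero step have $\Psi_\lambda=+\infty$ and can be discarded, which is why the inner minimum ranges only over $t$ with $h_t>0$. It therefore remains to show that, for each fixed $t$ with $h_t>0$,
\[
\min_{T:\,\min T=t}\Psi_\lambda(T;h)=\frac{c}{h_t}\,V_{a+\epsilon h_t/c}(h_{1:t-1})\,V_\lambda(h_{t+1:n}).
\]

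The key step is a factorization of $\Psi_\lambda$. Rather than the first checkpoint, it is convenient to single out a checkpoint $t=t_j$ and factor around it. For a general $T$ containing $t$, write $T=T'\cup\{t\}\cup T''$ with $T'\subseteq\{1,\dots,t-1\}$ and $T''\subseteq\{t+1,\dots,n\}$ (shifted). The factor for $t$ is
\[
\epsilon+\frac{c(a+s_j)}{h_t}=\frac{c}{h_t}\Bigl(a+\frac{\epsilon h_t}{c}+s_j\Bigr),
\]
where $s_j$ is the total stepsize between the last element of $T'$ and $t$, i.e.\ the ``tail sum'' of $T'$ within $h_{1:t-1}$. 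Hence the factors belonging to $T'$, multiplied by $(a+\epsilon h_t/c+s_j)$, give exactly $\Psi_{a+\epsilon h_t/c}(T';h_{1:t-1})$. The remaining factors (those for checkpoints in $T''$, together with the last factor $\lambda+s_{k+1}$) give $\Psi_\lambda(T'';h_{t+1:n})$. Here the first gap of $T''$ is measured from $t$, which matches its measurement from the start of the subsequence. Altogether,
\[
\Psi_\lambda(T;h)=\frac{c}{h_t}\,\Psi_{a+\epsilon h_t/c}(T';h_{1:t-1})\,\Psi_\lambda(T'';h_{t+1:n}).
\]
In the edge cases the convention $V_\mu(\varnothing)=\mu$ matches: for $t=1$ the left piece is empty with $s_1=0$, giving the value $a+\epsilon h_t/c$, and for $t=n$ the right piece is empty with $s_{k+1}=0$, giving $\lambda$.

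With this factorization the decomposition follows by a two-sided argument. The map $T\mapsto(T',T'')$ is a bijection between sets containing $t$ and pairs of subsets. All factors are nonnegative (indeed positive, and $+\infty$ exactly when a zero step is selected), and $c/h_t>0$. So minimizing over sets containing $t$ equals $\frac{c}{h_t}V_{a+\epsilon h_t/c}(h_{1:t-1})V_\lambda(h_{t+1:n})$, since the minimum of a product of independent nonnegative quantities is the product of the minima. Every nonempty $T$ contains some $t$, so taking the minimum over $t$ covers all of them; conversely, each term is attained by some $T$. This gives equality in \eqref{global:eq-value-decomposition}. This argument uses an arbitrary $t\in T$, so no restriction to the first checkpoint is actually needed; the overlap between different choices of $t$ is harmless for a minimum.

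The main obstacle is only the careful bookkeeping of the gap sums, in particular checking that the $\epsilon$ term is absorbed into the left parameter exactly as $a+\epsilon h_t/c$. One should also note that $V_\mu$ is defined for the parameter $\mu=a+\epsilon h_t/c$, which may differ from $\lambda$. The standing hypothesis $\lambda\ge a$ is not used in the identity itself; $V_\mu$ is well defined for any $\mu>0$.
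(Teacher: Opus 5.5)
Your proof is correct and is essentially the paper's argument: factor $\Psi_\lambda(T;h)=\frac{c}{h_t}\Psi_{a+\epsilon h_t/c}(T';h_{1:t-1})\Psi_\lambda(T'';h_{t+1:n})$ at any selected positive step $t\in T$, minimize independently over the two sides, then minimize over $t$ and include the empty set. One slip needs fixing. Your first paragraph reduces the lemma to a per-$t$ identity for $\min_{T:\,\min T=t}\Psi_\lambda(T;h)$, and that identity is false in general. Requiring $t$ to be the first checkpoint forces $T'=\varnothing$, so you get $a+\epsilon h_t/c+\sum_{j<t}h_j$ in place of $V_{a+\epsilon h_t/c}(h_{1:t-1})$. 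The per-$t$ identity therefore has to be stated over all sets containing $t$, which is what you in fact prove, and the first-checkpoint framing should be dropped rather than described as ``not needed.''
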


\paragraph{Proof sketch of \cref{global:lem-value-decomposition}.}
\pushQED{\qed}
Splitting a nonempty checkpoint set at $t\in T$ with $h_t>0$ gives
\[
  \Psi_\lambda(T;h)
  =\frac{c}{h_t}\,
    \Psi_{a+\epsilon h_t/c}(T_L;h_{1:t-1})
    \Psi_\lambda(T_R;h_{t+1:n}),
\]
where $T_L$ and $T_R$ are the checkpoints before and after $t$, reindexed within each subsequence.
Minimize independently over the two sets, then over $t$, and include the empty set.
The factor calculation is expanded in \cref{global:app-value-decomposition}.
\popQED\\

We next bound $U_n(\lambda)$.
In \eqref{global:eq-value-decomposition}, the additional $\epsilon$ term shifts the parameter of the subproblem on $h_{1:t-1}$ from $a$ to $a+\epsilon h_t/c$.
To account for it, fix $\nu\in[1/2,1)$ and set
\begin{equation}
  \kappa:=\left(\frac{\epsilon}{c-\epsilon}\right)^\nu.
  \label{global:eq-kappa-choice}
\end{equation}
Consider the following scalar condition: whenever $x,y\ge a$ and $B>0$ satisfy $w=x+y+B-a$ and $wB=(c-\epsilon)xy$, we have
\begin{equation}
  w^\nu+\kappa B^\nu\le x^\nu+y^\nu.
  \label{global:eq-scalar-hypothesis}
\end{equation}
Under this condition, we obtain the following bound on $U_n(\lambda)$.\footnote{\cref{thm:weighted-recursion} has the same form as \citet[Lemma~3.2]{JungChoYun2026}.
Here the scalar hypothesis has the additional term $\kappa B^\nu$.}

\begin{lemma}
\label{thm:weighted-recursion}
Let $\nu\in[1/2,1)$.
If the scalar hypothesis \eqref{global:eq-scalar-hypothesis} holds with \eqref{global:eq-kappa-choice}, then, for every $n\ge0$ and $\lambda\ge a$,
\begin{equation}
  U_n(\lambda)^\nu\le a^\nu n+\lambda^\nu.
  \label{eq:cost-growth}
\end{equation}
\end{lemma}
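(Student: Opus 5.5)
The plan is strong induction on $n$, applying \cref{global:lem-value-decomposition} at a well-chosen split index and closing each step with the scalar hypothesis \eqref{global:eq-scalar-hypothesis}.

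\emph{Base case and reduction.} For $n=0$ we have $U_0(\lambda)=\lambda$, so \eqref{eq:cost-growth} is an equality. For $n\ge1$, fix $h\in[0,\infty)^n$ and $\lambda\ge a$, and put $W:=(a^\nu n+\lambda^\nu)^{1/\nu}$. It suffices to show $V_\lambda(h)\le W$, and I would argue by contradiction. Suppose $V_\lambda(h)>W$. By \eqref{global:eq-value-decomposition}, this gives $\lambda+S>W$, where $S=\sum_j h_j$. It also gives $(c/h_t)V_{a+\epsilon h_t/c}(h_{1:t-1})V_\lambda(h_{t+1:n})>W$ for every $t$ with $h_t>0$. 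The induction hypothesis applies to both shorter pieces because both parameters are at least $a$. Together with the trivial bound from the empty checkpoint set, it gives
\[
V_{a+\epsilon h_t/c}(h_{1:t-1})\le \min\Bigl\{a+\tfrac{\epsilon h_t}{c}+P_t,\ \bigl(a^\nu(t-1)+(a+\tfrac{\epsilon h_t}{c})^\nu\bigr)^{1/\nu}\Bigr\},
\]
where $P_t=\sum_{j<t}h_j$. The analogous bound holds for the right piece, with $\lambda+Q_t$ and $(a^\nu(n-t)+\lambda^\nu)^{1/\nu}$, where $Q_t=\sum_{j>t}h_j$. Since $\nu\le1$, subadditivity gives $(a+\epsilon h_t/c)^\nu\le a^\nu+(\epsilon h_t/c)^\nu$. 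The $\epsilon$-shift therefore costs at most an additive $(\epsilon h_t/c)^\nu$ in the $\nu$-th power.

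\emph{Choice of split and the scalar step.} I would choose $t$ by a crossing argument. Let $t$ be the first index at which the prefix sum $a+P_t+h_t$ exceeds the inductive prefix bound $(a^\nu t)^{1/\nu}$, or the analogous balancing index. Then on the left the empty-set bound is the smaller one up to the crossing, and the inductive bound controls the rest. Set $x$ and $y$ to be the resulting left and right bounds with the $\epsilon$-shift stripped off, so that $x,y\ge a$. Let $B$ be the rescaled step with $w:=x+y+B-a$ matching $\lambda+S$, using $\lambda+S=(a+P_t)+(\lambda+Q_t)+h_t-a$.

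The intended bookkeeping is this. The contradiction hypothesis on the product, rewritten with the shift absorbed, should force $wB\le(c-\epsilon)xy$. Here the factor $c-\epsilon$ comes from moving the $\epsilon h_t$ part of the denominator to the other side, exactly as in the original $\epsilon+c(a+s_i)/b_i$. After adjusting $B$ downward to reach equality, which only helps, \eqref{global:eq-scalar-hypothesis} gives $w^\nu+\kappa B^\nu\le x^\nu+y^\nu$. The choice $\kappa=(\epsilon/(c-\epsilon))^\nu$ in \eqref{global:eq-kappa-choice} is made precisely so that $\kappa B^\nu$ equals the shift penalty $(\epsilon h_t/c)^\nu$, once $B$ is identified with $(c-\epsilon)h_t/c$ up to the normalization. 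The penalty is therefore cancelled. What remains is $w^\nu\le a^\nu(t-1)+a^\nu+a^\nu(n-t)+\lambda^\nu=W^\nu$, which contradicts $\lambda+S>W$.

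\emph{Main obstacle.} The delicate step is the matching in the second paragraph. The two-sided minimum (empty set versus inductive bound) on each side must be combined with the crossing choice of $t$ so that a single triple $(x,y,B)$ satisfies both the additive identity $w=x+y+B-a$ and the multiplicative constraint. The scalar hypothesis is stated only on the equality curve $wB=(c-\epsilon)xy$, so I would first prove a monotonicity reduction. Namely, if $wB\le(c-\epsilon)xy$ with $w=x+y+B-a$, then decreasing $B$, or equivalently increasing $x$ or $y$ along the constraint, preserves the inequality direction needed. I would try this first by viewing $w^\nu+\kappa B^\nu-x^\nu-y^\nu$ as a function along the curve and checking the sign of its derivative.
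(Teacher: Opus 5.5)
\textbf{Verdict: genuine gap.} Your single split at an arbitrary index cannot close the induction for a general schedule $h$, and the failure sits exactly at the step you call delicate.

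\emph{Where it breaks.} With $B=(c-\epsilon)h_t/c$ (needed so that $\kappa B^\nu=(\epsilon h_t/c)^\nu$), the identity $w=x+y+B-a=\lambda+S$ forces
$x+y=(a+\epsilon h_t/c+P_t)+(\lambda+Q_t)$.
This is the sum of the \emph{empty-set values} of the two subproblems. The induction hypothesis only bounds $V_{a+\epsilon h_t/c}(h_{1:t-1})$ and $V_\lambda(h_{t+1:n})$. These are minima over checkpoint sets and can be far smaller than the empty-set values. So your closing estimates $x^\nu\le a^\nu(t-1)+(a+\epsilon h_t/c)^\nu$ and $y^\nu\le a^\nu(n-t)+\lambda^\nu$ are not available.

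The multiplicative side fails as well. With these $x,y,B,w$, the inequality $wB\le(c-\epsilon)xy$ is exactly $\Psi_\lambda(\varnothing;h)\le\Psi_\lambda(\{t\};h)$. Your contradiction hypothesis $V_\lambda(h)>W$ does not imply this.

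If it fails, you must pass to the crossing point $B^*$ on the constraint curve. Since $(x+y+B-a)B$ increases in $B$, this crossing satisfies $B^*<B$. (For the same reason, reaching equality from $wB<(c-\epsilon)xy$ means increasing $B$, not decreasing it.) The bonus $\kappa (B^*)^\nu$ is then strictly smaller than the shift penalty $(\epsilon h_t/c)^\nu$, so the cancellation breaks.

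The scalar hypothesis therefore closes the argument only when two things hold at once: $\{t\}$ ties with $\varnothing$, and the empty set is optimal on both sides. Neither holds for a general $h$, nor for a crossing index chosen by comparing $a+P_t+h_t$ with $(a^\nu t)^{1/\nu}$.

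\emph{The missing idea: work at an extremal schedule and exploit tightness.}
\begin{enumerate}
\item The paper first shows, in \cref{global:lem-extremal-schedule}, that $U_n(\lambda)$ is attained by a positive schedule. This uses compactness and the log-submodularity of \cref{global:lem-log-submodularity}. At this schedule both $\varnothing$ and some nonempty set $T$ minimize $\Psi_\lambda$.
\item Tightness of $T$ gives the gap identities \eqref{global:eq-gap-identities}. On every block between consecutive checkpoints, the empty-set value equals $V$. Only then does the induction hypothesis control $a+s_i+\epsilon b_i/c$ and $\lambda+s_{k+1}$.
\item $T$ may contain many checkpoints, so one still needs the splitting inequality over all of $T$, namely \cref{global:lem-gap-splitting}. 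It is proved through \cref{lem:weighted-splitting}, in the auxiliary variables $g_i=s_i+\epsilon b_i/c$ and $B_i=(c-\epsilon)b_i/c$, by an inner induction on $k$.
\item In that inner induction one maximizes $J(B)=W^\nu+\kappa\sum_i B_i^\nu$ over the region where the empty set is optimal. A perturbation argument shows that at a maximizer the inclusion-minimal tight set is a singleton $\{r\}$.
\item Only at this singleton is the scalar hypothesis applied, with exact equality $wB_r=(c-\epsilon)xy$ and with the empty set optimal in both side problems.
\end{enumerate}
None of this structure is available from a single index picked for an arbitrary schedule.
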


\paragraph{Proof sketch of \cref{thm:weighted-recursion}.}
\pushQED{\qed}
First, for $n\ge1$, choose a schedule $h$ attaining $U_n(\lambda)$ as guaranteed by \cref{global:lem-extremal-schedule}.
The same lemma gives a nonempty set $T=\{t_1<\cdots<t_k\}$ such that both $T$ and the empty set minimize $\Psi_\lambda$.
Evaluating $\Psi_\lambda$ at these two sets gives $U_n(\lambda)=\lambda+\sum_t h_t=\Psi_\lambda(T;h)$.

The splitting inequality in \cref{global:lem-gap-splitting} adapts the perturbation argument of \citet[Appendix~C.6]{JungChoYun2026} to retain the correction term $\sum_i(\epsilon b_i/c)^\nu$.
This is where \eqref{global:eq-scalar-hypothesis} is needed.
Applying that lemma gives
\begin{equation}
  U_n(\lambda)^\nu+\sum_{i=1}^k(\epsilon b_i/c)^\nu
  \le(\lambda+s_{k+1})^\nu+\sum_{i=1}^k(a+s_i+\epsilon b_i/c)^\nu.
  \label{global:eq-gap-splitting}
\end{equation}
The identities \eqref{global:eq-gap-identities} in \cref{global:lem-gap-splitting} show that $a+s_i+\epsilon b_i/c$ equals $V_{a+\epsilon b_i/c}$ evaluated on the intervening stepsizes between checkpoints $i-1$ and $i$.
Similarly, $\lambda+s_{k+1}$ equals $V_\lambda$ evaluated on the stepsizes after the last checkpoint.

We now induct on $n$, simultaneously for all $\lambda\ge a$, starting from $U_0(\lambda)=\lambda$.
Each intervening sequence has fewer than $n$ steps, and together they contain $n-k$ steps.
By definition, $V_\mu$ evaluated on a sequence of $m$ stepsizes is at most $U_m(\mu)$.
Applying the induction hypothesis gives
\[
\begin{aligned}
  U_n(\lambda)^\nu+\sum_{i=1}^k(\epsilon b_i/c)^\nu
  &\le a^\nu(n-k)+\lambda^\nu+\sum_{i=1}^k(a+\epsilon b_i/c)^\nu\\
  &\le a^\nu n+\lambda^\nu+\sum_{i=1}^k(\epsilon b_i/c)^\nu,
\end{aligned}
\]
where the last inequality uses $(a+\epsilon b_i/c)^\nu\le a^\nu+(\epsilon b_i/c)^\nu$.
Cancelling the identical sums proves \eqref{eq:cost-growth}.
\popQED\\

It remains to verify \eqref{global:eq-scalar-hypothesis} for $p=1/\nu$ close to $\psil$.
In the limiting case $c=1$, $\epsilon=0$, the silver inequality
\begin{equation}
  z^{\psil}+(1-z)^{\psil}+[z(1-z)]^{\psil}\le1,
  \qquad z\in[0,1],
  \label{global:eq-silver-inequality}
\end{equation}
gives this condition at $p=\psil$, as shown in \cref{global:app-scalar}.
Writing $\rho=1+\sqrt2$, the identity $\rho^2=2\rho+1$ makes the inequality tight at the balanced split $z=1/2$.
Its proof follows the concavity argument of \citet[Lemma~C.4]{JungChoYun2026}, with coefficient $1$ instead of $2$.
For $\epsilon>0$, the next lemma quantifies the resulting increase in the exponent.
Write $t_+=\max\{t,0\}$.

\begin{lemma}
\label{global:lem-scalar-condition}
Let \(p\in[\psil,3/2]\), \(\nu=1/p\), \(0<c-\epsilon\le2\), and \(0<\epsilon<1/2\).
Suppose that \(\tau:=\epsilon^{1/p}\le1/10\) and
\begin{equation}
\label{global:eq-scalar-sufficient}
 p-\psil\ge(c-\epsilon-1)_++6\epsilon^{1/p}.
\end{equation}
Then the scalar hypothesis \eqref{global:eq-scalar-hypothesis} holds.
\end{lemma}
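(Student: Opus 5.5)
We will reduce the scalar hypothesis \eqref{global:eq-scalar-hypothesis} to a one-variable inequality of silver type and then treat $c-\epsilon-1$ and $\epsilon$ as perturbations of the silver inequality \eqref{global:eq-silver-inequality}.

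\emph{Normalization.} Fix $x,y\ge a$ and $B>0$ with $w=x+y+B-a$ and $wB=(c-\epsilon)xy$. Set $\beta:=c-\epsilon\in(0,2]$, $s:=x+y$ and $z:=x/s$. The relation $w=s+B-a$ with $w(w-s+a)=\beta xy$ gives $w$ as the positive root of a quadratic. Dropping $a>0$ only increases $w$, since $w(w-s)\le\beta xy$, so $w\le\bar w$, where $\bar w$ solves $\bar w(\bar w-s)=\beta s^2 z(1-z)$. Because $B=\beta xy/w$ decreases in $w$, replacing $w$ by $\bar w$ changes the left side of \eqref{global:eq-scalar-hypothesis} by a monotone amount that we must control. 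We plan to bound $w^\nu$ by $\bar w^\nu$. For $B$ we use $B\le \beta xy/s\le\beta s z(1-z)$, so the correction term satisfies $\kappa B^\nu\le \kappa\beta^\nu s^\nu[z(1-z)]^\nu$. Dividing by $s^\nu$, it then suffices to prove, for $z\in[0,1]$ with $u:=\bar w/s=\tfrac12\bigl(1+\sqrt{1+4\beta z(1-z)}\bigr)$,
\[
u^\nu+\kappa\beta^\nu[z(1-z)]^\nu\le z^\nu+(1-z)^\nu .
\]
In the limiting case $\beta=1$, $\kappa=0$, raising to the power $p$ recovers the silver condition at $p=\psil$, as in \cref{global:app-scalar}.

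\emph{Perturbation in $\beta$ and $p$.} Write $g_p(z):=z^{1/p}+(1-z)^{1/p}$. We will first show that the $\beta$-version $u_\beta(z)^{1/p}\le g_p(z)$ holds with slack once $p-\psil\ge(\beta-1)_+$. Following the concavity argument cited for \eqref{global:eq-silver-inequality}, we reduce to showing that $\phi(z):=g_p(z)^p-u_\beta(z)$ is nonnegative, with the balanced point $z=1/2$ as the critical case. At $z=1/2$ the condition becomes $2^{p-1}\ge \tfrac12(1+\sqrt{1+\beta})$. The derivative of $\log_2$ of the right side with respect to $\beta$ at $\beta=1$ equals $1/(2\rho\sqrt2\ln 2)<1$, so a linear margin $p-\psil\ge\beta-1$ suffices. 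This is where $(c-\epsilon-1)_+$ comes from. We also need a quantitative slack: a lower bound of order $(p-\psil-(\beta-1)_+)\,z(1-z)$ on $g_p(z)-u^{1/p}$, uniform for $p\le 3/2$. We would get it from the gain in $2^{p-1}$, which has derivative at least $\ln2\cdot2^{\psil-1}$, together with the endpoint behaviour near $z=0$, where both sides are $1+O(z^{1/p})$ versus $1+O(z)$.

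\emph{Absorbing $\kappa$.} Since $\kappa\beta^\nu\le(\epsilon)^{\nu}\cdot O(1)=O(\tau)$ and $[z(1-z)]^\nu\le [z(1-z)]^{2/3}$, the remaining task is to show that the slack dominates $C\tau[z(1-z)]^{\nu}$ when $p-\psil-(\beta-1)_+\ge6\tau$. The difficulty is that near the endpoints $[z(1-z)]^\nu$ is much larger than $z(1-z)$. There, however, the slack is of order $z^{1/p}$ rather than $z$, because $g_p(z)-1\approx z^{1/p}$ while $u^{1/p}-1=O(z)$. So near the endpoints the slack is $\gtrsim z^\nu$, which beats $\kappa z^\nu$ as soon as $\kappa\beta^\nu$ is small, using $\tau\le1/10$. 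In the middle range a direct comparison with constant $6$ is needed.

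\emph{Main obstacle.} We expect the hardest step to be making the middle-range slack explicit with the constant $6$. We would first try to prove that $\partial_p[g_p(z)-u^{1/p}]$ is at least $c_0[z(1-z)]^\nu$ for an explicit $c_0$, using convexity and monotonicity in $p$. We would then integrate from the $\beta$-threshold exponent up to $p$, and check numerically safe bounds on $[1/10]$-sized parameters.
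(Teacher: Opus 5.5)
\textbf{Verdict: genuine gap.} As written this is a plan rather than a proof, and the step you leave open is essentially the whole content of the lemma. There are three concrete problems.

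\emph{First, the bound $B\le\beta xy/s$ is false.} It is equivalent to $w\ge s$, i.e.\ to $B\ge a$. For $x=y=a$ and $\beta=1$ one gets $w=\tfrac{1+\sqrt5}{2}a<s$ and $B=\tfrac{\sqrt5-1}{2}a>\tfrac a2=\beta xy/s$. This is repairable via $w>s-a\ge s/2$, at the cost of a factor $2^\nu$ in the correction term.

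\emph{Second, the $\beta$-version $u_\beta(z)^{1/p}\le g_p(z)$ is only checked at $z=1/2$.} You assert, but do not show, that $z=1/2$ is the critical point of $g_p(z)^p-u_\beta(z)$ when $\beta\ne1$ or $p\ne\psil$. The concavity argument behind \eqref{global:eq-silver-inequality} concerns a different function in different variables.

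\emph{Third, and decisively, the quantitative estimate is left open.} You need the slack to dominate $\tau[z(1-z)]^{1/p}$ whenever $p-\psil-(\beta-1)_+\ge6\tau$, and you leave this as your ``main obstacle.'' In your parametrization $z=x/(x+y)$, the correction $\tau[z(1-z)]^{1/p}$ and the natural slack have different orders near $z=0,1$. That mismatch is what forces the endpoint/middle case split, which you do not carry out.

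The missing idea is to normalize by $w$ rather than by $x+y$, which makes the correction bilinear.
\begin{itemize}
\item Set $u=x/w$ and $v=y/w$. The relations give $u+v+\beta uv=1+a/w\ge1$. Since $\kappa\beta^\nu=\tau$, the target becomes $X+Y-\tau XY\ge1$ with $X=u^{1/p}$ and $Y=v^{1/p}$.
\item If the target fails, then $Y<A:=(1-X)/(1-\tau X)$. Monotonicity in $Y$ gives $1\le X^p+Y^p+\beta X^pY^p<X^p+A^p+\beta X^pA^p$.
\item Let $G_p(X)=X^p+(1-X)^p+\beta[X(1-X)]^p$. Because $A-(1-X)=\tau X(1-X)/(1-\tau X)$, the mean value theorem bounds the excess of the right side over $G_p(X)$ by $p\tau(1+\beta)X(1-X)/(1-\tau)\le5\tau X(1-X)$. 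This is of the same order $X(1-X)$ as the available slack.
\item The slack follows globally, with no critical-point analysis. The silver inequality together with $[X(1-X)]^{\psil}\le X(1-X)$ gives $G_{\psil}(X)\le1+(\beta-1)_+X(1-X)$.
\item The bounds $-\log X\ge1-X$ and $-\log(1-X)\ge X$ give $-\partial_pG_p(X)\ge X(1-X)$. Integrating in $p$ yields $G_p(X)\le1-[p-\psil-(\beta-1)_+]X(1-X)$.
\item By \eqref{global:eq-scalar-sufficient}, $X^p+A^p+\beta X^pA^p\le1$, contradicting the constraint.
\end{itemize}

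Your second issue could also be patched in your own variables by a similar pointwise comparison: $\partial_\beta u_\beta(z)\le z(1-z)$ against $\partial_p[g_p(z)^p]\ge\tfrac2p z(1-z)$. The third issue would still need a real argument.
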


The proof is provided in \cref{global:app-scalar}.

\paragraph{Proof of Theorem~\ref{thm:nonanytime}.}
\pushQED{\qed}
Fix $0<\eta\le1/10$ and choose
\begin{equation}
\label{global:eq-parameter-choice}
 \begin{gathered}
 p:=\psil+\eta,\qquad \gamma:=\frac{\eta}{4},\qquad \epsilon:=\left(\frac{\eta}{16}\right)^p,\\
 c:=\sqrt{1+2\gamma+\epsilon^2}.
 \end{gathered}
\end{equation}
These parameters satisfy \cref{global:lem-scalar-condition}, as verified in \cref{global:app-parameters}.

Set $\nu=1/p$ and, for $n\ge1$, choose $\lambda=an^p$.
By \cref{thm:weighted-recursion}, $V_\lambda(h)\le U_n(\lambda)\le2^pan^p$.
Since $a\ge2$, we have $\lambda-1\ge an^p/2$, so \eqref{global:eq-cost-lower-bound} yields
\begin{equation}
  \mathcal R_n(h)\ge\frac{1}{2^{2p+1}a}\,n^{-p}
  \qquad\text{for every }h\in[0,\infty)^n.
  \label{eq:fixed-p-rate}
\end{equation}
The bounds on the construction constants in \cref{global:app-parameters} give
\begin{equation}
\label{eq:parameter-growth}
 \log a=O\!\left(\eta^{-1}\log(1/\eta)\right), \qquad 0<\eta\le1/10,
\end{equation}
with an absolute implied constant.
Combining this estimate with \eqref{eq:fixed-p-rate} yields
\begin{equation*}
 \mathcal R_n(h)\ge n^{-\psil} \exp\!\left\{-\eta\log n-O\!\left(\eta^{-1}\log(1/\eta)\right)\right\}.
\end{equation*}
The implied constant is absolute and the estimate is uniform over $h$.
For sufficiently large $n$, take
\begin{equation*}
 \eta:=\sqrt{\frac{\log\log n}{\log n}}\le\frac{1}{10}.
\end{equation*}
Both terms in the exponent are then $O(\sqrt{\log n\,\log\log n})$.
Thus, for an absolute constant $C>0$ and every schedule $h\in[0,\infty)^n$,
\begin{equation*}
 \mathcal R_n(h)\ge n^{-\psil} \exp\!\left\{-C\sqrt{\log n\,\log\log n}\right\} =n^{-\left(\psil+C\sqrt{\frac{\log\log n}{\log n}}\right)}.
\end{equation*}
This proves \cref{thm:nonanytime}.
\popQED

\section{(Almost) Tight Anytime Lower Bound}
\label{sec:anytime}

We now use the same hard functions to prove \cref{thm:anytime}.
Following the record-time approach of \citet[Section~4.1]{TsaiFatkhullinZhangHe2026}, also used by \citet[Section~4]{JungChoYun2026}, we compare the total stepsize with the largest step seen so far.

Fix $p\in(\psil,\psil+1/10]$, set $\nu:=1/p$, and use the parameters $a,c,\epsilon,B_0$ from \cref{global:app-parameters}, with $\eta=p-\psil$.
For each prefix $H_n=(h_1,\ldots,h_n)$, write
\[
    S_n:=\sum_{t=1}^n h_t,
    \qquad M_n:=\max_{1\le t\le n}h_t,
    \qquad r_n:=\mathcal R_n(H_n).
\]
We call $n$ a \emph{record time} if $h_n=M_n$.
The following estimate is the main step.

\begin{lemma}
\label{any:lem:record-sum}
For every $p\in(\psil,\psil+1/10]$, there are constants $C_p<\infty$ and $r_{0,p}>0$ such that every record time $n$ with $M_n\ge B_0$ and $r_n\le r_{0,p}$ satisfies
\begin{equation}
    S_n\le C_p nM_n^{1-\nu}.
    \label{any:eq:record-sum}
\end{equation}
\end{lemma}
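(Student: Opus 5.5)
The plan is to apply \cref{thm:transfer} to the prefix $H_n$ with a single checkpoint at the record time itself, so $k=1$, $t_1=n$, $b_1=h_n=M_n\ge B_0$, $s_1=S_n-M_n$ and $s_2=0$. This gives
\[
  r_n\ge\frac14\left[\frac{M_n}{c(a+S_n-M_n)+\epsilon M_n}\right]^2 .
\]
This alone only controls $S_n$ by $M_n/\sqrt{r_n}$, which is too weak. So I would instead combine the checkpoint at $n$ with an optimal checkpoint set on the prefix $h_{1:n-1}$, using the machinery of \cref{sec:global}.

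The first step is to apply \cref{global:lem-value-decomposition} with $t=n$. Together with $V_\lambda(\varnothing)=\lambda$, it gives
\[
  V_\lambda(H_n)\le\frac{c\lambda}{M_n}\,V_{a+\epsilon M_n/c}(h_{1:n-1}).
\]
The second step is to bound the prefix term. By \cref{thm:weighted-recursion} (valid since the parameters from \cref{global:app-parameters} satisfy \cref{global:lem-scalar-condition}),
\[
  V_\mu(h_{1:n-1})\le\bigl(a^\nu n+\mu^\nu\bigr)^p,\qquad \mu=a+\epsilon M_n/c .
\]
The third step is to feed this into \eqref{global:eq-cost-lower-bound} with $\lambda$ fixed, say $\lambda=a$:
\[
  r_n\ge\frac{(a-1)M_n^2}{c^2a^2\bigl(a^\nu n+\mu^\nu\bigr)^{2p}} .
\]
The smallness assumption $r_n\le r_{0,p}$ then forces $a^\nu n+\mu^\nu\gtrsim M_n^{1/p}$. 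Since $\mu^\nu\lesssim\epsilon^\nu M_n^\nu$ with $\epsilon^\nu$ small, choosing $r_{0,p}$ small relative to $\epsilon$ forces $n\gtrsim M_n^{\nu}$, which is the upper bound on $M_n$ we want, namely $M_n\lesssim n^p$.

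This alone does not bound $S_n$, so I would need a separate argument that converts the record property and smallness into $S_n\lesssim nM_n^{1-\nu}$. The idea is to split the steps by size. For each $t\le n$ we have $h_t\le M_n$, so it suffices to show that the average step is at most $C_pM_n^{1-\nu}$.

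For this I would use the empty-checkpoint versus single-checkpoint comparison on the tail. Taking checkpoint $n$ and controlling the prefix $h_{1:n-1}$ via \cref{thm:weighted-recursion}, with the no-checkpoint option $\lambda+S$ in the minimum, gives
\[
  \sqrt{1/r_n}\gtrsim \min\Bigl\{S_n,\ \tfrac{c}{M_n}\bigl(a^\nu n+\mu^\nu\bigr)^p\Bigr\}.
\]
This is not quite sufficient either. More usefully, I would apply the splitting inequality \cref{global:lem-gap-splitting} to an extremal set for $H_n$. That inequality bounds $(\lambda+S_n)^\nu$ by $\lambda^\nu$ plus a sum over the checkpoints, and each term is at most $(a+s_i+\epsilon b_i/c)^\nu$. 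Heuristically, the equality $U_n=\lambda+\sum h$ at extremal schedules says the total stepsize is controlled when the error is small. Concretely, I would aim to prove
\[
  \Bigl(\lambda+S_n\Bigr)^\nu\le a^\nu n+\lambda^\nu+O\bigl((\epsilon M_n)^\nu\bigr)
\]
whenever $V_\lambda(H_n)=\lambda+S_n$, i.e.\ when no checkpoint beats the empty set. If instead some checkpoint set beats it, smallness of $r_n$ still lets us use $V_\lambda\ge\sqrt{(\lambda-1)/r_n}$ to conclude.

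Finally, I would choose $\lambda\asymp M_n$. Then, expanding $(\lambda+S_n)^\nu-\lambda^\nu\ge\nu S_n\lambda^{\nu-1}$ when $S_n\lesssim\lambda$, or $\gtrsim S_n^\nu$ otherwise, the case $S_n\lesssim M_n$ gives $S_n\lesssim M_n^{1-\nu}\bigl(a^\nu n+(\epsilon M_n)^\nu\bigr)\lesssim nM_n^{1-\nu}$, using $M_n^\nu\lesssim n$ from the earlier step. In the case $S_n\gtrsim M_n$ we get $S_n\lesssim n^p\lesssim n M_n^{1-\nu}$ only if $M_n\gtrsim n^p$, and the complementary situation has to be excluded via $r_n\le r_{0,p}$.

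The main obstacle is establishing that, under $r_n\le r_{0,p}$, the relevant minimum of $\Psi_\lambda$ is attained by the empty set, or otherwise forces the inequality directly. This is the step where the extremal-schedule structure (\cref{global:lem-extremal-schedule}) is available only for the supremum, not for an arbitrary $H_n$. I would first try replacing the prefix $H_{n-1}$ by the bound $U_{n-1}$ and working purely with the inequality $V_\lambda(H_n)\le\min\{\lambda+S_n,\ \tfrac{c\lambda}{M_n}U_{n-1}(a+\epsilon M_n/c)\}$, with $\lambda$ tuned in terms of $M_n$ and $r_n$.
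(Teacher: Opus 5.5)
You prove the easy half of the lemma but not its main content. Your first half is sound and matches the last step of the paper's proof: \cref{global:lem-value-decomposition} at $t=n$, \cref{thm:weighted-recursion} and \eqref{global:eq-cost-lower-bound} show that $r_n\le r_{0,p}$ forces $M_n^\nu\lesssim n$. But this already gives the lemma whenever $S_n\lesssim M_n$, since then $S_n\lesssim M_n=M_n^\nu M_n^{1-\nu}\lesssim nM_n^{1-\nu}$. So the case you close at the end is the trivial one, and all the content lies in the regime $S_n\gg M_n$, $M_n\ll n^p$. That regime cannot be ``excluded via $r_n\le r_{0,p}$''. Take $N$ unit steps followed by one step of size $B_0$: this gives a record time $n=N+1$ with $M_n=B_0$, $S_n=N+B_0$, and $r_n\to0$. (The unit steps leave error $O(1/N)$ and gradient norm $O(1/N)$, and one bounded step changes the value by $O(B_0^2/N^2)$.) In this regime your proposal has no mechanism that bounds $S_n$ from above:
\begin{itemize}
\item Apart from the single-checkpoint bound $S_n\lesssim M_n/\sqrt{r_n}$, which you rightly discard, your inequalities see $S_n$ only through the empty-set value $\lambda+S_n\ge V_\lambda(H_n)$. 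Combined with $V_\lambda(H_n)\ge\sqrt{(\lambda-1)/r_n}$, this bounds $S_n$ from \emph{below}.
\item Replacing the prefix by $U_{n-1}$ discards the prefix sum altogether.
\item \cref{global:lem-gap-splitting} needs a schedule attaining $U_n(\lambda)$, so it does not apply to an arbitrary $H_n$.
\item When the empty set happens to be optimal, \cref{thm:weighted-recursion} gives only $(\lambda+S_n)^\nu\le a^\nu n+\lambda^\nu$. For $S_n\gtrsim\lambda\asymp M_n$ this is $S_n\lesssim n^p$, which is weaker than $nM_n^{1-\nu}$ exactly when $M_n\ll n^p$.
\item The case where a nonempty checkpoint set beats the empty set, which is the typical one for a fast schedule, has no argument.
\end{itemize}

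The missing idea is an \emph{upper} bound on the prefix cost $V:=V_{a+\epsilon M_n/c}(H_{n-1})$ from a checkpoint set that selects \emph{many} long prefix steps at once. Your discarded single-checkpoint bound is the case where none is selected. This upper bound is played against the lower bound $V\ge M_n/(2c\sqrt{r_n})$. The paper gets that lower bound from \cref{thm:transfer} with the record step appended; your decomposition at $t=n$ gives it up to an $a$-dependent constant.

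Fix $u\in(0,M_n]$ and select all $N(u)$ prefix steps exceeding $u$. Fill each block between them with an optimal checkpoint set, and bound each block by \cref{thm:weighted-recursion} together with $(a+\epsilon b/c)^\nu\le a^\nu+(\epsilon b/c)^\nu$. Each selected step then contributes a factor at most $\epsilon\bigl(1+(ac/\epsilon)^\nu(m_{i-1}+1)u^{-\nu}\bigr)^{1/\nu}$, where $m_{i-1}$ is the length of the preceding block. Hence
\[
  \frac{cV}{\epsilon M_n}\le\epsilon^{N(u)}\exp\Bigl\{\frac{(ac/\epsilon)^\nu}{\nu}\,n\,u^{-\nu}\Bigr\}.
\]
Since $V\ge2\epsilon M_n/c$ once $r_{0,p}\le1/(16c^2)$, this yields $N(u)\le(ac/\epsilon)^\nu n\,u^{-\nu}/(\nu\log(1/\epsilon))$. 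In words, each selected long step multiplies the cost by about $\epsilon<1$, while the block factors multiply to at most $\exp\{O(n\,u^{-\nu})\}$, so a small error forbids many long steps.

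Because $h_t\le M_n$ for $t<n$, the layer-cake identity $\sum_{t<n}h_t=\int_0^{M_n}N(u)\,du$ and $\int_0^{M_n}u^{-\nu}\,du=M_n^{1-\nu}/(1-\nu)$ give $\sum_{t<n}h_t\lesssim nM_n^{1-\nu}$. Your bound $M_n^\nu\lesssim n$ then handles the final step. Without this threshold-counting argument, or an equivalent way of charging the prefix sum to selected long steps, the proposal does not prove the lemma.
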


The proof of \cref{any:lem:record-sum} is given in \cref{any:app-record-sum}. The full proof of \cref{thm:anytime} is in \cref{any:app-main-proof}.

\section{Concluding Remarks}
\label{sec:conclusion}

In this paper, we proved an almost tight lower bound for GD with predetermined nonnegative stepsizes.
Our key innovation is to bend the gradient direction during the local updates. 
Our lower bounds match the upper bounds in both non-anytime and anytime cases, up to subpolynomial terms.

The remaining open questions are:

\begin{enumerate}
\item Can the lower and upper bounds be matched for schedules that may include negative stepsizes?
To our knowledge, the best known lower bounds for such schedules are $\Omega(n^{-1.6342})$ in the non-anytime case and $\Omega(n^{-1.2408})$ in the anytime case, as established in our earlier paper~\citep{YeLiu2026}.
Extending the lower bounds proved here to such schedules remains open, although we believe that our approach can be adapted to handle negative stepsizes.

\item Can the $\sqrt{\log\log n/\log n}$ losses in the exponents of \eqref{eq:main-nonanytime} and \eqref{eq:main-anytime} be reduced or removed?\footnote{We have obtained preliminary improvements by modifying our existing construction to choose gradient vectors along a logarithmic spiral rather than the circular arc in \eqref{local:arc}.}
\end{enumerate}

\section*{Acknowledgement}
We thank Jason Altschuler and Pablo Parrilo for helpful feedback regarding the literature on stepsize-based acceleration for GD.
\section*{AI Disclosure}

We carefully read the paper by \citet{JungChoYun2026} and traced the difference between their exponent $\log_2(1+\sqrt3)$ and the silver exponent $\log_2(1+\sqrt2)$ to the factor $2$ in their local transfer estimate.
Their local Huber component has a fixed gradient direction.
It changes both adjacent coordinates at the same speed, limiting the room above the activation level for the next component.
We wondered whether we could increase this room by bending the local trajectory, gradually turning the descent direction toward coordinate $i+1$ so that the selected step raises it farther above its activation level.

We shared this intuition with ChatGPT-6 Astra.
Through several rounds of substantive interaction and detailed calculations by ChatGPT-6 Astra Ultra, we designed a smooth convex hard function whose local gradient rotates along a circular arc and whose coefficient of $s_i$ approaches $1$.
Combining this construction with a refinement of the recursive analysis in \citet[Section~3]{JungChoYun2026} gave the present result.

\bibliographystyle{plainnat}
\bibliography{ref}

@article{Cauchy1847,
  author  = {Cauchy, Augustin-Louis},
  title   = {M{\'e}thode g{\'e}n{\'e}rale pour la r{\'e}solution des syst{\`e}mes d'{\'e}quations simultan{\'e}es},
  journal = {Comptes Rendus Hebdomadaires des S{\'e}ances de l'Acad{\'e}mie des Sciences},
  volume  = {25},
  pages   = {536--538},
  year    = {1847}
}

@book{NemirovskyYudin1983,
  author    = {Nemirovsky, Arkadii S. and Yudin, David B.},
  title     = {Problem Complexity and Method Efficiency in Optimization},
  series    = {Wiley-Interscience Series in Discrete Mathematics},
  publisher = {John Wiley \& Sons},
  address   = {Chichester},
  year      = {1983},
  isbn      = {0471103454},
  note      = {Translated by E. R. Dawson}
}

@article{Polyak1964,
  author  = {Polyak, Boris T.},
  title   = {Some Methods of Speeding Up the Convergence of Iteration Methods},
  journal = {USSR Computational Mathematics and Mathematical Physics},
  volume  = {4},
  number  = {5},
  pages   = {1--17},
  year    = {1964}
}

@article{Nesterov1983,
  author  = {Nesterov, Yurii E.},
  title   = {A Method of Solving a Convex Programming Problem with Convergence Rate {$O(1/k^2)$}},
  journal = {Soviet Mathematics Doklady},
  volume  = {27},
  number  = {2},
  pages   = {372--376},
  year    = {1983}
}

@book{Nesterov2004,
  author    = {Nesterov, Yurii},
  title     = {Introductory Lectures on Convex Optimization: A Basic Course},
  series    = {Applied Optimization},
  volume    = {87},
  publisher = {Kluwer Academic Publishers},
  year      = {2004}
}

@article{Moreau1965,
  author  = {Moreau, Jean-Jacques},
  title   = {Proximit{\'e} et dualit{\'e} dans un espace hilbertien},
  journal = {Bulletin de la Soci{\'e}t{\'e} Math{\'e}matique de France},
  volume  = {93},
  pages   = {273--299},
  year    = {1965},
  doi     = {10.24033/bsmf.1625}
}

@article{ParikhBoyd2014,
  author  = {Parikh, Neal and Boyd, Stephen},
  title   = {Proximal Algorithms},
  journal = {Foundations and Trends in Optimization},
  volume  = {1},
  number  = {3},
  pages   = {127--239},
  year    = {2014},
  doi     = {10.1561/2400000003}
}

@book{RockafellarWets1998,
  author    = {Rockafellar, R. Tyrrell and Wets, Roger J.-B.},
  title     = {Variational Analysis},
  series    = {Grundlehren der mathematischen Wissenschaften},
  volume    = {317},
  publisher = {Springer-Verlag},
  address   = {Berlin, Heidelberg},
  year      = {1998},
  doi       = {10.1007/978-3-642-02431-3}
}

@article{MaChen2026,
  author  = {Ma, Jianhao and Chen, Yuxin},
  title   = {A Lower Bound for Stepsize-Based Acceleration of Gradient Descent},
  journal = {arXiv preprint arXiv:2608.10418},
  year    = {2026}
}

@misc{Tsai2026,
  author       = {Tsai, Chung-En},
  title        = {An Improved Lower Bound for Non-Anytime Gradient Descent},
  year         = {2026},
  howpublished = {Blog post},
  url          = {https://chungentsai.github.io/gd-lower-bounds.html}
}

@article{YeLiu2026,
  author  = {Ye, Yuhan and Liu, Kaizhao},
  title   = {Improved Gradient Descent Lower Bounds Beyond {Nesterov}},
  journal = {arXiv preprint arXiv:2609.02855},
  year    = {2026}
}

@article{JungChoYun2026,
  author  = {Jung, Minchan and Cho, Hanseul and Yun, Chulhee},
  title   = {Stronger Lower Bounds for (Non-)Anytime Acceleration of Gradient Descent},
  journal = {arXiv preprint arXiv:2609.04032},
  year    = {2026}
}

@article{AltschulerParrilo2025,
  author        = {Altschuler, Jason M. and Parrilo, Pablo A.},
  title         = {Acceleration by Stepsize Hedging: {Silver Stepsize Schedule} for Smooth Convex Optimization},
  journal       = {Mathematical Programming},
  volume        = {213},
  number        = {1--2},
  pages         = {1105--1118},
  year          = {2025}
}

@article{GrimmerShuWang2025Composing,
  author        = {Grimmer, Benjamin and Shu, Kevin and Wang, Alex L.},
  title         = {Composing Optimized Stepsize Schedules for Gradient Descent},
  journal       = {Mathematics of Operations Research},
  year          = {2025},
  note          = {Articles in Advance},
  url           = {https://pubsonline.informs.org/doi/10.1287/moor.2024.0764}
}

@article{WangMaYangZhou2024,
  author  = {Wang, Bofan and Ma, Shiqian and Yang, Junfeng and Zhou, Danqing},
  title   = {Relaxed Proximal Point Algorithm: Tight Complexity Bounds and Acceleration Without Momentum},
  journal = {INFORMS Journal on Optimization},
  volume  = {8},
  number  = {2},
  pages   = {141--162},
  year    = {2026}
}

@article{TsaiFatkhullinZhangHe2026,
  author  = {Tsai, Chung-En and Fatkhullin, Ilyas and Zhang, Liang and He, Niao},
  title   = {Lower Bounds for Anytime Acceleration of Gradient Descent},
  journal = {arXiv preprint arXiv:2607.02053},
  year    = {2026}
}

@inproceedings{ZhangLeeDuChen2025,
  author    = {Zhang, Zihan and Lee, Jason D. and Du, Simon S. and Chen, Yuxin},
  title     = {Anytime Acceleration of Gradient Descent},
  booktitle = {Proceedings of Thirty Eighth Conference on Learning Theory},
  series    = {Proceedings of Machine Learning Research},
  volume    = {291},
  pages     = {5991--6013},
  year      = {2025},
  publisher = {PMLR}
}

@article{LevitinPolyak1966,
  author  = {Levitin, E. S. and Polyak, B. T.},
  title   = {Constrained Minimization Methods},
  journal = {USSR Computational Mathematics and Mathematical Physics},
  volume  = {6},
  number  = {5},
  pages   = {1--50},
  year    = {1966}
}

@article{Grimmer2024,
  author        = {Grimmer, Benjamin},
  title         = {Provably Faster Gradient Descent via Long Steps},
  journal       = {SIAM Journal on Optimization},
  volume        = {34},
  number        = {3},
  pages         = {2588--2608},
  year          = {2024}
}

@article{GrimmerShuWang2025LongSteps,
  author        = {Grimmer, Benjamin and Shu, Kevin and Wang, Alex L.},
  title         = {Accelerated Objective Gap and Gradient Norm Convergence for Gradient Descent via Long Steps},
  journal       = {INFORMS Journal on Optimization},
  volume        = {7},
  number        = {2},
  pages         = {156--169},
  year          = {2025}
}

@mastersthesis{Altschuler2018,
  author = {Altschuler, Jason},
  title  = {Greed, Hedging, and Acceleration in Convex Optimization},
  school = {Massachusetts Institute of Technology},
  type   = {Master's thesis},
  year   = {2018}
}

@article{AltschulerParrilo2024RandomStepsizes,
  author        = {Altschuler, Jason M. and Parrilo, Pablo A.},
  title         = {Acceleration by Random Stepsizes: Hedging, Equalization, and the Arcsine Stepsize Schedule},
  journal       = {arXiv preprint arXiv:2412.05790},
  year          = {2024},
  doi           = {10.48550/arXiv.2412.05790},
  eprint        = {2412.05790},
  archiveprefix = {arXiv},
  note          = {To appear in Foundations of Computational Mathematics}
}

@article{AltschulerParrilo2025HedgingI,
  author    = {Altschuler, Jason M. and Parrilo, Pablo A.},
  title     = {Acceleration by Stepsize Hedging: Multi-Step Descent and the Silver Stepsize Schedule},
  journal   = {Journal of the ACM},
  volume    = {72},
  number    = {2},
  articleno = {12},
  pages     = {1--38},
  year      = {2025},
  publisher = {Association for Computing Machinery},
  doi       = {10.1145/3708502},
  note      = {Article 12}
}

@article{AltschulerParrilo2026Survey,
  author        = {Altschuler, Jason M. and Parrilo, Pablo A.},
  title         = {Stepsize Hedging: An Alternative Mechanism for Accelerating Gradient Descent},
  journal       = {INFORMS Computing Society Newsletter},
  pages         = {19--27},
  month         = jun,
  year          = {2026},
  eprint        = {2605.31386},
  archiveprefix = {arXiv},
  note          = {Research Highlight}
}

@article{BokAltschuler2026,
  author    = {Bok, Jinho and Altschuler, Jason M.},
  title     = {Optimized Methods for Composite Optimization: A Reduction Perspective},
  journal   = {Mathematical Programming},
  year      = {2026},
  doi       = {10.1007/s10107-026-02377-7}
}

@article{DasGuptaVanParysRyu2024,
  author        = {Das Gupta, Shuvomoy and Van Parys, Bart P. G. and Ryu, Ernest K.},
  title         = {{Branch-and-Bound Performance Estimation Programming}: A Unified Methodology for Constructing Optimal Optimization Methods},
  journal       = {Mathematical Programming},
  volume        = {204},
  number        = {1--2},
  pages         = {567--639},
  year          = {2024},
  doi           = {10.1007/s10107-023-01973-1},
  eprint        = {2203.07305},
  archiveprefix = {arXiv},
  primaryclass  = {math.OC}
}

@article{DroriTeboulle2014,
  author  = {Drori, Yoel and Teboulle, Marc},
  title   = {Performance of First-Order Methods for Smooth Convex Minimization: A Novel Approach},
  journal = {Mathematical Programming},
  volume  = {145},
  number  = {1--2},
  pages   = {451--482},
  year    = {2014},
  doi     = {10.1007/s10107-013-0653-0}
}

@inproceedings{KornowskiShamir2024,
  author    = {Kornowski, Guy and Shamir, Ohad},
  title     = {Open Problem: Anytime Convergence Rate of Gradient Descent},
  booktitle = {Proceedings of the Thirty-Seventh Conference on Learning Theory},
  series    = {Proceedings of Machine Learning Research},
  volume    = {247},
  pages     = {5335--5339},
  year      = {2024},
  publisher = {PMLR}
}

@article{TaylorHendrickxGlineur2017,
  author        = {Taylor, Adrien B. and Hendrickx, Julien M. and Glineur, Fran{\c{c}}ois},
  title         = {Smooth Strongly Convex Interpolation and Exact Worst-Case Performance of First-Order Methods},
  journal       = {Mathematical Programming},
  volume        = {161},
  number        = {1--2},
  pages         = {307--345},
  year          = {2017},
  doi           = {10.1007/s10107-016-1009-3},
  eprint        = {1502.05666},
  archiveprefix = {arXiv},
  primaryclass  = {math.OC}
}

@article{Young1953,
  author  = {Young, David},
  title   = {On {Richardson's} Method for Solving Linear Systems with Positive Definite Matrices},
  journal = {Journal of Mathematics and Physics},
  volume  = {32},
  number  = {1--4},
  pages   = {243--255},
  year    = {1953},
  doi     = {10.1002/sapm1953321243}
}

@article{ZhangJiang2026,
  author  = {Zhang, Zehao and Jiang, Rujun},
  title   = {Accelerated Gradient Descent by Concatenation of Stepsize Schedules},
  journal = {SIAM Journal on Optimization},
  volume  = {36},
  number  = {2},
  pages   = {1182--1210},
  year    = {2026},
  doi     = {10.1137/25M173898X}
}

\clearpage
\appendix
\crefalias{section}{appendix}
\section{Proofs for the Hard-Function Construction}
\label{app:local}

\begin{fact}[Properties of Moreau envelope]
\label{lem:moreau-envelope}
Let $g:\R^d\to(-\infty,+\infty]$ be proper, closed, and convex.  Its Moreau
envelope with parameter $1$ and its proximal map are
\[
  \operatorname{env}_1 g(x)
  :=\min_{y\in\R^d}\left\{g(y)+\frac12\|x-y\|^2\right\},
  \qquad
  \operatorname{prox}_g(x)
  :=\argminop_{y\in\R^d}\left\{g(y)+\frac12\|x-y\|^2\right\}.
\]
The function $\operatorname{env}_1 g$ is finite, convex, and continuously differentiable, and its gradient is $1$-Lipschitz and satisfies $\nabla\operatorname{env}_1 g(x)=x-\operatorname{prox}_g(x)$~\citep[Propositions~5.b, 7.b, and~7.d]{Moreau1965}.

Moreover, if $K\subset\R^d$ is nonempty, closed, and convex and
$\sigma_K(x):=\sup_{v\in K}\langle v,x\rangle$ is its support function, then
\begin{equation*}
  \nabla\operatorname{env}_1\sigma_K(x)=\Pi_K(x),
\end{equation*}
where $\Pi_K$ denotes Euclidean projection onto $K$.
Finally, if $g\ge0$, then $\operatorname{env}_1g\ge0$ and
$\operatorname{env}_1g$ and $g$ have the same zero set. 
For more background on Moreau envelopes and proximal maps, see \citet[Sections~1.G and~2.D]{RockafellarWets1998} and \citet[Sections~3.1 and~6.4.2]{ParikhBoyd2014}.
\end{fact}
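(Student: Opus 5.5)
The statement to prove is the Fact on Moreau envelopes, and the plan has four parts: the basic envelope properties, the projection formula for support functions, nonnegativity, and the zero set.

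\textbf{Basic properties.} For proper closed convex $g$, the map $y\mapsto g(y)+\tfrac12\|x-y\|^2$ is proper, closed and $1$-strongly convex. It therefore has a unique minimizer, so $\operatorname{prox}_g$ is well defined and $\operatorname{env}_1 g(x)$ is finite.

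Convexity of $\operatorname{env}_1 g$ follows because it is an infimal convolution of two convex functions. Equivalently, it is the partial minimization over $y$ of the jointly convex function $(x,y)\mapsto g(y)+\tfrac12\|x-y\|^2$.

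For differentiability, I will use the standard sandwich argument. Write $p=\operatorname{prox}_g(x)$. Testing the minimization at $x'$ with the candidate $y=p$ gives the upper bound
\[
\operatorname{env}_1 g(x')\le \operatorname{env}_1 g(x)+\langle x-p,\,x'-x\rangle+\tfrac12\|x'-x\|^2 .
\]
For the lower bound, use $p'=\operatorname{prox}_g(x')$ together with the optimality condition $x-p\in\partial g(p)$. This yields
\[
\operatorname{env}_1 g(x')\ge \operatorname{env}_1 g(x)+\langle x-p,\,x'-x\rangle .
\]
Together these show that $\operatorname{env}_1 g$ is differentiable with $\nabla\operatorname{env}_1 g(x)=x-p$.

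Firm nonexpansiveness of $\operatorname{prox}_g$ comes from monotonicity of $\partial g$. It implies that $x\mapsto x-\operatorname{prox}_g(x)$ is also firmly nonexpansive, hence $1$-Lipschitz. Alternatively, I can simply cite Moreau's propositions as the statement does.

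\textbf{Support functions.} Take $g=\sigma_K$ and use Moreau's decomposition $x=\operatorname{prox}_{\sigma_K}(x)+\operatorname{prox}_{\sigma_K^*}(x)$. Here $\sigma_K^*=\iota_K$, the indicator of $K$, so $\operatorname{prox}_{\iota_K}=\Pi_K$. Hence
\[
\nabla\operatorname{env}_1\sigma_K(x)=x-\operatorname{prox}_{\sigma_K}(x)=\Pi_K(x).
\]
To make this self-contained, I will verify directly that $p=x-\Pi_K(x)$ satisfies $x-p=\Pi_K(x)\in\partial\sigma_K(p)$. By the projection characterization, $\Pi_K(x)$ maximizes $\langle v,\,x-\Pi_K(x)\rangle$ over $v\in K$. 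That is exactly the statement that $\Pi_K(x)$ attains the supremum defining $\sigma_K(p)$, so $\Pi_K(x)\in\partial\sigma_K(p)$.

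\textbf{Nonnegativity and zero set.}
\begin{itemize}
\item If $g\ge0$, then every term in the minimization is nonnegative, so $\operatorname{env}_1 g\ge0$.
\item If $g(x)=0$, choosing $y=x$ gives $\operatorname{env}_1 g(x)\le0$, hence $\operatorname{env}_1 g(x)=0$.
\item Conversely, suppose $\operatorname{env}_1 g(x)=0$. The minimizer $y=p$ then satisfies $g(p)+\tfrac12\|x-p\|^2=0$. Both terms are nonnegative, so both vanish: $p=x$ and $g(x)=0$.
\end{itemize}

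\textbf{Main subtlety.} The only delicate point is the differentiability lower bound, where the optimality condition must be combined with the correct test point. All other steps are routine.
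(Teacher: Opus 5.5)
Your proof is correct. It differs from the paper only in that the paper gives no argument: the Fact is stated as background and delegated to Moreau's Propositions 5.b, 7.b and 7.d and to the Rockafellar--Wets and Parikh--Boyd references. What you give is the standard self-contained derivation those citations stand for. The paper buys brevity; your version buys a check of exactly what is used later, namely the projection formula for $\nabla\operatorname{env}_1\sigma_K$ and the zero-set property.

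Each piece holds up:
\begin{itemize}
\item In your lower bound, combining $g(p')\ge g(p)+\langle x-p,p'-p\rangle$ with the expansion of $\frac12\|x'-p'\|^2$ leaves precisely the nonnegative slack $\frac12\|(x'-p')-(x-p)\|^2$. The sandwich therefore gives differentiability with gradient $x-p$.
\item For the support-function part, Moreau's decomposition with $\sigma_K^*=\iota_K$ is valid.
\item Your direct check is also valid once you add one remark: the condition $x-p\in\partial g(p)$ is sufficient for $p=\operatorname{prox}_g(x)$. This follows because the subgradient inequality gives $g(y)+\frac12\|x-y\|^2\ge g(p)+\frac12\|x-p\|^2+\frac12\|y-p\|^2$ for all $y$.
\item The zero-set argument correctly uses attainment of the minimum.
\end{itemize}

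One small economy is available. If you keep the slack term rather than discarding it, symmetrizing in $x,x'$ gives
\[
\langle G(x)-G(x'),x-x'\rangle\ge\|G(x)-G(x')\|^2, \qquad G:=I-\operatorname{prox}_g.
\]
So the firm nonexpansiveness, and hence the $1$-Lipschitz gradient, falls out of the same computation without a separate monotonicity argument.
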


Fix $\epsilon,\gamma>0$ and recall that $c=\sqrt{1+2\gamma+\epsilon^2}$ and $R=\sqrt{\epsilon^2+(1+\gamma)^2}$.

\subsection{Proof of Lemma~\ref{lem:local-backward}}
\label{local:app-backward}
\begin{proof}
\label{local:backward-proof}

Set
\begin{equation}
  v(q):=(p(q),-q),\qquad
  p(q):=\sqrt{R^2-(q+\gamma)^2},\qquad
  0\le q\le1.
  \label{local:arc}
\end{equation}
The vectors $v(q)$ lie on the circle of radius $R$ centered at $(0,\gamma)$, and
\begin{equation}
  v(0)=(c,0),\qquad v(1)=(\epsilon,-1),\qquad
  \epsilon\le p(q)\le c,\qquad \frac{q}{p(q)}\le\frac1\epsilon.
  \label{local:arc-bounds}
\end{equation}
Let
\[
  K:=\operatorname{conv}\bigl(\{0\}\cup\{v(q):0\le q\le1\}\bigr).
\]
Consider stepsizes $\alpha_1,\ldots,\alpha_m\ge0$ and write their total stepsize as $S=\sum_{j=1}^m\alpha_j$.
Write $r_j=(u_j,-y_j)$ and $v_j=(p_j,-q_j)$ for $0\le j\le m$.
Start with $r_m=v_m=(\epsilon,-1)$.
For $j=m,m-1,\ldots,1$, define
\[
  \Delta_j:=y_j+\gamma-\alpha_j\gamma.
\]
If $\Delta_j\le u_j\gamma/c$, choose $v_{j-1}=(p_{j-1},-q_{j-1}):=(c,0)$.
Otherwise, choose
\begin{equation}
  D_j:=\sqrt{u_j^2+\Delta_j^2},\qquad
  q_{j-1}:=\frac{R\Delta_j}{D_j}-\gamma,\qquad
  p_{j-1}:=p(q_{j-1})=\frac{Ru_j}{D_j}.
  \label{local:backward-direction}
\end{equation}
In both cases define
\[
  r_{j-1}:=r_j+\alpha_jv_{j-1}
  =\bigl(u_j+\alpha_jp_{j-1},-y_j-\alpha_jq_{j-1}\bigr),
\]
so that
\[
  u_{j-1}=u_j+\alpha_jp_{j-1},\qquad
  y_{j-1}=y_j+\alpha_jq_{j-1}.
\]
The geometric relationship is illustrated in \cref{fig:backward-similar-triangles}.
Under this choice, we claim that for $0\le j\le m$,
\begin{equation}
  v_j=\Pi_K(r_j),\qquad
  u_j\ge\epsilon,\qquad y_j\ge1,\qquad \frac{y_j}{u_j}\le\frac1\epsilon.
  \label{local:projection-invariants}
\end{equation}
Furthermore, there is an index $j_\star\in\{0,\ldots,m\}$ such that
\begin{equation}\label{eq:j-star}
    \Delta_j>\frac{u_j\gamma}{c}
  \quad\Longleftrightarrow\quad
  j_\star<j\le m,
\end{equation}
where we take $j_\star=m$ if the strict inequality never holds.
\begin{figure}[H]
  \centering
  \includegraphics[width=\linewidth]{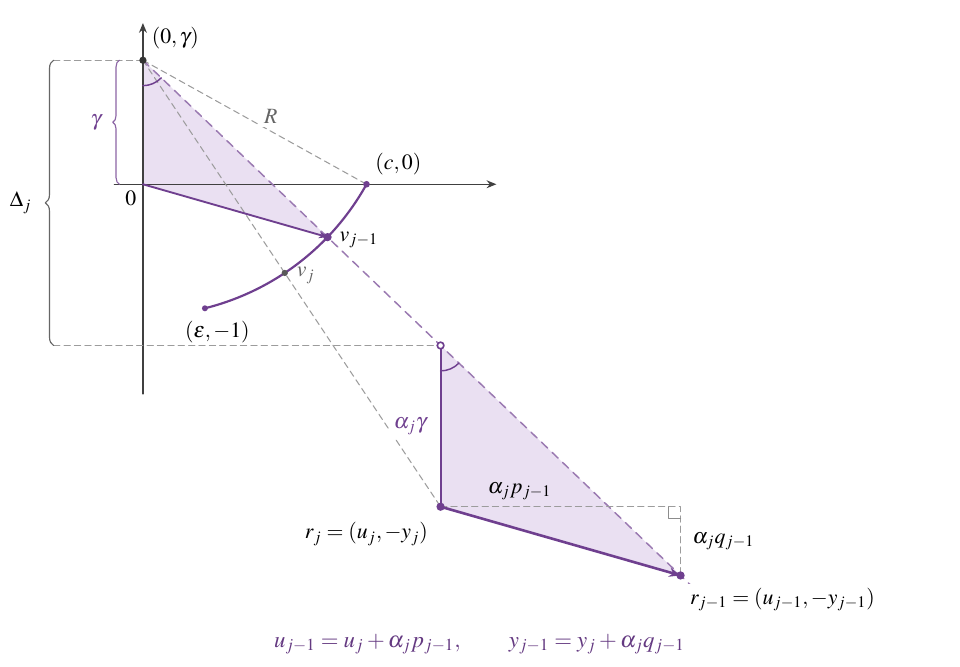}
  \caption{An illustration of \eqref{local:backward-direction}. The shaded triangles are similar, with scale factor $\alpha_j$.}
  \label{fig:backward-similar-triangles}
\end{figure}

\paragraph{Proof of \eqref{local:projection-invariants} and \eqref{eq:j-star}.}
We prove \eqref{local:projection-invariants} by backward induction on \(j\). At \(j=m\), \(r_m=v_m\in K\), so \(v_m=\Pi_K(r_m)\), and the coordinate inequalities hold with equality. 
Fix \(j\in\{1,\ldots,m\}\) and assume the coordinate ineqaulities hold at \(r_j\). We verify \eqref{local:projection-invariants} at \(r_{j-1}\).

We first prove the coordinate inequalities. Suppose first that $\Delta_j>u_j\gamma/c$, so that \eqref{local:backward-direction} defines $v_{j-1}$.
By $\alpha_j\ge0$ and the inductive bounds $u_j\ge\epsilon$ and $y_j/u_j\le1/\epsilon$,
\[
  \frac\gamma c<\frac{\Delta_j}{u_j}
  \le\frac{y_j+\gamma}{u_j}
  =\frac{y_j}{u_j}+\frac\gamma{u_j}
  \le\frac1\epsilon+\frac\gamma\epsilon
  =\frac{1+\gamma}{\epsilon}.
\]
Define $\psi(s):=Rs/\sqrt{1+s^2}-\gamma$ for $s\ge0$.
Since $R^2=c^2+\gamma^2$ and $R^2=\epsilon^2+(1+\gamma)^2$,
\[
  \psi\!\left(\frac\gamma c\right)
  =\frac{R\gamma}{\sqrt{c^2+\gamma^2}}-\gamma=0,
  \qquad
  \psi\!\left(\frac{1+\gamma}{\epsilon}\right)
  =\frac{R(1+\gamma)}{\sqrt{\epsilon^2+(1+\gamma)^2}}-\gamma=1.
\]
Moreover, $\psi'(s)=R(1+s^2)^{-3/2}>0$.
Because $q_{j-1}=\psi(\Delta_j/u_j)$, the preceding bounds therefore give
\[
  0<q_{j-1}\le1.
\]
If instead $\Delta_j\le u_j\gamma/c$, then $(p_{j-1},q_{j-1})=(c,0)$.
Thus, in both branches, \eqref{local:arc-bounds} gives $p_{j-1}\ge\epsilon$ and $q_{j-1}\ge0$, so $u_{j-1}\ge\epsilon$ and $y_{j-1}\ge1$.
Moreover,
\[
  \frac{y_{j-1}}{u_{j-1}}
  =\frac{u_j}{u_j+\alpha_jp_{j-1}}\frac{y_j}{u_j}
   +\frac{\alpha_jp_{j-1}}{u_j+\alpha_jp_{j-1}}
    \frac{q_{j-1}}{p_{j-1}}
  \le\frac1\epsilon,
\]
as the two coefficients on the right sum to one, and \eqref{local:projection-invariants} and \eqref{local:arc-bounds} bound the two ratios by $1/\epsilon$.
This proves the coordinate inequalities for the induction step.

Now we prove that $v_{j-1}=\Pi_K(r_{j-1})$.
Suppose again that $\Delta_j>u_j\gamma/c$.
Then
\[
  v_{j-1}-(0,\gamma)
  =\frac{R}{D_j}(u_j,-\Delta_j),
  \qquad
  r_j=(u_j,-\Delta_j)+(1-\alpha_j)(0,\gamma).
\]
Using $r_{j-1}=r_j+\alpha_jv_{j-1}$, we obtain
\begin{equation}
\begin{aligned}
  r_{j-1}-v_{j-1}
  &=r_j+(\alpha_j-1)v_{j-1}\\
  &=\left(\frac{D_j}{R}+\alpha_j-1\right)
    \bigl(v_{j-1}-(0,\gamma)\bigr).
\end{aligned}
  \label{local:curved-normal}
\end{equation}
For every $\alpha_j\ge0$, the reverse triangle inequality and $u_j\ge\epsilon$, $y_j\ge1$ give
\[
\begin{aligned}
  D_j
  &=\bigl\|(u_j,y_j+\gamma)-\alpha_j(0,\gamma)\bigr\|\\
  &\ge\sqrt{u_j^2+(y_j+\gamma)^2}-\alpha_j\gamma\\
  &\ge\sqrt{\epsilon^2+(1+\gamma)^2}-\alpha_j\gamma
   =R-\alpha_j\gamma,
\end{aligned}
\]
hence
\[
  \frac{D_j}{R}+\alpha_j-1
  \ge\frac{R-\alpha_j\gamma}{R}+\alpha_j-1
  =\alpha_j\left(1-\frac\gamma R\right)\ge0.
\]
Therefore, the vector $v_{j-1}-(0,\gamma)$ is an outward normal to the circle, and \eqref{local:curved-normal} proves $v_{j-1}=\Pi_K(r_{j-1})$.
Now, suppose that $\Delta_j\le u_j\gamma/c$, so that $v_{j-1}=(c,0)$, and set $\mu_j:=u_j/c+\alpha_j-1$.
The defining inequality is equivalent to
\[
  \Delta_j=y_j+\gamma-\alpha_j\gamma\le\frac{u_j\gamma}{c}
  \quad\Longleftrightarrow\quad
  y_j\le\gamma\left(\frac{u_j}{c}+\alpha_j-1\right)=\gamma\mu_j.
\]
Since $y_j\ge1$, we have $\mu_j>0$, and
\begin{equation}
  r_{j-1}-(c,0)=(c\mu_j,-y_j)
  =\mu_j(c,-\gamma)+(0,\gamma\mu_j-y_j).
  \label{local:horizontal-normal}
\end{equation}
For every $w\in K$, the containment of $K$ in the disk centered at $(0,\gamma)$ and the inequality $w^{(2)}\le0$ give
\[
  \langle(c,-\gamma),w-(c,0)\rangle\le0,
  \qquad
  \langle(0,1),w-(c,0)\rangle=w^{(2)}\le0.
\]
Thus $(c,-\gamma)$ and $(0,1)$ belong to the normal cone of $K$ at $(c,0)$.
Since \eqref{local:horizontal-normal} is a nonnegative combination of these two vectors, it proves $v_{j-1}=\Pi_K(r_{j-1})$.

Finally, if $j\ge2$, then $y_{j-1}=y_j$ and $u_{j-1}=u_j+\alpha_jc$, so
\[
  \Delta_{j-1}
  =y_{j-1}+\gamma-\alpha_{j-1}\gamma
  \le y_j+\gamma
  \le\frac{u_j\gamma}{c}+\alpha_j\gamma
  =\frac{u_{j-1}\gamma}{c}.
\]
Hence, once $\Delta_j\le u_j\gamma/c$ holds at an index $j$, it also holds at every earlier index, and the construction chooses $v_i=(c,0)$ for all $0\le i<j$.
This completes the proof of \eqref{eq:j-star}.

\paragraph{Bounding the total stepsize for $j>j_\star$.}
\label{local:turning-proof}

Set $\sigma_j:=(y_j+\gamma)/u_j$ for $0\le j\le m$.
For every index $j$ satisfying $\Delta_j>u_j\gamma/c$, the recursion from $r_j$ to $r_{j-1}$ gives
\(\sigma_j-\sigma_{j-1}=\alpha_j\gamma/u_j,\) and thus
\begin{equation}
  \frac{u_{j-1}}{u_j}
  =1+\frac{\alpha_j}{u_j}p_{j-1}
  =1+\frac R\gamma
    \frac{\sigma_j-\sigma_{j-1}}
         {\sqrt{1+\sigma_{j-1}^2}}.
  \label{local:turning-recursion}
\end{equation}
For $A(\sigma)=\sigma+\sqrt{1+\sigma^2}$, convexity gives
\[
\begin{aligned}
  A(\sigma_j)
  &\ge A(\sigma_{j-1})
     +A'(\sigma_{j-1})(\sigma_j-\sigma_{j-1})\\
  &=A(\sigma_{j-1})\left(
     1+\frac{\sigma_j-\sigma_{j-1}}
              {\sqrt{1+\sigma_{j-1}^2}}
     \right),
\end{aligned}
\]
where we used $A'(\sigma)=A(\sigma)/\sqrt{1+\sigma^2}$.
Since $R/\gamma>1$, Bernoulli's inequality and \eqref{local:turning-recursion} give
\begin{equation}
  \frac{u_{j-1}}{u_j}
  =1+\frac R\gamma \frac{\sigma_j-\sigma_{j-1}}{\sqrt{1+\sigma_{j-1}^2}}
  \le\left(1+\frac{\sigma_j-\sigma_{j-1}}{\sqrt{1+\sigma_{j-1}^2}}\right)^{R/\gamma}
  \le\left(
    \frac{A(\sigma_j)}{A(\sigma_{j-1})}
  \right)^{R/\gamma}.
  \label{local:turning-telescope}
\end{equation}
Thus the ratios in \eqref{local:turning-telescope} telescope over $j=j_\star+1,\ldots,m$.
At the terminal point,
\[
  u_m=\epsilon,\qquad \sigma_m=\frac{1+\gamma}{\epsilon},
\]
and, for every $j_\star<j\le m$,
\[
  \sigma_{j-1}
  =\frac{y_j+\gamma-\alpha_j\gamma}{u_j}
  =\frac{\Delta_j}{u_j}
  >\frac\gamma c.
\]
Define
\[
  H_{\mathrm{turn}}
  :=\sum_{j:\,\Delta_j>u_j\gamma/c}\alpha_j
  =\sum_{j=j_\star+1}^m\alpha_j,
\]
then \eqref{local:arc-bounds} gives
\[
  u_{j_\star}
  =u_m+\sum_{j=j_\star+1}^m\alpha_jp_{j-1}
  \ge\epsilon(1+H_{\mathrm{turn}}).
\]
On the other hand, telescoping \eqref{local:turning-telescope} gives
\[
  \frac{u_{j_\star}}{\epsilon}
  \le\left[
    \frac{A(\sigma_m)}{A(\sigma_{j_\star})}
  \right]^{R/\gamma}.
\]
The endpoint values satisfy
\[
  A(\sigma_m)
  =A\!\left(\frac{1+\gamma}{\epsilon}\right)
  =\frac{1+\gamma+R}{\epsilon},
  \qquad
  A(\sigma_{j_\star})
  \ge A\!\left(\frac\gamma c\right)
  =\frac{\gamma+R}{c}.
\]
Consequently,
\[
  \frac{u_{j_\star}}{\epsilon}
  \le\left[
    \frac{c(1+\gamma+R)}{\epsilon(\gamma+R)}
  \right]^{R/\gamma}.
\]
It follows that
\begin{equation*}
  H_{\mathrm{turn}}\le H
  :=\left[\frac{c(1+\gamma+R)}{\epsilon(\gamma+R)}\right]^{R/\gamma}-1.
\end{equation*}
This conclusion also holds when $j_\star=m$, in which case $H_{\mathrm{turn}}=0$.

Finally, the displacements are
\begin{equation*}
  P=\sum_{j=1}^m\alpha_jp_{j-1},\qquad
  Q=\sum_{j=1}^m\alpha_jq_{j-1}.
\end{equation*}
When $\Delta_j\le u_j\gamma/c$, we have $q_{j-1}=0$; when $\Delta_j>u_j\gamma/c$, we have $0<q_{j-1}\le1$.
Consequently,
\begin{equation*}
  P\le cS,\qquad
  0\le Q
  =\sum_{j:\,\Delta_j>u_j\gamma/c}\alpha_jq_{j-1}
  \le H_{\mathrm{turn}}\le H.
\end{equation*}
\end{proof}

\subsection{Proof of Lemma~\ref{lem:local-transfer}}
\label{local:app-transfer}

\begin{proof}
Apply \Cref{lem:local-backward} to the prescribed stepsizes $\alpha_1,\ldots,\alpha_m$.
Retain its points $r_j$, vectors $v_j$, and the displacements $P,Q$ in \eqref{local:displacement-constants}, which satisfy \eqref{local:displacement-bounds}.
Define the constant
\begin{equation*}
    C_0:=\epsilon+\frac{H+1}{\epsilon},
\end{equation*}
then 
\begin{equation}\label{eq:cSplusC0}
    r_0=(\epsilon+P,-1-Q),\qquad
  \epsilon+P+\frac{1+Q}{\epsilon}\le cS+C_0.
\end{equation}

For the given final stepsize $b\ge1+\epsilon^{-2}$ and constant $A\ge C_0$, set
\begin{equation*}
  \rho:=\frac1{cS+\epsilon b+A},\qquad
  z_\circ:=(1,0)-\rho r_0.
\end{equation*}
Let $\mathcal V=\{(0,0),v_0,\ldots,v_m,(c,0)\}$ and define
\begin{equation}
  \phi(z):=\rho\max_{v\in\mathcal V}\langle v,z-z_\circ\rangle,
  \qquad f:=\operatorname{env}_1\phi.
  \label{local:finite-envelope}
\end{equation}
The finite convex hull $K_{\mathcal V}=\operatorname{conv}(\mathcal V)$ contains every $v_j$ and is contained in $K$.
Thus each $v_j$ remains the projection of $r_j$ onto the smaller set.
Since $(0,0)\in\mathcal V$, we have $\phi\ge0$; hence \Cref{lem:moreau-envelope} shows that $f$ is nonnegative, convex, and $1$-smooth.
Furthermore,
\[
  \phi(z)=\rho\sigma_{K_{\mathcal V}}(z-z_\circ)
  =\sigma_{\rho K_{\mathcal V}}(z-z_\circ),
  \qquad
  f(z)=\operatorname{env}_1\sigma_{\rho K_{\mathcal V}}(z-z_\circ),
\]
where the identity for $f$ follows by changing variables in the definition of
the envelope.  Therefore, \Cref{lem:moreau-envelope} and
$\Pi_{\rho K_{\mathcal V}}(\rho r)=\rho\Pi_{K_{\mathcal V}}(r)$ give
\begin{equation}
  \nabla f(z_\circ+\rho r_j)=\rho v_j\quad(0\le j\le m).
  \label{local:scaled-gradients}
\end{equation}
Let $U:=1-\rho(P+\epsilon b)$, $Y:=\rho Q$, and $G:=\rho b$, where $U\geq 0$ by \eqref{local:displacement-bounds}.
By \Cref{lem:local-backward}, if the updates on $f$ starts at $(1,0)$, it will end at $(U+\epsilon G,Y)$ just before the step of size $b$, where the gradient is $\rho(\epsilon,-1)$.
Therefore, the last step takes the iterate to $(U,Y+G)$.
During the intervening updates, the first coordinate decreases and the second increases as all stepsizes are nonnegative, and all gradients $(p,-q)$ satisfy $p>0$ and $q\ge0$.
This shows that the first coordinate remains at least $U$ through the final step, while the second lies in $[0,Y]$ before the final step.

For every $v=(p,-q)\in\mathcal V$, $q\le p/\epsilon$, $A\geq C_0$, and \eqref{eq:cSplusC0} gives
\[
  \langle v,-z_\circ\rangle
  =-p+\rho\bigl[p(\epsilon+P)+q(1+Q)\bigr]
  \le-p+\rho p(cS+C_0)\le0.
\]
Since $p\ge0$ and $q\ge0$, every affine piece in \eqref{local:finite-envelope} is nonpositive on $\{z_1\le0,z_2\ge0\}$.
On this region, $\phi=0$, so \Cref{lem:moreau-envelope} gives $f=0$.
Therefore
\begin{equation*}
  f(z_1,z_2)=0\qquad(z_1\le0,\ z_2\ge0).
\end{equation*}
Moreover, for every $v=(p,-q)\in\mathcal V$,
\[
  \langle v,(U,Y)-z_\circ\rangle
  =\rho\bigl[(1-b)\epsilon p+q\bigr]\le0,
\]
because $q\le p/\epsilon$ and $b\ge1+\epsilon^{-2}$.
Increasing the second coordinate decreases this inner product.
Thus $\phi=0$ on this region, and \Cref{lem:moreau-envelope} again gives
\begin{equation*}
  f(U,z_2)=0\qquad(z_2\ge Y).
\end{equation*}

\end{proof}

\subsection{Proof of Lemma~\ref{thm:transfer}}
\label{local:app-chain}

\begin{proof}
\label{local:chain-proof}
Recall from \Cref{sec:local} that
\begin{equation}
  B_0:=2(1+\epsilon^{-2}),\qquad
  a\ge 2C_0/c.
  \label{local:theorem-constants}
\end{equation}
Suppose first that $k\ge1$.
For each $i$, apply the local construction in \Cref{lem:local-transfer} with stepsizes $h_t/2$ for $t_{i-1}<t<t_i$, followed by the final step of size $b_i/2$, and with $A=ca/2$.
The assumed bound $b_i\ge B_0$, together with \eqref{local:theorem-constants}, verifies the hypotheses of \cref{lem:local-transfer}.
Write $f_i,U_i,Y_i$ for the resulting function and coordinates in \cref{lem:local-transfer}.
Use the distances $D_i$ and activation levels $\ell_i$ in \eqref{local:chain-parameters} and the two-coordinate components $\Phi_i$ in \eqref{local:chain-component}.
Each $\Phi_i$ is $1$-smooth on its pair of coordinates.
Use the one-sided Huber function $H_\delta$ from \cref{sec:overview}, with $\delta:=D_{k+1}/(1+s_{k+1})$, and define $F$ by \eqref{local:global-objective}.

\paragraph{Smoothness of $F$.}
The objective is convex and nonnegative.
All $\ell_i$ are nonnegative, so the first condition in \eqref{local:zero-regions} gives $\Phi_i(0)=0$, and the terminal term also vanishes at zero.
Thus $F(0)=\min F=0$.
To check smoothness, let $d=y-x$.
Summing the component smoothness inequalities gives
\begin{align*}
  F(y)&\le F(x)+\langle\nabla F(x),d\rangle
    +\frac14\left[
      \sum_{i=1}^k\bigl((d^{(i)})^2+(d^{(i+1)})^2\bigr)
      +(d^{(k+1)})^2\right]\\*
  &\le F(x)+\langle\nabla F(x),d\rangle+\frac12\|d\|^2.
\end{align*}
Each coordinate appears at most twice in the bracketed sum, proving that $F$ is $1$-smooth.

\paragraph{Analysis of trajectory.}
We now verify the trajectory starting from $x_0=e_1$.
At the start of block $i$, the claimed state is
\begin{equation}
  x_{t_{i-1}}^{(j)}=
  \begin{cases}
    \ell_j+D_jU_j,&j<i,\\
    \ell_i+D_i,&j=i,\\
    0,&j>i.
  \end{cases}
  \label{local:block-invariant}
\end{equation}
This holds initially.
Consider the path obtained from the trajectory of $f_i$ by the coordinate change in \eqref{local:chain-component}, leaving the other coordinates fixed.
At every iterate $t_{i-1}\le t<t_i$, it satisfies
\[
  x_t^{(i)}\ge\ell_i+D_iU_i\ge\ell_i,
  \qquad 0\le x_t^{(i+1)}\le D_iY_i=\ell_{i+1},
  \qquad x_t^{(j)}=0\quad(j\ge i+2).
\]
For every earlier two-coordinate component $j<i$, the first input to $f_j$ equals $U_j$ and the second is at least $\ell_{j+1}/D_j=Y_j$.
Hence the component remains zero by the second condition in \eqref{local:zero-regions}.
For every later two-coordinate component, the first input is nonpositive and the second is zero, so the component remains zero by the first condition in \eqref{local:zero-regions}.
The terminal Huber term is also zero before its activation.
A differentiable nonnegative function has zero gradient wherever it vanishes.
Thus the only nonzero gradient along this candidate path is that of $\Phi_i/2$.
By the scaling in \eqref{local:chain-component} and the factor $1/2$ in \eqref{local:global-objective}, each actual step $h_t$ induces exactly the local step $h_t/2$, so the candidate path is the GD trajectory.

The step of size $b_i$ gives
\[
  x_{t_i}^{(i)}=\ell_i+D_iU_i,\qquad
  x_{t_i}^{(i+1)}=D_i(Y_i+G_i)=\ell_{i+1}+D_{i+1},
\]
and all coordinates $j\ge i+2$ remain zero.
This proves \eqref{local:block-invariant} for the next block.
During that block, $x^{(i+1)}$ never drops below $\ell_{i+1}$, so the zero region of component $i$ continues to apply.
The induction verifies that the next component becomes active and every earlier component remains inactive.

\paragraph{Calculation of final loss.}
From $x_{t_k}$ onward, only the terminal Huber term is active.
The shifted last coordinate at $x_{t_k}$ equals $D_{k+1}$.
We prove by induction on $t$ that, for every $t_k\le t\le n$,
\[
\begin{aligned}
  x_t^{(k+1)}-\ell_{k+1}
  &=D_{k+1}-\frac{\delta}{2}\sum_{\tau=t_k+1}^{t}h_\tau\\
  &\ge D_{k+1}-\frac{\delta s_{k+1}}2
   =\delta\left(1+\frac{s_{k+1}}2\right)\ge\delta,
  \qquad \delta=\frac{D_{k+1}}{1+s_{k+1}}.
\end{aligned}
\]
At $t=t_k$, the sum is empty and the equality holds.
If it holds at $t-1$, then the terminal Huber term is in its affine region at
$x_{t-1}$, where its contribution to the last-coordinate gradient is
$\delta/2$; the update of size $h_t$ therefore gives the equality at $t$.
The inequalities use
$\sum_{\tau=t_k+1}^{t}h_\tau\le s_{k+1}$.
Thus this trajectory stays in the affine region and above its activation level.
Earlier components remain zero, and the final value is
\begin{equation}
  F(x_n)=\frac12\left[\delta\left(D_{k+1}-\frac{\delta s_{k+1}}{2}\right)
                             -\frac{\delta^2}{2}\right]
        =\frac{D_{k+1}^2}{4(1+s_{k+1})}.
  \label{local:terminal-value}
\end{equation}
Since $D_{k+1}=\prod_{i=1}^kG_i$ and $\|x_0-0\|=1$, the normalization in \eqref{eq:worst-case-error} gives $\mathcal R_n(h)\ge2F(x_n)$.
Dropping the factor $2$ proves \eqref{eq:transfer} for $k\ge1$.
For $k=0$, take $F(x)=H_\delta(x)/2$ in one dimension with
$x_0=1$, $\delta=1/(1+s_1)$, and minimizer zero.
The same calculation in \eqref{local:terminal-value} applies with $D_1=1$ and
proves the empty-product case as well.
\end{proof}

\section{Proofs for the Non-anytime Lower Bound}
\label{app:global}

We use the constants and conventions from \cref{sec:global}, with $0<\epsilon<1/2$, $c>\epsilon$, $a\ge\max\{2,2B_0+2,2C_0/c\}$, and $\lambda\ge a$.
For an empty subsequence, set $V_\mu(\varnothing)=\mu$.
A minimum over no indices is $+\infty$.
A checkpoint set selecting a zero step has $\Psi_\lambda(T;h)=+\infty$.
Increasing $a$ preserves the transfer bound.

\subsection[Proof of (\ref{global:eq-cost-lower-bound})]{Proof of \eqref{global:eq-cost-lower-bound}}
\label{global:app-cost-lower-bound}

\begin{lemma}
\label{lem:cost-lower-bound}
For every $n\ge0$, $h\in[0,\infty)^n$, and $\lambda\ge a$,
\begin{equation*}
  \mathcal R_n(h)\ge\frac{\lambda-1}{V_\lambda(h)^2}.
\end{equation*}
\end{lemma}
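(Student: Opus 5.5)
We take a minimizing checkpoint set and feed it into \cref{thm:transfer}. Fix $h\in[0,\infty)^n$ and $\lambda\ge a$. Choose $T=\{t_1<\cdots<t_k\}$ with $\Psi_\lambda(T;h)=V_\lambda(h)$; the minimum is over a finite family and the empty set gives a finite value, so $T$ selects only positive steps.

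\textbf{Step 1: the selected steps are large.} To apply \cref{thm:transfer} we need $b_i\ge B_0$. Suppose some $b_i<B_0$. Then the $i$-th factor satisfies
\[
\epsilon+\frac{c(a+s_i)}{b_i}\;\ge\;\frac{c(a+s_i)}{B_0}\;\ge\;\frac{a+s_i}{B_0},
\]
using $c\ge1$, and this exceeds $\tfrac{a}{B_0}\ge2$ because $a\ge 2B_0+2$. Compare $T$ with $T'=T\setminus\{t_i\}$. Removing $t_i$ merges the gap $s_i$, the step $b_i$, and the next gap $s_{i+1}$ (or $s_{k+1}$) into one gap. For $i<k$ the merged factor is
\[
\epsilon+\frac{c(a+s_i+b_i+s_{i+1})}{b_{i+1}}.
\]
We want the product of the two old factors to dominate this merged factor. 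The old factor $i+1$ already contains $c(a+s_{i+1})/b_{i+1}$. The extra mass $s_i+b_i$ is at most $(a+s_i+B_0)$, which is at most the multiplier of factor $i$ times a constant; here the condition $a\ge2B_0+2$ is designed to absorb it.

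Concretely, write $X=\epsilon+c(a+s_{i+1})/b_{i+1}\ge ca/b_{i+1}$ and $m=(a+s_i)/B_0$. It suffices to show $mX\ge X+c(s_i+b_i)/b_{i+1}$, i.e.
\[
(m-1)X\;\ge\;\frac{c(s_i+B_0)}{b_{i+1}}.
\]
Since $(m-1)\,ca\ge\tfrac{ca}{B_0}(a+s_i-B_0)$ and $a/B_0\ge2$, this reduces to $2(a+s_i-B_0)\ge s_i+B_0$, which holds because $a\ge 2B_0$. The case $i=k$ is the same, with $\lambda+s_{k+1}\ge a$ playing the role of $X$. Hence $\Psi_\lambda(T';h)\le\Psi_\lambda(T;h)$. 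Iterating, we may assume every $b_i\ge B_0$ while keeping $\Psi_\lambda(T;h)=V_\lambda(h)$. This exchange argument is the main obstacle and where the constant bookkeeping, in particular $a\ge2B_0+2$, must be checked carefully.

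\textbf{Step 2: apply the transfer bound.} With all $b_i\ge B_0$, \cref{thm:transfer} gives
\[
\mathcal R_n(h)\;\ge\;\frac{1}{4(1+s_{k+1})}\prod_{i=1}^k\Bigl(\epsilon+\frac{c(a+s_i)}{b_i}\Bigr)^{-2}
\;=\;\frac{(\lambda+s_{k+1})^2}{4(1+s_{k+1})\,V_\lambda(h)^2}.
\]

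\textbf{Step 3: elementary inequality.} It remains to show
\[
\frac{(\lambda+s)^2}{4(1+s)}\;\ge\;\lambda-1 \qquad\text{for all } s\ge0.
\]
Expanding, this is
\[
(\lambda+s)^2-4(\lambda-1)(1+s)\;=\;(s-\lambda+2)^2\;\ge\;0,
\]
which always holds. The case $k=0$ is covered identically by the empty-product version of \cref{thm:transfer}, and $n=0$ is trivial. This yields $\mathcal R_n(h)\ge(\lambda-1)/V_\lambda(h)^2$.
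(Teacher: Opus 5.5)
Your proof is correct and has the same structure as the paper's. Take a minimizing checkpoint set, delete selected steps that are too small, apply \cref{thm:transfer}, and finish with $(\lambda+s)^2-4(\lambda-1)(1+s)=(s-\lambda+2)^2\ge0$. The only difference is the deletion step. The paper uses monotonicity of the insertion ratio $I(\ell,y;b)$ in $\ell$ and $y$ to get $I\ge I(0,a;b)\ge 4/3$ whenever $b\le a/2$, so deletion strictly decreases $\Psi_\lambda$ and every minimizer has $b_i>a/2\ge B_0$. Your direct comparison of the two old factors with the merged factor gives only a weak decrease for $b_i<B_0$, so you must iterate deletions; this still yields a minimizer with all $b_i\ge B_0$, which is all the lemma needs.
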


\begin{proof}[Proof of \cref{lem:cost-lower-bound}]
Since $\Psi_\lambda(\varnothing;h)<\infty$, a minimizing set cannot select a zero step.
Consider selecting a step of positive size $b$ at an index not already in the set.
Let $\ell$ be the sum of unselected steps preceding this step, starting after the preceding checkpoint if there is one.
Let $r$ be the sum of unselected steps following this step, ending before the next checkpoint if there is one.
If there is one, put $y=a+r+\epsilon b'/c$ when the next selected step has size $b'$; otherwise, put $y=\lambda+r$.
In both cases $y\ge a$, and insertion of this selected step of size $b$ multiplies $\Psi_\lambda$ by
\begin{equation}
  I(\ell,y;b)=\frac{[c(a+\ell)+\epsilon b]y}{b(\ell+b+y)}.
  \label{global:eq-insertion-ratio}
\end{equation}
The logarithmic derivatives are
\begin{align*}
  \partial_\ell\log I
    &=\frac{(c-\epsilon)b+c(y-a)}{[c(a+\ell)+\epsilon b](\ell+b+y)}>0,\\
  \partial_y\log I
    &=\frac{\ell+b}{y(\ell+b+y)}>0.
\end{align*}
If $0<b\le a/2$, then
\[
  I(\ell,y;b)\ge I(0,a;b)
  =\frac{(ca+\epsilon b)a}{b(a+b)}\ge\frac43.
\]
Deleting such a checkpoint strictly decreases $\Psi_\lambda$.
Thus every selected step of a minimizing set satisfies $b_i>a/2\ge B_0$, and \cref{thm:transfer} applies, including its empty-set case.
For a minimizing set $T$, it gives
\[
  \mathcal R_n(h)\ge
  \frac{(\lambda+s_{k+1})^2}
       {4(1+s_{k+1})V_\lambda(h)^2}.
\]
Finally, $ (\lambda+s)^2-4(\lambda-1)(1+s)=(s-\lambda+2)^2\ge0$ for every $s\ge0$, which proves the claim.
\end{proof}

\begin{remark}
The parameter $\lambda$ comes from the identity
\[
  \frac{1}{4(1+s)}
  =\max_{\lambda\ge2}\frac{\lambda-1}{(\lambda+s)^2},
  \qquad s\ge0.
\]
The expression $\lambda+s$ allows the factorization in \cref{global:lem-value-decomposition}.
We use $\lambda\ge a$ so that the same parameter range applies to each smaller problem.
\end{remark}

\subsection{Proof of Lemma~\ref{global:lem-value-decomposition}}
\label{global:app-value-decomposition}
\begin{proof}[Proof of \cref{global:lem-value-decomposition}]
Fix $t$ with $b=h_t>0$ and write $T=T_L\cup\{t\}\cup T_R$, where $T_L$ and $T_R$ contain the checkpoints before and after $t$.
Reindexing within each subsequence, the definition gives
\[
  \Psi_\lambda(T;h)
  =\frac cb\,
    \Psi_{a+\epsilon b/c}(T_L;h_{1:t-1})
    \Psi_\lambda(T_R;h_{t+1:n}).
\]
Indeed, the factor for $t$ is $\epsilon+c(a+s)/b=(c/b)(a+\epsilon b/c+s)$, where $s$ sums the stepsizes before $t$ and after the preceding checkpoint, or the start of the schedule.
Minimizing independently over $T_L$ and $T_R$ gives the term indexed by $t$ in \eqref{global:eq-value-decomposition}.
Every nonempty set with finite $\Psi_\lambda$ contains a positive step, so taking the minimum over $t$ and including the empty set proves the identity.
\end{proof}

\subsection{Proof of Lemma~\ref{thm:weighted-recursion}}
\label{global:app-uniform-growth}
\label{global:app-extremizers}
\label{global:app-gap-splitting}
\label{global:app-cost-preservation}
\label{global:app-weighted-splitting}

We adapt the log-submodularity and extremal-schedule arguments of \citet[Lemmas~C.1--C.2 and Appendix~C.6]{JungChoYun2026}.

For fixed $h$ and $\lambda$, call $T$ \emph{tight} if $\Psi_\lambda(T;h)=V_\lambda(h)$.
We use the same term for minimizers of $\widetilde\Psi_\Lambda$, defined in \eqref{global:eq-pure-cost}--\eqref{global:eq-pure-empty}.
We first show that $U_n(\lambda)$ is attained by a schedule for which the empty set and a nonempty checkpoint set are both tight.

\begin{lemma}[Log-submodularity]
\label{global:lem-log-submodularity}
For every positive schedule $h\in(0,\infty)^n$ and every two checkpoint sets $A,B$,
\begin{equation}
  \Psi_\lambda(A;h)\Psi_\lambda(B;h)
  \ge\Psi_\lambda(A\cap B;h)\Psi_\lambda(A\cup B;h).
  \label{global:eq-log-submodularity}
\end{equation}
The inequality is strict when $A$ and $B$ are disjoint and nonempty.
Consequently, tight sets are closed under unions and intersections, and two nonempty tight sets cannot be disjoint.
\end{lemma}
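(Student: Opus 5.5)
The plan is to reduce \eqref{global:eq-log-submodularity} to a local ``diminishing returns'' inequality for inserting one checkpoint, and to check that inequality with the insertion ratio \eqref{global:eq-insertion-ratio} from the proof of \cref{lem:cost-lower-bound}. Since $h$ is positive, every $\Psi_\lambda(T;h)$ is finite and positive, so we may work with ratios.

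\textbf{Structure of $\Psi_\lambda$.} Adjoin the sentinels $t_0=0$ and $t_{k+1}=n+1$. Then $\Psi_\lambda(T;h)$ is a product over consecutive pairs $(u,t)$ of $T\cup\{0,n+1\}$ of factors depending only on $u$ and $t$:
\begin{itemize}
\item for $t\le n$, the factor is $\epsilon+c(a+S_{u,t})/h_t=(c/h_t)\bigl(a+S_{u,t}+\epsilon h_t/c\bigr)$;
\item for $t=n+1$, the factor is $\lambda+S_{u,n+1}$.
\end{itemize}
Here $S_{u,t}$ is the sum of $h$ strictly between $u$ and $t$. Consequently, inserting a new index $x$ between consecutive members $u<w$ changes only the factors attached to $x$ and to $w$.

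The resulting multiplicative change is exactly $I(\ell,Y;h_x)$, where $\ell=S_{u,x}$. The right parameter is $Y=a+S_{x,w}+\epsilon h_w/c$ if $w\le n$, and $Y=\lambda+S_{x,n+1}$ if $w=n+1$. So the ratio depends only on the gaps to the neighbours of $x$.

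\textbf{Local inequality.} For $T$ and $x\ne y$ not in $T$, I will show
\[
\frac{\Psi_\lambda(T\cup\{x,y\};h)}{\Psi_\lambda(T\cup\{y\};h)}\le\frac{\Psi_\lambda(T\cup\{x\};h)}{\Psi_\lambda(T;h)}.
\]
Equality holds unless $x$ and $y$ are adjacent in $T\cup\{x,y\}$, in which case the inequality is strict.

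\begin{itemize}
\item If some element of $T$ lies between $x$ and $y$, the neighbours of $x$ are the same in $T$ and in $T\cup\{y\}$. Both ratios are then the same $I$, so equality holds.
\item Suppose no element of $T$ separates them. By symmetry of the final product inequality, take first $x<y$, so $y$ is the right neighbour of $x$ in $T\cup\{x,y\}$. Let $w$ be the next element of $T\cup\{n+1\}$ after $y$.
  \begin{itemize}
  \item With $y$ present, the right parameter of $x$ is $Y_1=a+S_{x,y}+\epsilon h_y/c$.
  \item Without $y$, it is $Y_0=\mu+S_{x,y}+h_y+S_{y,w}+(\text{nonnegative})$, with $\mu\in\{a,\lambda\}$, $\mu\ge a$.
  \end{itemize}
  Since $\epsilon<c$ and $h_y>0$, we have $Y_0>Y_1$. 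The left gap $\ell$ is unchanged. Since $\partial_y\log I>0$ (computed in the proof of \cref{lem:cost-lower-bound}), the ratio with $y$ present is strictly smaller.
\item In the case $y<x$, the roles are mirrored. Now the left gap of $x$ shrinks from $S_{y,x}+h_y+(\text{earlier})$ to $S_{y,x}$, and the right parameter is unchanged. We use instead $\partial_\ell\log I>0$, valid because $c>\epsilon$ and $y\ge a$.
\end{itemize}

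\textbf{Globalizing.} Given $A,B$, list $B\setminus A=\{y_1,\dots,y_r\}$ and add these elements one at a time, to $A\cap B$ and to $A$ in parallel. Each step gives
\[
\frac{\Psi(C\cup\{y\})}{\Psi(C)}\ge\frac{\Psi(C'\cup\{y\})}{\Psi(C')}\qquad\text{for }C\subseteq C'.
\]
This follows from the local inequality by adding the elements of $C'\setminus C$ one at a time. Multiplying these ratio inequalities telescopes to $\Psi(B)/\Psi(A\cap B)\ge\Psi(A\cup B)/\Psi(A)$, which is \eqref{global:eq-log-submodularity}.

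For strictness when $A,B$ are disjoint and nonempty, choose $x\in A$ and $y\in B$ adjacent in $A\cup B$; such a pair exists since both sets are nonempty. Order the insertions so that $y$ is added first and $x$ is among the elements of $C'\setminus C$ in the comparison step. Then the local strict case applies at that step, and strictness propagates through the product.

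\textbf{Consequences.} If $A,B$ are tight, then
\[
V_\lambda(h)^2=\Psi(A)\Psi(B)\ge\Psi(A\cap B)\Psi(A\cup B)\ge V_\lambda(h)^2.
\]
Hence equality holds throughout and both $A\cap B$ and $A\cup B$ are tight. For disjoint nonempty tight sets the first inequality would be strict, which is a contradiction.

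\textbf{Main obstacle.} I expect the delicate step to be the bookkeeping in the adjacent case: confirming that the neighbour of $x$ really changes only on the side facing $y$, and that the sentinel factor $\lambda+s_{k+1}$ behaves like a ``$Y$'' parameter with $\mu=\lambda\ge a$. The comparison $Y_0>Y_1$ uses exactly $\epsilon/c<1$.
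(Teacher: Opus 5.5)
Your proposal is correct and follows essentially the same route as the paper. Both insert the elements of $B\setminus A$ one at a time into $A\cap B$ and into $A$, use that the insertion ratio $I(\ell,y;b)$ from \eqref{global:eq-insertion-ratio} is increasing in both gap parameters, and get strictness from the first insertion comparing $\varnothing$ with $A$; the only difference is that you derive the nested-set comparison $C\subseteq C'$ by chaining a two-element exchange inequality and locate strictness at an adjacent pair, whereas the paper compares the neighbour gaps for $C$ and $C'$ directly.
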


\begin{proof}[Proof of \cref{global:lem-log-submodularity}]
Consider inserting the same checkpoint into two sets, where the second contains the first.
Additional selected checkpoints decrease the sum $\ell$ in \eqref{global:eq-insertion-ratio}.
They also decrease the right-hand parameter $y$.
Indeed, selecting a closer step on the right, of size $b'$, replaces its full contribution to the sum of intervening stepsizes by $\epsilon b'/c$, and $\epsilon/c<1$.
If a checkpoint is added to the right where there was none, replacing $\lambda$ by $a$ can only decrease this parameter.
Thus both arguments of $I$ are no larger for the second set.

Add the elements of $B\setminus A$, one at a time, to each of $A\cap B$ and $A$.
At every insertion the multiplier for the second set is no larger.
Multiplying these inequalities gives \eqref{global:eq-log-submodularity}.
If $A$ and $B$ are disjoint and nonempty, the first insertion already compares the empty set with $A$.
Because $A$ contains a checkpoint, positivity of the schedule makes at least one of $\ell$ and $y$ strictly smaller for insertion into $A$ than for insertion into $\varnothing$.
This proves strictness.

If both $A$ and $B$ attain the minimum value $V:=V_\lambda(h)$, then
\[
  V^2\ge\Psi_\lambda(A\cap B;h)\Psi_\lambda(A\cup B;h)\ge V^2.
\]
Both sets on the right must therefore also be tight.
Strictness rules out disjoint nonempty tight sets.
\end{proof}

\begin{lemma}
\label{global:lem-extremal-schedule}
For every $n\ge1$ and $\lambda\ge a$, $U_n(\lambda)$ is finite, $U_n(\lambda)>U_{n-1}(\lambda)$, and the supremum defining $U_n(\lambda)$ is attained by a positive finite schedule.
At every maximizing schedule, the empty set and at least one nonempty checkpoint set are tight.
\end{lemma}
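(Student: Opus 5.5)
We argue by induction on $n$, using the decomposition in \cref{global:lem-value-decomposition} together with the log-submodularity of \cref{global:lem-log-submodularity}.

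\textbf{Finiteness.} We first show $U_n(\lambda)<\infty$ by induction. Fix $h$ and let $M$ be the largest step, at index $t$. If $M\le a$, the empty set gives $V_\lambda(h)\le\lambda+na$. If $M>a$, the term indexed by $t$ in \eqref{global:eq-value-decomposition} gives
\[
V_\lambda(h)\le\frac{c}{M}\,U_{t-1}\!\left(a+\frac{\epsilon M}{c}\right)U_{n-t}(\lambda).
\]
This requires controlling the growth of $U_{t-1}(\mu)$ in $\mu$. Since $\Psi_\mu$ is affine in $\mu$ with slope at most $\Psi_\mu/\mu$ for $\mu\ge a$, we get $U_m(\mu)\le(\mu/a)\,U_m(a)$. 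The factor $M$ then cancels, leaving a bound independent of $M$. Together with the induction hypothesis this gives a uniform bound on $U_n(\lambda)$.

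\textbf{Strict monotonicity.} To show $U_n(\lambda)>U_{n-1}(\lambda)$, take a near-maximizer of length $n-1$ and append a tiny step $\eta>0$. Appending can only increase every $\Psi_\lambda(T;\cdot)$ that does not select the new step: the last sum $s_{k+1}$ grows by $\eta$, so such terms increase by at least $\eta\prod(\cdot)\ge\eta\epsilon^k$. A set selecting the new step is multiplied by a factor of order $1/\eta$, so it is irrelevant. With some care this gives a strict increase of the supremum. The care is needed because near-maximizers are only approximate, so we use compactness in the next step to replace them with actual maximizers.

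\textbf{Attainment.} Since the minimum over finitely many sets of continuous functions is continuous, $V_\lambda$ is continuous on $[0,\infty)^n$ with the extended-value convention; sets selecting zero steps are $+\infty$ but never the minimum. The task is therefore to restrict to a compact region. Steps larger than some bound $B^*$ can be shown to make $V_\lambda$ small: selecting such a step gives a factor $(c/b)(a+\epsilon b/c+s)$, controlled by the finiteness estimate. Hence near-suprema lie in a compact box, and a maximizer exists.

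\textbf{Positivity.} A maximizer with a zero step effectively has length at most $n-1$, since zero steps can be deleted without changing $V_\lambda$. Its value is then at most $U_{n-1}(\lambda)<U_n(\lambda)$, a contradiction, so maximizers are positive.

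\textbf{Tightness.} Let $h$ be a maximizer. We argue by perturbation.
\begin{itemize}
\item If the empty set were not tight, all tight sets would be nonempty. By \cref{global:lem-log-submodularity} their intersection $T_0$ is tight and nonempty. Decrease a non-selected step if one exists, or otherwise scale appropriately. Choose the perturbation to increase $\Psi$ on every tight set: for instance, increasing a selected step $b\in T_0$ changes $\epsilon+c(a+s)/b$ but also the next $s$. The aim is to find a direction increasing all tight values while keeping non-tight ones above, contradicting maximality. Concretely, slightly increasing all non-selected steps after the last element of $T_0$ increases $\lambda+s_{k+1}$ only for sets whose last checkpoint precedes them, so this must be arranged carefully.
\item If only the empty set were tight, increasing any step increases $\Psi_\lambda(\varnothing)=\lambda+\sum h_t$. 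By continuity, a small increase keeps the empty set the unique minimizer and strictly increases $V_\lambda$, again a contradiction.
\end{itemize}

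\textbf{Main obstacle.} The hardest part is the first tightness case: building a perturbation that simultaneously increases all tight nonempty sets. The lattice structure from \cref{global:lem-log-submodularity} makes this feasible. The plan is to use the minimal tight set $T_0$ and perturb the block after its last checkpoint, mimicking \citet[Lemma~C.2]{JungChoYun2026}.
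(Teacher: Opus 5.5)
\textbf{There is a genuine gap in the step you flag as the main obstacle: showing that the empty set is tight at a maximizer.} Every perturbation you consider moves in the wrong direction.

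\begin{enumerate}
\item \emph{Decreasing a non-selected step.} Every unselected step of a tight set $T$ lies in one of its intervening sums $s_i$ or $s_{k+1}$. Decreasing it therefore lowers $\Psi_\lambda(T;h)$.
\item \emph{Increasing a step $b=h_t$ with $t\in T_0$.} This lowers the factor $\epsilon+c(a+s)/b$ in every tight set. Contrary to your remark, it does not change ``the next $s$'': a selected step never enters an intervening sum of a set that selects it.
\item \emph{Perturbing the block after the last element of $T_0$.} This fails as well. Other tight sets may select steps in that block, and the block may be empty.
\end{enumerate}

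The missing idea is the reverse of your second attempt: take $t\in T_0$ and \emph{decrease} $h_t$ slightly. Since $T_0$ is contained in every tight set, $h_t$ enters each tight value only through its own factor $\epsilon+c(a+s)/h_t$. That factor strictly increases, while all other factors are unchanged. The finitely many non-tight sets keep strict slack by continuity, because the maximizer is positive and so the perturbed schedule stays positive. Hence $V_\lambda$ strictly increases, contradicting maximality. No block perturbation is needed; you only need $T_0$ nonempty and tight, which you already have.

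\textbf{Your compactness claim in the attainment step is also not justified as written.} Selecting a single step $b>B^*$ gives the factor $\epsilon+c(a+s)/b$. This is close to $\epsilon$ only when the unselected mass $s$ before $b$ is negligible relative to $b$, and a single cutoff does not guarantee that. For example, $j-1$ earlier steps just below $B^*$ followed by $b$ just above it give a factor near $\epsilon+c(j-1)$. Your finiteness bound $(c/M+\epsilon/a)\,U_{t-1}(a)\,U_{n-t}(\lambda)$ also gives no comparison with $U_n(\lambda)$.

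What you need is scale separation:
\begin{enumerate}
\item Along a maximizing sequence, pass to a subsequence converging in $[0,\infty]^n$.
\item Let $j$ be the first coordinate with infinite limit, and select $j$ with no earlier checkpoint. Its factor then tends to $\epsilon$.
\item Conclude $\limsup V_\lambda\le\epsilon\, U_{n-j}(\lambda)\le\epsilon\, U_{n-1}(\lambda)<U_n(\lambda)$.
\end{enumerate}

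\textbf{The rest of your proposal is sound.}
\begin{itemize}
\item Your finiteness argument, via the decomposition and $U_m(\mu)\le(\mu/a)\,U_m(a)$, is correct. The paper instead selects the largest step with no earlier checkpoint, which avoids the parameter shift.
\item Strict monotonicity works once you take the length-$(n-1)$ maximizer from the induction hypothesis, not from the attainment step at level $n$.
\item Your positivity argument and the ``only the empty set is tight'' case match the paper.
\end{itemize}
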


\begin{proof}[Proof of \cref{global:lem-extremal-schedule}]
Appending zero steps does not change $V_\lambda$, since those steps cannot be selected and contribute nothing to any sum of intervening stepsizes.
Hence $U_m(\lambda)\le U_j(\lambda)$ whenever $m\le j$.
We induct on $n$, starting with $U_0(\lambda)=\lambda$.

First, $U_n(\lambda)$ is finite.
For an arbitrary schedule, write $M=\max_t h_t$.
If $M\le a$, then $\Psi_\lambda(\varnothing;h)\le\lambda+na$.
If $M>a$, choose $j$ with $h_j=M$, select $j$ and no earlier checkpoint, and minimize over checkpoints in $h_{j+1:n}$.
This gives
\[
  V_\lambda(h)
  \le\left(\epsilon+\frac{c(a+\sum_{t<j}h_t)}{M}\right)
       U_{n-j}(\lambda)
  \le(\epsilon+cn)U_{n-1}(\lambda)<\infty.
\]
Here $\sum_{t<j}h_t\le(n-1)M$ and $a/M<1$.

Next, take a maximizing $(n-1)$-step schedule supplied by induction and append a positive step $t$.
For every checkpoint set that omits the new step, $\Psi_\lambda$ strictly increases, because its final sum of unselected stepsizes increases by $t$.
For every checkpoint set that selects the new step, $\Psi_\lambda$ tends to infinity as $t\downarrow0$.
There are finitely many checkpoint sets, so for small enough $t>0$ every such value exceeds $U_{n-1}(\lambda)$.
Consequently $U_n(\lambda)>U_{n-1}(\lambda)$.

To prove attainment, choose a maximizing sequence.
If it has an unbounded subsequence, pass to a further subsequence converging coordinatewise in the compact extended interval $[0,\infty]$ and having at least one infinite coordinate limit.
Let $j$ be the first such coordinate.
All coordinates before $j$ are bounded.
Selecting $j$ and minimizing over checkpoints in the suffix, as above, yields
\[
  \limsup V_\lambda(h)
  \le\epsilon U_{n-j}(\lambda)
  \le\epsilon U_{n-1}(\lambda)<U_n(\lambda),
\]
a contradiction.
Thus the maximizing sequence is bounded, and a subsequence converges to $\bar h\in[0,\infty)^n$.

Suppose $\bar h$ has a zero coordinate.
Delete all of its zero coordinates to obtain a schedule $\bar h^+$ of length $m<n$, and fix a minimizing checkpoint set for $\bar h^+$.
Use the corresponding indices in the converging sequence, omitting every coordinate with zero limit.
The selected steps have positive limits, and the contributions of omitted vanishing steps to sums of intervening stepsizes tend to zero.
The corresponding values of $\Psi_\lambda$ converge to $V_\lambda(\bar h^+)$.
Therefore
\[
  U_n(\lambda)\le V_\lambda(\bar h^+)
  \le U_m(\lambda)\le U_{n-1}(\lambda),
\]
again a contradiction.
The limit is positive.
On the positive orthant, $V_\lambda$ is the minimum of finitely many continuous functions, so it is continuous and $\bar h$ attains $U_n(\lambda)$.
The same argument after deleting zero steps shows that every maximizing schedule is positive.

It remains to establish tightness.
If the empty set were not tight, all tight sets would be nonempty.
Their intersection is nonempty and tight by \cref{global:lem-log-submodularity}.
Decreasing the stepsize at one index in their common intersection strictly increases $\Psi_\lambda$ at every initially tight set. That step has factor $\epsilon+c(a+s)/b$ and contributes to no sum of intervening stepsizes in those sets.
By continuity, the value at every other set remains strictly above the original minimum under a sufficiently small perturbation.
The minimum value would then increase, contradicting maximality.
Hence the empty set is tight.
If it were the only tight set, increasing any step slightly would increase $\Psi_\lambda(\varnothing;h)$ while preserving the slack of all other sets, giving the same contradiction.
A nonempty tight set therefore exists.
\end{proof}

We next prove the splitting inequality for the intervening updates, which is the key step in the proof of \cref{thm:weighted-recursion}.
We adapt the splitting argument in the proof of \citet[Lemma~3.2, Appendix~C.6]{JungChoYun2026}, retaining a correction term to handle the shifted parameters.

\begin{lemma}
\label{global:lem-gap-splitting}
Let $n\ge1$, $\lambda\ge a$, and let $h$ attain $U_n(\lambda)$.
Choose a nonempty tight set $T=\{t_1<\cdots<t_k\}$ and put $W=U_n(\lambda)$.
For $1\le i\le k+1$, let $h^{(i)}$ be the sequence of stepsizes indexed by $t_{i-1}<t<t_i$.
Then
\begin{equation}
  V_{a+\epsilon b_i/c}(h^{(i)})=a+s_i+\epsilon b_i/c\quad(1\le i\le k),
  \qquad
  V_\lambda(h^{(k+1)})=\lambda+s_{k+1}.
  \label{global:eq-gap-identities}
\end{equation}
If $\nu\in[1/2,1)$ satisfies \eqref{global:eq-scalar-hypothesis} with \eqref{global:eq-kappa-choice}, then
\begin{equation*}
  W^\nu+\sum_{i=1}^k(\epsilon b_i/c)^\nu
  \le(\lambda+s_{k+1})^\nu+\sum_{i=1}^k(a+s_i+\epsilon b_i/c)^\nu.
\end{equation*}
\end{lemma}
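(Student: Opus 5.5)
The identities \eqref{global:eq-gap-identities} come from tightness of $T$; the inequality comes from peeling off the checkpoints of $T$ from the right, one per application of the scalar hypothesis \eqref{global:eq-scalar-hypothesis}.

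\emph{Step 1: the gap identities.} Fix $1\le i\le k$. Evaluating $V$ at the empty checkpoint set gives $V_{a+\epsilon b_i/c}(h^{(i)})\le a+s_i+\epsilon b_i/c$. Suppose the inequality were strict, witnessed by a checkpoint set $T'$ inside the $i$-th gap. Apply the factorization in the proof of \cref{global:lem-value-decomposition} at $t_i$, and then repeatedly at the other checkpoints. This writes $\Psi_\lambda(T;h)$ as a product in which the $i$-th gap contributes exactly the factor $a+s_i+\epsilon b_i/c=\Psi_{a+\epsilon b_i/c}(\varnothing;h^{(i)})$. Replacing that factor by $\Psi_{a+\epsilon b_i/c}(T';h^{(i)})$ gives $\Psi_\lambda(T\cup T';h)<\Psi_\lambda(T;h)=W=V_\lambda(h)$, which contradicts the definition of $V_\lambda$. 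The same argument with parameter $\lambda$ handles the last gap $h^{(k+1)}$.

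\emph{Step 2: set up the recursion.} Write $x_i:=a+s_i+\epsilon b_i/c\ge a$ and $B_i:=(c-\epsilon)b_i/c$, so that $\kappa B_i^\nu=(\epsilon b_i/c)^\nu$ by \eqref{global:eq-kappa-choice}. Define $y_{k+1}:=\lambda+s_{k+1}$ and, for $1\le i\le k$,
\[
  y_i:=\lambda+\sum_{t>t_{i-1}}h_t = y_{i+1}+s_i+b_i = x_i+y_{i+1}+B_i-a .
\]
Then $y_1=W$. If also $y_iB_i=(c-\epsilon)x_iy_{i+1}$, equivalently $y_i=(c/b_i)x_iy_{i+1}$, then each triple $(x_i,y_{i+1},B_i)$ satisfies the scalar hypothesis with $w=y_i$. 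Hence $y_i^\nu+(\epsilon b_i/c)^\nu\le x_i^\nu+y_{i+1}^\nu$ for each $i$, and telescoping from $i=k$ down to $i=1$ gives exactly the claimed inequality.

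\emph{Step 3: the multiplicative relation $y_i=(c/b_i)x_iy_{i+1}$.} Let $T_i:=\{t_i,\dots,t_k\}$ and consider the suffix schedule $h_{t_{i-1}+1:n}$. On this suffix, the relation says that $T_i$ and the empty set have the same value of $\Psi_\lambda$. Tightness of both $\varnothing$ and $T$ for the whole schedule gives this for $i=1$, that is, $W=\Psi_\lambda(T;h)=(c/b_1)x_1y_2$, but only in product form over all $i$.

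To get it for each $i$ separately, I would use the extremality of $h$, adapting the perturbation argument of \citet[Appendix~C.6]{JungChoYun2026}. Suppose the relation fails at some index $i$. Then perturb the extremal schedule by moving a small amount of mass between $b_i$ and the neighbouring gap sums $s_i,s_{i+1}$, or by rescaling the suffix after $t_{i-1}$. The perturbation is chosen so that both $\Psi_\lambda(\varnothing;h)$ and $\Psi_\lambda(T;h)$ strictly increase, while all other checkpoint sets keep their positive slack by continuity. This would strictly increase $V_\lambda(h)$ and contradict maximality (\cref{global:lem-extremal-schedule}).

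\emph{Fallback for Step 3.} If exact equality cannot be forced, I would instead prove that the scalar hypothesis persists under an inequality $wB\ge(c-\epsilon)xy$, with $w$ linked to $B$ as before, by a monotonicity argument in $B$. The correct direction would then follow from $\Psi_\lambda$-minimality of the empty set on each suffix. Tightness of $\varnothing$ for the full schedule, combined with Step 1 and log-submodularity (\cref{global:lem-log-submodularity}), should give $y_i\le(c/b_i)x_iy_{i+1}$.

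\emph{Main obstacle.} Step 3 is the main obstacle: obtaining the per-checkpoint relation, or a one-sided version compatible with the monotonicity of \eqref{global:eq-scalar-hypothesis}, from the single global tightness of $T$ and $\varnothing$. The remaining steps are bookkeeping.
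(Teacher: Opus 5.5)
\textbf{The gap is in Step~3.} Your Step~1 is exactly the paper's argument, and the telescoping in Step~2 is correct algebra. But Step~3 is not just unproven: as stated, it is false for a general tight $T$. Multiply your relation at $i$ by $\prod_{j<i}(c/b_j)x_j$. It becomes $\Psi_\lambda(\{t_1,\dots,t_{i-1}\};h)=\Psi_\lambda(\{t_1,\dots,t_i\};h)$. So the relations for all $i$ together say exactly that every prefix of $T$ is tight.

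Already $i=1$ asks that the singleton $\{t_1\}$ be tight. Your claim that tightness of $\varnothing$ and $T$ gives $W=(c/b_1)x_1y_2$ is incorrect. In fact $\Psi_\lambda(T;h)=(c/b_1)x_1\Psi_\lambda(T\setminus\{t_1\};h_{t_1+1:n})$, and this last factor can be strictly smaller than $y_2$. Suppose some $\{t_j\}$ with $j\ge2$ is tight. Then $\{t_1\}$ cannot be tight, since two nonempty tight sets are never disjoint (\cref{global:lem-log-submodularity}). So the relation fails for this $T$, yet the lemma is asserted for every nonempty tight $T$.

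Such a failure only means that some prefix of $T$ has slack, which maximality does not forbid, so there is no contradiction for a perturbation to reach. Your perturbation also assumes that every set other than $\varnothing$ and $T$ keeps positive slack. Other sets, such as $\{t_j\}$ or unions and intersections with other tight sets, may also be tight, and they too would have to increase.

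The fallback is no weaker than the main route. The ratios $(c/b_i)x_iy_{i+1}/y_i$ multiply to $\Psi_\lambda(T;h)/\Psi_\lambda(\varnothing;h)=1$. Hence the one-sided bounds $y_i\le(c/b_i)x_iy_{i+1}$ at every $i$ already force equality everywhere. Its premise also fails: $\varnothing$ need not be optimal on suffixes. When $\{t_1\}$ is not tight, $\Psi_\lambda(T\setminus\{t_1\};h_{t_1+1:n})<y_2$. Finally, the scalar inequality is preserved when $B$ decreases, that is, under $wB\le(c-\epsilon)xy$, not $wB\ge(c-\epsilon)xy$.

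\textbf{What the paper does instead.} It never needs a per-checkpoint relation, or even a tight singleton, for the actual schedule.
\begin{itemize}
\item It substitutes $B_i=(c-\epsilon)b_i/c$, $g_i=s_i+\epsilon b_i/c$, and $\Lambda=\lambda+s_{k+1}$. This preserves $\Psi_\lambda$ on all subsets of $T$ (\cref{global:prop-cost-preservation}). Each selected factor becomes $(c-\epsilon)(a+L_j)/B_{i_j}$, which depends on the selected sizes only through $1/B$. In the original variables, $\epsilon+c(a+s)/b$ is not proportional to $1/b$, so trading size between two checkpoints is hard to control.
\item It then proves \cref{lem:weighted-splitting}, which covers every $B\ge0$ for which $\varnothing$ is optimal in the auxiliary problem. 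The actual $B$ is just one such point. The proof is by induction on $k$: with $g,\Lambda$ fixed, maximize $J(B)=W^\nu+\kappa\sum_iB_i^\nu$ over this feasible set and remove zero coordinates.
\item At a maximizer, log-submodularity plus two perturbations force the inclusion-minimal nonempty tight set to be a singleton $\{r\}$. The perturbations are $u\mapsto u-t$, $v\mapsto v+\theta t$ with $\theta\in(D_*,A_*)$, and a second-order one when the sizes are equal.
\item The scalar hypothesis is used only at that one $r$, which may sit anywhere in $T$. The problem then factorizes into a left problem with parameter $a+g_r$ and a right problem with parameter $\Lambda$. Both inherit empty-set optimality, so the induction hypothesis applies to them, and it holds for all feasible configurations, not just maximizers.
\end{itemize}

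This relaxation is the idea your proposal lacks. Restricting to a tight singleton $T$ would make your argument trivial (with $k=1$ it is exactly the scalar hypothesis). But a tight singleton at the actual extremal schedule is precisely what nobody proves. The paper obtains one only at a maximizer of the relaxed problem.
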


\begin{proof}[Proof of \cref{global:lem-gap-splitting}]
We first prove \eqref{global:eq-gap-identities} using tightness of $T$.
For any checkpoint set $S$ among the updates in $h^{(i)}$, with its indices understood in the original schedule, direct factorization gives
\[
  \frac{\Psi_\lambda(T\cup S;h)}{\Psi_\lambda(T;h)}
  =\frac{\Psi_{a+\epsilon b_i/c}(S;h^{(i)})}{a+s_i+\epsilon b_i/c}
  \qquad(1\le i\le k).
\]
The left side is at least one and the empty set of additional checkpoints attains equality.
The same argument for the updates after the last checkpoint replaces the right side by $\Psi_\lambda(S;h^{(k+1)})/(\lambda+s_{k+1})$.
This proves \eqref{global:eq-gap-identities}.

For the splitting inequality, we make a substitution and use two auxiliary lemmas, proved below.
For the selected stepsizes $b_i$ and total intervening stepsizes $s_i$, put
\begin{equation}
  B_i:=\frac{(c-\epsilon)b_i}{c},\qquad g_i:=s_i+\epsilon b_i/c,\qquad
  \Lambda:=\lambda+s_{k+1}.
  \label{global:eq-cost-move}
\end{equation}
These variables satisfy
\[
  g_i+B_i=s_i+b_i,
  \qquad
  \frac{(c-\epsilon)(a+g_i)}{B_i}=\frac{c(a+s_i)}{b_i}+\epsilon.
\]
When checkpoint $i$ is skipped, the first identity preserves its contribution to the sums of stepsizes.
When it is selected, the second preserves its factor in $\Psi_\lambda$.
This substitution changes only the expression for $\Psi_\lambda$, not the GD schedule.

For the splitting argument, allow arbitrary $g_1,\ldots,g_k\ge0$, $B_1,\ldots,B_k\ge0$, and $\Lambda\ge a$.
For a nonempty $S=\{i_1<\cdots<i_m\}\subseteq\{1,\ldots,k\}$, with $i_0=0$, set
\[
\begin{aligned}
  L_j(S)&:=\sum_{r=i_{j-1}+1}^{i_j}g_r
          +\sum_{r=i_{j-1}+1}^{i_j-1}B_r,\\
  Y(S)&:=\Lambda+\sum_{r>i_m}(g_r+B_r).
\end{aligned}
\]
Define the auxiliary function
\begin{align}
  \widetilde\Psi_\Lambda(S;g,B)
    &:=Y(S)\prod_{j=1}^m\frac{(c-\epsilon)(a+L_j(S))}{B_{i_j}},
    \label{global:eq-pure-cost}\\
  \widetilde\Psi_\Lambda(\varnothing;g,B)
    &:=\Lambda+\sum_{i=1}^k(g_i+B_i).
    \label{global:eq-pure-empty}
\end{align}
Set $\widetilde\Psi_\Lambda(S;g,B)=+\infty$ if $S$ contains an index with $B_i=0$.

By \cref{global:prop-cost-preservation}, the substitution preserves $\Psi_\lambda$ for every subset of $T$.
Since the empty set minimizes $\Psi_\lambda$ by \cref{global:lem-extremal-schedule}, it also minimizes $\widetilde\Psi_\Lambda$.
Applying \cref{lem:weighted-splitting} gives
\begin{equation*}
  W^\nu+\kappa\sum_{i=1}^k\bigl((c-\epsilon)b_i/c\bigr)^\nu
  \le(\lambda+s_{k+1})^\nu+
      \sum_{i=1}^k(a+s_i+\epsilon b_i/c)^\nu.
\end{equation*}
By \eqref{global:eq-kappa-choice}, $\kappa\bigl((c-\epsilon)b_i/c\bigr)^\nu=(\epsilon b_i/c)^\nu$.
This proves \eqref{global:eq-gap-splitting}.
\end{proof}

We now prove the two auxiliary lemmas used above.
For the first lemma, retain the schedule and variables in \eqref{global:eq-cost-move}.

\begin{lemma}
\label{global:prop-cost-preservation}
For every $S\subseteq\{1,\ldots,k\}$,
\begin{equation*}
  \widetilde\Psi_\Lambda(S;g,B)
  =\Psi_\lambda(\{t_i:i\in S\};h).
\end{equation*}
\end{lemma}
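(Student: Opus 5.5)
The identity is a direct bookkeeping check: I will show that each factor and each sum in $\widetilde\Psi_\Lambda(S;g,B)$ matches the corresponding factor or sum in $\Psi_\lambda(T_S;h)$, where $T_S:=\{t_i:i\in S\}$. Here $g,B,\Lambda$ are the specific values in \eqref{global:eq-cost-move}.

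First I treat $S=\varnothing$. Then $\Psi_\lambda(\varnothing;h)=\lambda+\sum_t h_t$. The steps of $h$ split into the selected steps $b_1,\dots,b_k$ of $T$, the blocks $h^{(i)}$ with sums $s_1,\dots,s_k$, and the final block with sum $s_{k+1}$. So
\[
\lambda+\sum_t h_t=\lambda+s_{k+1}+\sum_{i=1}^k(s_i+b_i).
\]
The identity $g_i+B_i=s_i+b_i$ turns the right side into $\Lambda+\sum_i(g_i+B_i)$, which is \eqref{global:eq-pure-empty}.

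Next let $S=\{i_1<\dots<i_m\}$ be nonempty. The checkpoints of $T_S$ are $t_{i_1}<\dots<t_{i_m}$, with selected sizes $b_{i_j}$. Since $h$ is positive and $B_{i_j}>0$, both sides are finite.

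The intervening sum before $t_{i_j}$ runs over all steps strictly between $t_{i_{j-1}}$ and $t_{i_j}$ (with $t_{i_0}=t_0=0$). These steps consist of the blocks $h^{(r)}$ for $i_{j-1}<r\le i_j$ and the skipped checkpoints $b_r$ for $i_{j-1}<r<i_j$. The intervening sum is therefore
\[
\sigma_j=\sum_{r=i_{j-1}+1}^{i_j}s_r+\sum_{r=i_{j-1}+1}^{i_j-1}b_r .
\]
Substituting $s_r+b_r=g_r+B_r$ for each skipped $r$ gives
\[
\sigma_j=L_j(S)-\frac{\epsilon b_{i_j}}{c},
\]
because the last block contributes $s_{i_j}=g_{i_j}-\epsilon b_{i_j}/c$. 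The factor of checkpoint $i_j$ in $\Psi_\lambda$ is then
\[
\epsilon+\frac{c(a+\sigma_j)}{b_{i_j}}
=\frac{c\bigl(a+L_j(S)\bigr)}{b_{i_j}}
=\frac{(c-\epsilon)\bigl(a+L_j(S)\bigr)}{B_{i_j}},
\]
where the last step uses $B_{i_j}=(c-\epsilon)b_{i_j}/c$. This is exactly the $j$-th factor in \eqref{global:eq-pure-cost}.

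The tail sum after $t_{i_m}$ consists of the blocks and skipped checkpoints with index $r>i_m$, together with the final block $s_{k+1}$. This gives
\[
\lambda+s_{k+1}+\sum_{r>i_m}(s_r+b_r)=\Lambda+\sum_{r>i_m}(g_r+B_r)=Y(S).
\]
Multiplying the matched factors proves the lemma.

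The only delicate point is the index bookkeeping. One must check that the last block before each selected checkpoint carries the $\epsilon b/c$ shift, while skipped checkpoints are absorbed through $g+B=s+b$; I expect this to be the main place where care is needed.
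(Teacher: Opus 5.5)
Your proposal is correct and follows essentially the same route as the paper, which also checks the empty case via $g_i+B_i=s_i+b_i$, writes $L_j(S)=\widetilde s_j+\epsilon b_{i_j}/c$ with $\widetilde s_j$ the intervening sum between consecutive selected checkpoints, and then matches each checkpoint factor and the final factor directly. Your decomposition of the intervening and tail sums into blocks and skipped checkpoints spells out bookkeeping the paper leaves implicit; your finiteness remark is not needed, because $B_i=0$ exactly when $b_i=0$ and so the two $+\infty$ conventions agree anyway.
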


\begin{proof}[Proof of \cref{global:prop-cost-preservation}]
Skipping checkpoint $i$ leaves its contribution to a later sum of intervening stepsizes unchanged, since $g_i+B_i=s_i+b_i$.
If checkpoint $i_j$ is selected, let $\widetilde s_j:=\sum_{t_{i_{j-1}}<t<t_{i_j}}h_t$, with $t_{i_0}=0$.
Then $L_j(S)=\widetilde s_j+\epsilon b_{i_j}/c$.
The corresponding auxiliary factor is therefore
\[
  \frac{(c-\epsilon)(a+L_j(S))}{B_{i_j}}
  =\frac{c(a+\widetilde s_j+\epsilon b_{i_j}/c)}{b_{i_j}}
  =\frac{c(a+\widetilde s_j)}{b_{i_j}}+\epsilon.
\]
The final factors also agree.
For $S=\varnothing$, the equality follows directly from $g_i+B_i=s_i+b_i$.
\end{proof}

\begin{lemma}[Splitting with a correction term]
\label{lem:weighted-splitting}
Fix $\nu\in[1/2,1)$ and $\kappa\ge0$.
Suppose that, for every $x,y\ge a$ and $B>0$, the relations $w=x+y+B-a$ and $wB=(c-\epsilon) xy$ imply \eqref{global:eq-scalar-hypothesis}.
For any auxiliary problem \eqref{global:eq-pure-cost}--\eqref{global:eq-pure-empty} in which the empty set is a minimizer, write $W=\widetilde\Psi_\Lambda(\varnothing;g,B)$.
Then
\begin{equation}
  W^\nu+\kappa\sum_{i=1}^kB_i^\nu
  \le\Lambda^\nu+\sum_{i=1}^k(a+g_i)^\nu.
  \label{global:eq-weighted-splitting}
\end{equation}
\end{lemma}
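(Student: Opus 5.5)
We prove \eqref{global:eq-weighted-splitting} by induction on $k$, the number of candidate checkpoints in the auxiliary problem. For $k=0$ the empty set is the only set and $W=\Lambda$, so there is nothing to prove. For $k\ge1$, the strategy is to merge two adjacent blocks into one and invoke the scalar hypothesis \eqref{global:eq-scalar-hypothesis} once for each merge. This follows the perturbation scheme of \citet[Appendix~C.6]{JungChoYun2026}, but now we also keep the $\kappa B^\nu$ term.

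\textbf{Single merge.} Take the last candidate checkpoint $k$ and set $x:=a+g_k$ and $y:=\Lambda$, so that $x,y\ge a$. Selecting only $\{k\}$ is not better than the empty set, which gives
\[
  \Lambda+\sum_{i\le k}(g_i+B_i)\le\Bigl(a+\sum_{i<k}(g_i+B_i)+g_k\Bigr)\frac{(c-\epsilon)y}{B_k}.
\]
We consider first the case where the empty set and $\{k\}$ are both tight, which we obtain after a perturbation. If we put $w:=x+y+B_k-a$, this is the relation $wB_k=(c-\epsilon)xy$ in the case $k=1$. For general $k$, we form a reduced problem with $k-1$ candidates in which block $k$ and the tail are merged into a new tail parameter $\Lambda':=g_k+B_k+\Lambda=w$. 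Every checkpoint set $S\subseteq\{1,\dots,k-1\}$ has the same value in the reduced problem as in the original, since checkpoint $k$ is skipped in both. The empty set is therefore still a minimizer of the reduced problem, and its value is $W$. The induction hypothesis gives
\[
  W^\nu+\kappa\sum_{i<k}B_i^\nu\le w^\nu+\sum_{i<k}(a+g_i)^\nu.
\]
The scalar hypothesis then gives $w^\nu+\kappa B_k^\nu\le x^\nu+y^\nu=(a+g_k)^\nu+\Lambda^\nu$, and combining the two inequalities yields \eqref{global:eq-weighted-splitting}.

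\textbf{Removing the tightness assumption.} This is the main obstacle. In general $\{k\}$ need not be tight, so $wB_k\le(c-\epsilon)xy$ may hold only as an inequality, and the scalar relation does not apply directly. The plan is to deform the data monotonically to reach equality without breaking the conclusion. First, lower $\Lambda$ continuously toward $a$. This decreases $(c-\epsilon)xy/B_k$ at the linear rate $(c-\epsilon)x/B_k$, while $w$ decreases only at rate $1$.

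If equality is reached first, we are in the tight case. The question is then whether the inequality we proved for the smaller $\Lambda$ implies the one for the original $\Lambda$. The left side changes through $W$, which has derivative $1$ in $\Lambda$, and $W^\nu$ grows at rate $\nu W^{\nu-1}$. The right side grows at rate $\nu\Lambda^{\nu-1}$. Since $W\ge\Lambda$ and $\nu<1$, the left side grows more slowly. Restoring $\Lambda$ therefore preserves \eqref{global:eq-weighted-splitting}. The emptyset must remain a minimizer after each modification, and we will check this via the fact that insertion multipliers \eqref{global:eq-insertion-ratio} are monotone in $y$.

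If $\Lambda$ reaches $a$ with strict inequality still holding, we instead increase $B_k$, or shrink $g_k$, until equality holds. We use the same derivative comparison: $W^\nu$ versus $(a+g_k)^\nu+\kappa$-terms. This requires $\nu\ge1/2$ to control $\kappa B_k^\nu$ against $W^\nu$, which is where the restriction $\nu\in[1/2,1)$ is expected to enter.

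We expect the bookkeeping that keeps the empty set a minimizer during these deformations to be the delicate point. We plan to handle it with the log-submodularity argument of \cref{global:lem-log-submodularity}, applied to the auxiliary cost $\widetilde\Psi_\Lambda$.
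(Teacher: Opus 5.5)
\paragraph{Verdict.} Your proposal has a genuine gap, and it already shows up in the tight case when $k\ge2$ (your $k=1$ argument is fine).

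\paragraph{The tight case uses the wrong scalar triple.} Tightness of $\{k\}$ reads $WB_k=(c-\epsilon)x_k\Lambda$, where $x_k=a+\sum_{i\le k}g_i+\sum_{i<k}B_i$ and $W=x_k+\Lambda+B_k-a$. So the scalar relation holds for $(x_k,\Lambda,B_k)$ with $w=W$. It does not hold for your triple $(a+g_k,\Lambda,B_k)$ with $w=\Lambda+g_k+B_k$.

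With $\Sigma:=\sum_{i<k}(g_i+B_i)$, tightness gives $wB_k-(c-\epsilon)(a+g_k)\Lambda=\Sigma\bigl((c-\epsilon)\Lambda-B_k\bigr)$. This is positive whenever $\Sigma>0$, because $B_k=(c-\epsilon)x_k\Lambda/W<(c-\epsilon)\Lambda$. Your triple therefore lies strictly past the scalar relation, and the bound $w^\nu+\kappa B_k^\nu\le(a+g_k)^\nu+\Lambda^\nu$ you need can fail.

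Here is a concrete instance. Take $c-\epsilon=1$, $\kappa=0$, $\nu=1/\psil$; in this regime the hypothesis holds by the silver inequality. Set $a=1$, $k=2$, $a+g_2=\Lambda=L$ with $L$ large, and $g_1+B_1=L$ with $B_1>0$ tiny. Choose $B_2$ so that $\{2\}$ is tight. All constraints hold, since those containing $1$ blow up as $B_1\to0$. But $B_2\approx\tfrac{\sqrt{17}-3}{2}L$, so $w\approx2.56L>(1+\sqrt2)L$, and hence $w^\nu>2L^\nu=(a+g_2)^\nu+\Lambda^\nu$.

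The correct split at $k$ keeps block $k$ on the \emph{left}. The problem on checkpoints $1,\dots,k-1$ with tail parameter $a+g_k$ has empty-set value $x_k$. Its empty-set optimality follows from
\[
\widetilde\Psi_\Lambda(S_L\cup\{k\};g,B)=\tfrac{(c-\epsilon)\Lambda}{B_k}\,\widetilde\Psi_{a+g_k}(S_L;g_{1:k-1},B_{1:k-1}).
\]
The induction hypothesis for that problem, combined with the scalar hypothesis for $(x_k,\Lambda,B_k)$, then closes the case.

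\paragraph{Reaching a tight singleton.} Your deformations move in directions in which the target inequality is monotone, and that part is correct. However, they also tighten other constraints, and nothing decides which constraint binds first. It may be a set with several checkpoints, or a singleton $\{r\}$ with $r<k$. Neither fits a merge from the right.

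The paper handles this as follows.
\begin{enumerate}
\item It maximizes $J(B)=W^\nu+\kappa\sum_iB_i^\nu$ over the compact set of $B$ for which $\varnothing$ is optimal. This suffices because the right-hand side does not depend on $B$.
\item Zero coordinates are removed by merging blocks.
\item At a maximizer some nonempty set is tight, since otherwise increasing any $B_i$ slightly would raise $J$ while staying feasible.
\item Log-submodularity then yields an inclusion-minimal tight set $T_0$ contained in every nonempty tight set.
\item Two selected sizes in $T_0$ enter every tight constraint only through their product. So the perturbations $u\mapsto u-t$, $v\mapsto v+\theta t$ (unequal sizes), or $b-t$, $b+t+t^2/(2b)$ (equal sizes), create slack while increasing $J$. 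This rules out $|T_0|\ge2$. It is also where $\nu\ge1/2$ is actually used, via $\sqrt r\ge r^{1-\nu}$ and $2\nu-1\ge0$.
\end{enumerate}
With $T_0=\{r\}$, one splits at $r$ into a left problem with tail parameter $a+g_r$ and a right problem with tail parameter $\Lambda$, and applies the induction hypothesis to both sides.
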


With the choice of $\kappa$ in \eqref{global:eq-kappa-choice} and the substitution in \eqref{global:eq-cost-move}, the correction term $\kappa\sum_{i=1}^k B_i^\nu$ becomes $\sum_{i=1}^k(\epsilon b_i/c)^\nu$.
This is the term that cancels the extra contributions from the shifted parameters in the proof of \cref{thm:weighted-recursion}.

Fix $g,\Lambda$ and define
\begin{equation}
  J(B):=W^\nu+\kappa\sum_{i=1}^kB_i^\nu,
  \qquad W=\Lambda+\sum_{i=1}^k(g_i+B_i),
  \label{global:eq-weighted-objective}
\end{equation}

\begin{proof}[Proof of \cref{lem:weighted-splitting}]
We induct on $k$.
For $k=0$, $W=\Lambda$ and the assertion is equality.
For $k\ge1$, hold $g$ and $\Lambda$ fixed and maximize $J(B)$ from \eqref{global:eq-weighted-objective} over $B_i\ge0$ for which the empty set minimizes $\widetilde\Psi_\Lambda$.
It suffices to prove the desired bound at a maximizer, since its right side is fixed.

\paragraph{Compactness and zero coordinates.}
The feasible set is nonempty because $B=0$ is feasible.
For the singleton constraint at $i$, define
\[
  x_i=a+\sum_{r\le i}g_r+\sum_{r<i}B_r,
  \qquad y_i=\Lambda+\sum_{r>i}(g_r+B_r).
\]
Its constraint is $WB_i\le(c-\epsilon) x_i y_i$.
Since $W\ge y_i>0$, we have $B_i\le(c-\epsilon) x_i$.
The latter bound depends only on earlier variables and fixed values of $g_i$, so applying it successively bounds every $B_i$.
For each nonempty $S$, multiplying $W\le\widetilde\Psi_\Lambda(S;g,B)$ by $\prod_{i\in S}B_i$, when all selected sizes are positive, gives a continuous polynomial inequality.
If a selected size is zero, the left side becomes zero and the right side is positive, agreeing with the $+\infty$ convention for $\widetilde\Psi_\Lambda$.
The feasible set is therefore closed and bounded, and the continuous function $J$ attains a maximum.

If a maximizing vector has $B_i=0$ with $i<k$, delete that checkpoint and replace $g_{i+1}$ by $g_i+g_{i+1}$.
This preserves the values of $\widetilde\Psi_\Lambda$ for all sets that omit $i$, and hence preserves empty-set optimality.
The induction hypothesis, followed by
\[
  (a+g_i+g_{i+1})^\nu
  \le(a+g_i)^\nu+(a+g_{i+1})^\nu,
\]
gives the desired bound.
If $B_k=0$, delete it and replace $\Lambda$ by $\Lambda+g_k$. Then use $(\Lambda+g_k)^\nu\le\Lambda^\nu+(a+g_k)^\nu$.
We may therefore assume that a maximizing vector has every $B_i>0$.

\paragraph{Finding a tight set with one checkpoint.}
At least one nonempty subset is tight.
Otherwise, a sufficiently small increase of any $B_i$ would preserve all constraints and strictly increase $J$.
The auxiliary expression $\widetilde\Psi_\Lambda$ has the same log-submodularity and strictness property as in \cref{global:lem-log-submodularity}. Its insertion ratio is
\[
  \frac{(c-\epsilon)(a+\ell)y}{B(\ell+B+y)},
\]
which is strictly increasing in $\ell$ and $y$ for $y\ge a$ and $B>0$.
The proof by successive insertions applies unchanged.
Choose an inclusion-minimal nonempty tight set $T_0$.
Every nonempty tight set intersects $T_0$, and that intersection is tight.
Minimality therefore implies that $T_0$ lies in every nonempty tight set.

Suppose first that $T_0$ contains two unequal sizes $u<v$, and put $r=v/u>1$.
Because both checkpoints are selected in every nonempty tight set, $\widetilde\Psi_\Lambda$ for each such set depends on them only through the reciprocal product $1/(uv)$.
Define
\[
  A_*:=\frac{v(W+u)}{u(W+v)},\qquad
  D_*:=\frac{W^{\nu-1}+\kappa u^{\nu-1}}
             {W^{\nu-1}+\kappa v^{\nu-1}}.
\]
The positive term $\Lambda$ gives $W>u+v$.
Since $\nu\ge1/2$ and $\kappa\ge0$,
\begin{equation*}
  A_*>\frac{r(r+2)}{2r+1}\ge\sqrt r
  \ge r^{1-\nu}\ge D_*\ge1.
\end{equation*}
For completeness, the first inequality follows because $r(z+1)/(z+r)$ increases in $z=W/u>1+r$.
The second follows, on writing $q=\sqrt r$, from $q^3-2q^2+2q-1=(q-1)(q^2-q+1)\ge0$.
The bound on $D_*$ follows by adding the same positive number $W^{\nu-1}$ to the numerator and denominator of a ratio at most $u^{\nu-1}/v^{\nu-1}=r^{1-\nu}$.

Choose $\theta\in(D_*,A_*)$ and perturb $u\mapsto u-t$, $v\mapsto v+\theta t$.
For every nonempty tight set, $\widetilde\Psi_\Lambda$ initially equals $W$.
The derivative of the difference between its value and the empty-set value is
\[
  W\left(\frac1u-\frac\theta v\right)-(\theta-1)>0,
\]
where the inequality is equivalent to $\theta<A_*$.
Meanwhile the objective has derivative
\[
  \nu\bigl[(\theta-1)W^{\nu-1}
     +\kappa(\theta v^{\nu-1}-u^{\nu-1})\bigr]>0,
\]
since $\theta>D_*$.
Positivity of the $B_i$ and the strictness of all other constraints persist for sufficiently small $t>0$.
This contradicts maximality.

If $T_0$ has at least two elements and all of its sizes are equal to $b$, perturb two of them to $b-t$ and $b+t+t^2/(2b)$.
Their product becomes $b^2-t^2/2-t^3/(2b)$, while $W$ increases by $t^2/(2b)$.
Thus every nonempty tight constraint gains slack
\[
  \frac{W-b}{2b^2}t^2+O(t^3)>0.
\]
The change in $J$ is
\[
  \frac{\nu t^2}{2b}
  \bigl[W^{\nu-1}+\kappa(2\nu-1)b^{\nu-1}\bigr]+O(t^3)>0.
\]
The leading coefficient is positive even for $\nu=1/2$ or $\kappa=0$, because $W^{\nu-1}>0$.
Again the remaining constraints retain their strict slack for small $t$.
This is a contradiction.
It follows that $T_0=\{r\}$ for some index $r$.

\paragraph{Splitting at that checkpoint.}
Write
\[
  x:=a+\sum_{i\le r}g_i+\sum_{i<r}B_i,
  \qquad y:=\Lambda+\sum_{i>r}(g_i+B_i).
\]
Tightness of $\{r\}$ gives $W=x+y+B_r-a$ and $WB_r=(c-\epsilon) xy$, with $x,y\ge a$.
Define the left and right auxiliary expressions by
\[
  \widetilde\Psi_L(S_L):=\widetilde\Psi_{a+g_r}(S_L;g_{1:r-1},B_{1:r-1}),
  \qquad
  \widetilde\Psi_R(S_R):=\widetilde\Psi_\Lambda(S_R;g_{r+1:k},B_{r+1:k}),
\]
where $S_L\subseteq\{1,\ldots,r-1\}$ and $S_R\subseteq\{r+1,\ldots,k\}$ use the original indices; in $\widetilde\Psi_R(S_R)$, each index is reduced by $r$.
For every such pair,
\begin{equation}
  \widetilde\Psi_\Lambda(S_L\cup\{r\}\cup S_R;g,B)
  =\frac{c-\epsilon}{B_r}\widetilde\Psi_L(S_L)\widetilde\Psi_R(S_R).
  \label{global:eq-pure-factorization}
\end{equation}
Their empty-set values are $\widetilde\Psi_L(\varnothing)=x$ and $\widetilde\Psi_R(\varnothing)=y$.
Taking $S_R=\varnothing$ in \eqref{global:eq-pure-factorization} and using optimality of the empty set in the full auxiliary problem gives $((c-\epsilon) y/B_r)\widetilde\Psi_L(S_L)\ge W=((c-\epsilon) y/B_r)x$.
Thus the empty set minimizes the left problem.
Taking $S_L=\varnothing$ gives the same conclusion for the right problem.

The induction hypothesis applies to both sides, including a side with no checkpoints, and yields
\begin{align*}
  x^\nu+\kappa\sum_{i<r}B_i^\nu
    &\le\sum_{i\le r}(a+g_i)^\nu,\\
  y^\nu+\kappa\sum_{i>r}B_i^\nu
    &\le\Lambda^\nu+\sum_{i>r}(a+g_i)^\nu.
\end{align*}
The scalar hypothesis gives $W^\nu+\kappa B_r^\nu\le x^\nu+y^\nu$.
Combining these three inequalities proves \eqref{global:eq-weighted-splitting}.
\end{proof}

With \cref{global:lem-gap-splitting} established, we return to the proof of \cref{thm:weighted-recursion}.

\begin{proof}[Proof of \cref{thm:weighted-recursion}]
We induct on $n$, simultaneously for all $\lambda\ge a$.
The case $n=0$ follows from $U_0(\lambda)=\lambda$.
For $n\ge1$, choose a maximizing schedule, put $W=U_n(\lambda)$, and choose a nonempty tight set with $k$ checkpoints.
By \cref{global:lem-gap-splitting}, we have \eqref{global:eq-gap-splitting}.
Let $m_i$ be the number of updates in $h^{(i)}$ for $1\le i\le k+1$.
Since $k\ge1$, each $m_i<n$, and $\sum_{i=1}^{k+1}m_i=n-k$.
The identities \eqref{global:eq-gap-identities} and the induction hypothesis at parameters $a+\epsilon b_i/c\ge a$ and $\lambda\ge a$ give
\begin{equation*}
\begin{aligned}
  (a+s_i+\epsilon b_i/c)^\nu&\le a^\nu m_i+(a+\epsilon b_i/c)^\nu
      &&(1\le i\le k),\\
  (\lambda+s_{k+1})^\nu&\le a^\nu m_{k+1}+\lambda^\nu.
\end{aligned}
\end{equation*}
Using $(a+\epsilon b_i/c)^\nu\le a^\nu+(\epsilon b_i/c)^\nu$ in \eqref{global:eq-gap-splitting} yields
\[
  W^\nu+\sum_{i=1}^k(\epsilon b_i/c)^\nu
  \le a^\nu n+\lambda^\nu+\sum_{i=1}^k(\epsilon b_i/c)^\nu.
\]
Cancelling the checkpoint terms proves \eqref{eq:cost-growth}.
\end{proof}

For $n\ge1$, let $p=1/\nu$ and take $\lambda=an^p$.
\cref{thm:weighted-recursion} gives $V_\lambda(h)\le U_n(\lambda)\le 2^p a n^p$.
Since $a\ge2$, we have $an^p-1\ge an^p/2$, and \cref{lem:cost-lower-bound} implies \eqref{eq:fixed-p-rate}.
It remains to verify the scalar hypothesis for $p$ arbitrarily close to $\psil$ and to quantify the dependence of $a$ on that choice.

\subsection{Proof of Lemma~\ref{global:lem-scalar-condition}}
\label{global:app-scalar}

For $x,y,B,w$ in the scalar hypothesis, set $u=x/w$, $v=y/w$, $p=1/\nu$, and $\tau=\epsilon^{1/p}$.
Here $0<u,v<1$, since $x,y\ge a$ and $B>0$.
The scalar relations imply
\begin{equation}
  u+v+(c-\epsilon) uv=1+\frac aw\ge1.
  \label{global:eq-normalized-scalar-constraint}
\end{equation}
Since $B/w=(c-\epsilon) uv$ and $\kappa=(\epsilon/(c-\epsilon))^{1/p}$, dividing \eqref{global:eq-scalar-hypothesis} by $w^\nu$ gives the target
\begin{equation}
  u^{1/p}+v^{1/p}-\tau(uv)^{1/p}\ge1.
  \label{global:eq-normalized-scalar-target}
\end{equation}
To see why the silver inequality appears, consider the ideal limiting scalar problem $c=1$, $\epsilon=0$.
Writing $X=u^{1/p}$ and $Y=v^{1/p}$, a violation would have $Y<1-X$.
The expression $X^p+Y^p+X^pY^p$ increases with $Y$, and its boundary value is
\[
  X^p+(1-X)^p+[X(1-X)]^p.
\]
If this expression is at most $1$, the target inequality \eqref{global:eq-normalized-scalar-target} must hold. Otherwise, \eqref{global:eq-normalized-scalar-constraint} would be violated.
By \cref{global:lem-silver-inequality}, $X^p+(1-X)^p+[X(1-X)]^p\le1$ at $p=\psil$.

\begin{lemma}[Silver inequality]
\label{global:lem-silver-inequality}
For every \(z\in[0,1]\),
\begin{equation*}
 z^{\psil}+(1-z)^{\psil}+[z(1-z)]^{\psil}\le 1.
\end{equation*}
Equality holds at \(z=0,1/2,1\).
\end{lemma}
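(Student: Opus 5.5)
By the symmetry $z\leftrightarrow 1-z$ we may restrict to $z\in[0,1/2]$. Write $p=\psil$ and
\[
g(z)=z^{p}+(1-z)^{p}+[z(1-z)]^{p}-1 .
\]
The goal is $g\le0$ on $[0,1/2]$. The equality cases are direct. At $z=0$ we get $g=0+1+0-1=0$. At $z=1/2$ we get $2\cdot2^{-p}+4^{-p}=2/\rho+1/\rho^2$, since $2^{p}=\rho=1+\sqrt2$, and $\rho^2=2\rho+1$ makes this equal to $1$.

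The plan follows the concavity argument of \citet[Lemma~C.4]{JungChoYun2026}. Substitute $t=z(1-z)\in[0,1/4]$. On $[0,1/2]$ the map $z\mapsto t$ is an increasing bijection, and
\[
z=\tfrac12-\sqrt{\tfrac14-t},\qquad 1-z=\tfrac12+\sqrt{\tfrac14-t}.
\]
Set $\phi(t):=z^p+(1-z)^p+t^p$. Then $\phi(0)=\phi(1/4)=1$, so it suffices to show $\phi$ is convex in $t$ on $[0,1/4]$. A convex function lies below the chord joining its endpoint values, which gives $\phi\le1$.

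The term $t^p$ is convex because $p>1$. The remaining part is
\[
\chi(t)=(\tfrac12-r)^p+(\tfrac12+r)^p,\qquad r=\sqrt{\tfrac14-t}.
\]
To handle it, expand $\chi$ as an even function of $r$:
\[
\chi=2^{1-p}\sum_{k\ge0}\binom{p}{2k}(2r)^{2k}.
\]
This expansion converges for $0\le r\le1/2$. Since $r^2=\tfrac14-t$ is affine in $t$, convexity of $\chi$ in $t$ is equivalent to convexity in $s=r^2$. This in turn amounts to the second $s$-derivative of $\sum_k\binom{p}{2k}4^k s^k$ being nonnegative. For $1<p<2$ we have $\binom p2>0$ but $\binom p4<0$, so the individual terms have mixed signs. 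That makes this route risky.

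If the series route fails, the fallback is to compute $d^2\chi/ds^2$ directly. Using $dr/ds=1/(2r)$, this derivative is
\[
\frac{1}{4r^2}\Bigl(\chi_{rr}-\frac{\chi_r}{r}\Bigr).
\]
Its sign is the sign of
\[
(p-1)\bigl[(\tfrac12+r)^{p-2}+(\tfrac12-r)^{p-2}\bigr]-\frac{(\tfrac12+r)^{p-1}-(\tfrac12-r)^{p-1}}{r}.
\]
Write the difference quotient as the integral $\int_{-1}^{1}(p-1)(\tfrac12+\sigma r)^{p-2}\,d\sigma$. Then the condition becomes a trapezoid-versus-average comparison for the function $\sigma\mapsto(\tfrac12+\sigma r)^{p-2}$. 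This function is convex because $p-2<0$. For a convex function the endpoint sum dominates the integral over $[-1,1]$, so the expression is $\ge0$ and $\chi$ is convex in $s$.

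The main obstacle is exactly this sign check for $\chi$, together with the $r\to0$ boundary behaviour. The trapezoid inequality for convex functions should settle it, and continuity then covers $r=0$. Combining the convexity of $\chi$ with the convexity of $t^p$ gives convexity of $\phi$. The chord argument then yields $\phi\le1$ on $[0,1/4]$, with equality at $z=0$ and $z=1/2$. By symmetry this gives equality also at $z=1$.
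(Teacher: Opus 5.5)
Your proof is correct, but it takes a different route from the paper's. The paper works directly in $z$. It divides $F'(z)$ by $\psil[z(1-z)]^{\alpha}$ with $\alpha=\psil-1$, which gives $\phi(z)=(1-z)^{-\alpha}-z^{-\alpha}+1-2z$. It then shows that $\phi$ is concave on $(0,1/2)$, with $\phi(0+)=-\infty$, $\phi(1/2)=0$ and $\phi'(1/2)=\alpha 2^{\alpha+2}-2<0$; this last step uses the numerical bound $\alpha<1/3$. It concludes that the left side first decreases and then increases on $[0,1/2]$, so its maximum is an endpoint value.

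You instead reparametrize by $t=z(1-z)$ and prove convexity in $t$. Your Hermite--Hadamard step is valid: $\sigma\mapsto(\tfrac12+\sigma r)^{p-2}$ is convex because $p<2$, so its integral over $[-1,1]$ is at most the sum of its endpoint values. Your route is more structural. It uses nothing about $\psil$ except the endpoint identity $\rho^2=2\rho+1$. It shows that for every $p\in[1,2]$ the maximum over $[0,1]$ is exactly $\max\{1,2^{1-p}+4^{-p}\}$, so the inequality holds for all $p\in[\psil,2]$ at once. Strict convexity of $t^p$ also gives strict inequality for $z\notin\{0,1/2,1\}$. The paper's route stays in the original variable with one-variable calculus, but it leans on the specific estimate $\alpha<1/3$.

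Two small corrections. First, your claim $\binom p4<0$ is false. For $1<p<2$ and $k\ge1$, the numerator of $\binom{p}{2k}$ has exactly $2k-2$ negative factors $p-2,\ldots,p-2k+1$, so it is positive. Every coefficient of $\chi=2^{1-p}\sum_k\binom{p}{2k}4^ks^k$ is therefore positive, and the series route you abandoned gives convexity in $s$ on $[0,1/4)$ immediately. Second, your second-derivative formula is valid only for $0<r<1/2$. The endpoint $r=1/2$ (that is, $z=0$), where $(\tfrac12-r)^{p-2}$ blows up, must also be covered by the continuity argument, not only $r=0$.
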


\begin{proof}[Proof of \cref{global:lem-silver-inequality}]
Write $\alpha=\psil-1\in(0,1/3)$, and denote the left side of \eqref{global:eq-silver-inequality} by $F(z)$.
For $0<z<1/2$,
\[
  \frac{F'(z)}{\psil[z(1-z)]^\alpha}
  =\phi(z):=(1-z)^{-\alpha}-z^{-\alpha}+1-2z.
\]
Moreover,
\[
  \phi''(z)=\alpha(\alpha+1)
  \bigl[(1-z)^{-\alpha-2}-z^{-\alpha-2}\bigr]<0.
\]
The endpoint values satisfy
\[
  \phi(0+)=-\infty,\qquad \phi'(0+)=+\infty,\qquad
  \phi(1/2)=0,
\]
and
\[
  \phi'(1/2)=\alpha 2^{\alpha+2}-2
  <\frac13 2^{7/3}-2<0.
\]
Thus $\phi'$ has exactly one zero in $(0,1/2)$, and $\phi$ first increases and then decreases.
Since $\phi'(1/2)<0$, $\phi$ is positive immediately to the left of $1/2$.
It therefore has exactly one zero in $(0,1/2)$, with negative sign before that zero and positive sign after it.
Consequently, $F$ first decreases and then increases on $[0,1/2]$, and its maximum occurs at an endpoint.
With $\rho=1+\sqrt2$, those endpoint values are
\[
  F(0)=1,\qquad F(1/2)=\frac2\rho+\frac1{\rho^2}=1.
\]
Symmetry under $z\mapsto1-z$ proves the result.
\end{proof}

Fix $c,\epsilon$ with $0<c-\epsilon\le2$.
For $p\in[\psil,3/2]$, set
\[
 G_p(z):=z^p+(1-z)^p+(c-\epsilon)[z(1-z)]^p.
\]
Writing \(t_+=\max\{t,0\}\), we have
\begin{equation}
\label{global:eq-silver-slack}
 G_p(z) \le 1-\bigl[p-\psil-(c-\epsilon-1)_+\bigr]z(1-z), \qquad z\in[0,1].
\end{equation}

\begin{proof}[Proof of the slack estimate \eqref{global:eq-silver-slack}]
For $0<z<1$, the inequalities $-\log z\ge1-z$ and $-\log(1-z)\ge z$ give
\begin{align*}
  -\partial_pG_p(z)
  &\ge z^p(1-z)+(1-z)^p z\\
  &=z(1-z)\bigl[z^{p-1}+(1-z)^{p-1}\bigr]
  \ge z(1-z).
\end{align*}
The last inequality uses $0<p-1\le1/2$.
By \cref{global:lem-silver-inequality} and $[z(1-z)]^{\psil}\le z(1-z)$,
\[
  G_{\psil}(z)\le1+(c-\epsilon-1)_+z(1-z).
\]
Integrating the derivative bound from $\psil$ to $p$ proves the estimate.
Continuity gives the endpoint cases.
\end{proof}

\begin{proof}[Proof of \cref{global:lem-scalar-condition}]
Let $x,y\ge a$ and $B>0$ satisfy
\[
  w=x+y+B-a,\qquad wB=(c-\epsilon) xy.
\]
Set $u=x/w$ and $v=y/w$.
Since $a>0$, $x,y\ge a$, and $B>0$, we have $0<u,v<1$.
The two relations imply \eqref{global:eq-normalized-scalar-constraint}.
Dividing $w^\nu+\kappa B^\nu\le x^\nu+y^\nu$ by $w^\nu$, and using $B/w=(c-\epsilon) uv$, reduces the desired conclusion to \eqref{global:eq-normalized-scalar-target}.

Suppose this inequality fails.
Put $X=u^{1/p}$ and $Y=v^{1/p}$.
Then $0<X,Y<1$ and
\[
  Y<\frac{1-X}{1-\tau X}=:A.
\]
Here $0<A<1$.
Since the derivative of $t^p$ on $[0,1]$ is at most $p$, the mean value theorem gives
\begin{align*}
  \bigl[A^p-(1-X)^p\bigr](1+(c-\epsilon) X^p)
  &\le p\bigl[A-(1-X)\bigr](1+c-\epsilon)\\
  &\le\frac{p\tau(1+c-\epsilon)}{1-\tau}X(1-X)
  \le6\tau X(1-X).
\end{align*}
The final step uses $p\le3/2$, $(c-\epsilon)\le2$, and $\tau\le1/10$.
Consequently,
\begin{align*}
  u+v+(c-\epsilon) uv
  &<X^p+A^p+(c-\epsilon) X^pA^p\\
  &\le G_p(X)+6\tau X(1-X)\le1,
\end{align*}
where the last inequality follows from \eqref{global:eq-silver-slack} and \eqref{global:eq-scalar-sufficient}.
This contradicts \eqref{global:eq-normalized-scalar-constraint}.
\end{proof}

\subsection[Proof of (\ref{eq:parameter-growth})]{Proof of \eqref{eq:parameter-growth}}
\label{global:app-parameters}
\label{global:app-nonanytime}

Fix \(0<\eta\le1/10\) and use the parameter choice \eqref{global:eq-parameter-choice}.
These choices give \(p\in[\psil,3/2]\), \(0<\epsilon<1/2\), and \(\epsilon^{1/p}=\eta/16\le1/10\).
Also, \(\epsilon^2\le(\eta/16)^2\le\eta/6\), so
\[
 c^2=1+\frac{\eta}{2}+\epsilon^2 \le1+\frac{2\eta}{3} \le\left(1+\frac{\eta}{3}\right)^2.
\]
Consequently, \(0<(c-\epsilon)\le c\le1+\eta/3<2\), and
\[
 (c-\epsilon-1)_++6\epsilon^{1/p} \le\frac{\eta}{3}+\frac{3\eta}{8} =\frac{17\eta}{24}<\eta=p-\psil.
\]
\cref{global:lem-scalar-condition} therefore verifies \eqref{global:eq-scalar-hypothesis} with the coefficient \(\kappa=(\epsilon/(c-\epsilon))^{1/p}\) prescribed in \eqref{global:eq-kappa-choice}.
The bound in \cref{thm:weighted-recursion}, together with \eqref{eq:fixed-p-rate}, now gives the lower bound with exponent \(\psil+\eta\).
Taking \(\eta=1/10\) already gives \(\mathcal R_n(h)=\Omega(n^{-(p_{\mathrm{sil}}+1/10)})\), which also implies \(\mathcal R_n(h)=\Omega(n^{-q})\) for every \(q>p_{\mathrm{sil}}+1/10\).

To quantify the dependence on \(\eta\), take the constants from the transfer construction to be
\begin{equation*}
 \begin{gathered}
 R:=\sqrt{\epsilon^2+(1+\gamma)^2},\qquad H:=\left[\frac{c(1+\gamma+R)}{\epsilon(\gamma+R)}\right]^{R/\gamma}-1,\\
 C_0:=\epsilon+\epsilon^{-1}+H/\epsilon,\qquad B_0:=2(1+\epsilon^{-2}),\\
 a:=\max\{2,2C_0/c,2B_0+2\}.
 \end{gathered}
\end{equation*}
In particular, \(a\ge2B_0+2\), as required.
Under \eqref{global:eq-parameter-choice}, the quantities \(c\) and \(R\) are bounded above and bounded away from zero by absolute constants, while
\[
 \log(1/\epsilon)=p\log(16/\eta)=O(\log(1/\eta)), \qquad \frac{R}{\gamma}=O(1/\eta).
\]
The logarithm of the base defining \(1+H\) is \(O(\log(1/\eta))\).
Hence
\[
 \log(1+H) =\frac{R}{\gamma} \log\!\left(\frac{c(1+\gamma+R)}{\epsilon(\gamma+R)}\right) =O\!\left(\eta^{-1}\log(1/\eta)\right).
\]
Since \(C_0=\epsilon+(1+H)/\epsilon\) and \(\log B_0=O(\log(1/\eta))\), the choice of \(a\) gives \eqref{eq:parameter-growth} with an absolute implied constant. 

\section{Proofs for the Anytime Lower Bound}
\label{app:anytime}

\subsection{Proof of Lemma~\ref{any:lem:record-sum}}
\label{any:app-record-sum}

We prove \cref{any:lem:record-sum}.
Fix $p\in(\psil,\psil+1/10]$ and use the parameters $a,c,\epsilon,B_0$ from \cref{global:app-parameters}, with $\eta=p-\psil$.
Recall that $\nu=1/p$ and $0<\epsilon/c<1$.
We may take
\[
    r_{0,p}:=\min\left\{\frac18,\frac{1}{16c^2}\right\}.
\]
At a record time satisfying the lemma's assumptions, write
\[
    M:=M_n=h_n,
    \qquad R:=r_n,
    \qquad \xi:=H_{n-1},
    \qquad \Lambda:=a+\epsilon M/c,
    \qquad V:=V_\Lambda(\xi).
\]

\paragraph{A lower bound on $V$.}
The deletion argument in the proof of \cref{lem:cost-lower-bound} shows that deleting any selected step of size at most $a/2$ decreases $\Psi_\Lambda$.
Thus a set attaining $V$ selects only steps larger than $a/2\ge B_0$.
Append the final step $M$ to this set.
Let $s$ be the sum of the stepsizes after the last selected step in the prefix, or of all stepsizes in the prefix if no step is selected. The factor for the appended step becomes $c(\Lambda+s)/M$, since $c(a+s)+\epsilon M=c(\Lambda+s)$.
There are no updates after the selected final step, so \cref{thm:transfer} gives
\begin{equation}
    R\ge\frac{M^2}{4c^2V^2},
    \qquad
    V\ge\frac{M}{2c\sqrt R}\ge2M\ge2\epsilon M/c.
    \label{any:eq:prefix-cost-lower}
\end{equation}

\paragraph{Counting steps above a threshold.}
For $0<u\le M$, let
\[
    N_\xi(u):=\bigl|\{t<n:h_t>u\}\bigr|=k.
\]
List these $k$ steps in the order they occur as $b_1,\ldots,b_k$, and let $\xi^{(0)},\ldots,\xi^{(k)}$ be the blocks between them, including the initial and final blocks.
Write $m_j$ for the length of $\xi^{(j)}$.
Then $\sum_{j=0}^k(m_j+1)=n$.

Select $b_1,\ldots,b_k$ together with checkpoint sets attaining $V_{a+\epsilon b_i/c}(\xi^{(i-1)})$ for $1\le i\le k$ and $V_\Lambda(\xi^{(k)})$ for the final block.
Evaluating $\Psi_\Lambda$ at this set gives
\begin{equation}
    V_{a+\epsilon M/c}(\xi)
    \le V_{a+\epsilon M/c}(\xi^{(k)})
    \prod_{i=1}^k\frac{c}{b_i}
    V_{a+\epsilon b_i/c}(\xi^{(i-1)}).
    \label{any:eq:block-factorization}
\end{equation}
Indeed, multiplying the preceding block's final factor $a+s+\epsilon b_i/c$ by $c/b_i$ gives $\epsilon+c(a+s)/b_i$, the factor for $b_i$.
This factorization uses only the definition of $\Psi_\lambda$, so the selected $b_i$ need not exceed the threshold $B_0$ required by \cref{thm:transfer}.

Applying \eqref{eq:cost-growth} to each block and using $(a+\epsilon b/c)^\nu\le a^\nu+(\epsilon b/c)^\nu$ gives
\begin{align*}
    \frac{c}{b_i}V_{a+\epsilon b_i/c}(\xi^{(i-1)})
    &\le \epsilon\left(
        1+\frac{(ac/\epsilon)^\nu(m_{i-1}+1)}{b_i^\nu}
    \right)^{1/\nu},
    \\
    V_{a+\epsilon M/c}(\xi^{(k)})
    &\le\frac{\epsilon M}{c}\left(
        1+\frac{(ac/\epsilon)^\nu(m_k+1)}{M^\nu}
    \right)^{1/\nu}.
\end{align*}
Since $b_i>u$, $M\ge u$, and $1+z\le e^z$, substituting these estimates into \eqref{any:eq:block-factorization} yields
\[
    \frac{cV}{\epsilon M}
    \le\epsilon^k
    \exp\!\left\{
        \frac{(ac/\epsilon)^\nu}{\nu}\,n u^{-\nu}
    \right\}.
\]
Together with \eqref{any:eq:prefix-cost-lower}, this implies
\begin{equation*}
    N_\xi(u)
    \le\frac{(ac/\epsilon)^\nu}{\nu\log(1/\epsilon)}
    \,n u^{-\nu}.
\end{equation*}
Integrating this bound over the threshold gives
\begin{equation}
    \sum_{t=1}^{n-1}h_t
    =\int_0^M N_\xi(u)\,du
    \le\frac{(ac/\epsilon)^\nu}{\nu(1-\nu)\log(1/\epsilon)}
    \,nM^{1-\nu}.
    \label{any:eq:prefix-sum}
\end{equation}

\paragraph{Including the last step.}
By \eqref{any:eq:prefix-cost-lower} and \eqref{eq:cost-growth},
\[
    (2c\sqrt R)^{-\nu}M^\nu
    \le V^\nu
    \le a^\nu(n-1)+(a+\epsilon M/c)^\nu
    \le a^\nu n+(\epsilon M/c)^\nu.
\]
Our choice of $r_{0,p}$ ensures $(2c\sqrt R)^{-\nu}\ge2^\nu$, and hence
\[
    M^\nu\le\frac{a^\nu}{2^\nu-(\epsilon/c)^\nu}\,n.
\]
Adding $M\le a^\nu nM^{1-\nu}/(2^\nu-(\epsilon/c)^\nu)$ to \eqref{any:eq:prefix-sum} proves the lemma with
\begin{equation*}
    C_p=\frac{(ac/\epsilon)^\nu}{\nu(1-\nu)\log(1/\epsilon)}
        +\frac{a^\nu}{2^\nu-(\epsilon/c)^\nu}.
\end{equation*}
\hfill$\square$

The explicit choices of $r_{0,p}$ and $C_p$ also justify the dependence on $p$ used in \eqref{any:eq:parameter-growth}.
For $\delta=p-\psil\downarrow0$, both $\nu$ and $1-\nu$ stay bounded away from zero, while $c$ stays bounded and $\log(c/\epsilon)=O(\log(1/\delta))$.
Thus \eqref{eq:parameter-growth} gives $\log C_p=O(\delta^{-1}\log(1/\delta))$.
The additional constants in the derivation from \eqref{any:eq:final-step-lower} to \eqref{any:eq:fixed-p-rate} are bounded by fixed powers of $a,c,B_0$ and $C_p$, which gives the stated bound on $\log(1/c_p)$.

\subsection{Proof of Theorem~\ref{thm:anytime}}
\label{any:app-main-proof}
In this proof, $K_p$ denotes a positive constant depending only on $p$, whose value may change between occurrences.
Selecting no checkpoints in \cref{thm:transfer} gives
\begin{equation}
    r_n\ge\frac{1}{4(1+S_n)}.
    \label{any:eq:total-sum-lower}
\end{equation}
If $H$ is bounded, then $S_n=O(n)$, and this is already stronger than the desired bound.
We may therefore assume that $H$ is unbounded and consider its infinitely many strict record times $n\ge2$, where $h_n>M_{n-1}$.

At a record time with $M_n\ge B_0$, selecting only the final step in \cref{thm:transfer} gives
\begin{equation}
    r_n\ge\frac14 \left[\frac{M_n}{c(a+S_{n-1})+\epsilon M_n}\right]^2.
    \label{any:eq:final-step-lower}
\end{equation}
For $r_n\le r_{0,p}$, after decreasing $r_{0,p}$ if necessary, we can move the term containing $\epsilon M_n$ to the left and use $a+S_{n-1}\le(1+a/B_0)S_n$ to obtain $M_n\le K_p S_n\sqrt{r_n}$.
Also, \eqref{any:eq:total-sum-lower} gives $S_n\ge1/(8r_n)$ when $r_n\le1/8$.
Combining these estimates with \cref{any:lem:record-sum} yields
\[
    S_n^\nu\le K_p n r_n^{(1-\nu)/2}, \qquad 1\le K_p n r_n^{(1+\nu)/2}.
\]
Consequently, at every such record time,
\begin{equation}
    r_n\ge c_p n^{-\beta(p)}, \qquad \beta(p):=\frac{2p}{1+p}=\frac{2}{1+\nu}.
    \label{any:eq:fixed-p-rate}
\end{equation}

It remains to let $p$ approach $\psil$.
Write $\delta:=p-\psil$.
The parameter bound \eqref{eq:parameter-growth} and the constants in the proof of \cref{any:lem:record-sum} imply
\begin{equation}
\begin{aligned}
    \log(1/c_p)+\log(1/r_{0,p}) &=O\!\left(\delta^{-1}\log(1/\delta)\right),\\
    \log B_0&=O\!\left(\log(1/\delta)\right), \qquad \beta(p)=\pany+O(\delta).
\end{aligned}
\label{any:eq:parameter-growth}
\end{equation}
At each sufficiently large strict record time, choose
\[
    \delta=\sqrt{\frac{\log\log n}{\log n}}.
\]
The hard function may depend on $n$, since $\mathcal R_n(H_n)$ takes a separate supremum at each horizon.
If $M_n<B_0$, then $S_n\le nB_0$, and \eqref{any:eq:total-sum-lower} gives $r_n\ge n^{-1-o(1)}$.
If $r_n>r_{0,p}$, then \eqref{any:eq:parameter-growth} gives $r_n\ge n^{-o(1)}$.
Both cases are stronger than the claimed bound.
Otherwise, \eqref{any:eq:fixed-p-rate} and \eqref{any:eq:parameter-growth} give
\[
    r_n\ge n^{-\pany} \exp\!\left\{-C\left(\delta\log n+\delta^{-1}\log(1/\delta)\right)\right\} \ge n^{-\left(\pany+C'\sqrt{\log\log n/\log n}\right)}.
\]
There are infinitely many strict record times, which proves \cref{thm:anytime}.

\end{document}